\newtheorem{theorem}{Theorem}[section]
\newtheorem{lemma}[theorem]{Lemma}
\newtheorem{corollary}[theorem]{Corollary}
\newtheorem{proposition}[theorem]{Proposition}
\newtheorem{assumption}[theorem]{Assumption}
\theoremstyle{definition}
\newtheorem{example}[theorem]{Example}
\theoremstyle{remark}
\newtheorem{remark}[theorem]{Remark}
\numberwithin{equation}{section}
\newcommand{\eps}{\varepsilon}
\newcommand{\EXCLUDE}[1]{}
\newcommand{\be}{\begin{equation}}
\newcommand{\ee}{\end{equation}}
\newcommand{\bea}{\begin{eqnarray}}
\newcommand{\no}{\nonumber}
\newcommand{\eea}{\end{eqnarray}}
\newcommand\bmodif{\begin{modif}}
	\newcommand\emodif{\end{modif}}
\renewcommand{\qed}{\hfill $\Box$}
\newcommand{\sE}{\mathbb {E} } 
\newcommand{\E}{\mathbb {E} }
\newcommand{\cov}{\mathrm{Cov}} 
\newcommand{\Var}[1]{{\rm Var}\!\left(#1\right) }
\newcommand{\var}{{\rm Var}}
\newcommand{\1}[1]{\mathsf{1}\!\left[\,#1\,\right] }
\newcommand{\remove}[1]{}
\def\0{{\bf 0}}
\def\N{\mathbb{N}}
\def\mR{\mathbb{R}}
\def\mC{\mathbb{C}}
\def\mZ{\mathbb{Z}}
\def\mS{\mathbb{S}}
\def\X{{\cal X }}
\def\bK{{\bf K}}
\def\bkappa{{\boldsymbol \kappa}}
\newcommand{\md}{\mathrm{d}}
\newcommand{\cX}{{\mathcal X}}
\newcommand{\cM}{\mathcal{M} }
\newcommand{\cN}{{\mathcal N}}
\newcommand{\cL}{\mathcal{L}}
\newcommand{\cH}{{\mathcal H}}
\def\R{\mathbb{R}}
\def\N{\mathbb{N}}
\def\Z{\mathbb{Z}}
\def\P{\mathcal P}
\def\1{\mathbf{1}}
\def\C{{\cal C}}
\def\Li{\mbox{Li}_2}
\def\<{{\langle}}
\def\>{{\rangle}}
\newcommand{\dy}[1]{\textcolor{magenta}{#1}}
\title{Stationary random measures :  Covariance asymptotics,  variance bounds and central limit theorems}
\author{Manjunath Krishnapur\footnote{Department of Mathematics, Indian Institute of Science, Bangalore. Email: manju@iisc.ac.in} \, and \, 
D. Yogeshwaran\footnote{Theoretical Statistics and Mathematics Unit, Indian Statistical Institute, Bangalore. Email: d.yogesh@isibang.ac.in}}
\begin{document}
\maketitle


%
\begin{center} \textit{In fond memory of KRP, often a teacher, always a student.}
\end{center}
\abstract We consider covariance asymptotics for linear statistics of general stationary random measures in terms of their truncated pair correlation measure. We give exact infinite series-expansion formulas for covariance of smooth statistics of random measures involving higher-order integrals of the truncated correlation measures and higher-order derivatives of the test functions and also equivalently in terms of their Fourier transforms. Exploiting this, we describe possible covariance and variance asymptotics for Sobolev and indicator statistics. In the smooth case, we show that that order of variance asymptotics drops by even powers and give a simple example of random measure exhibiting such a variance reduction. In the case of indicator statistics of $C^1$-smooth sets, we derive covariance asymptotics at surface-order scale with the limiting constant depending on intersection of the boundaries of the two sets. We complement this by a lower bound for random measures with a non-trivial atomic part. Restricting to simple point processes, we prove a central limit theorem for Sobolev and H\"{o}lder continuous statistics of simple point processes satifying a certain integral identity for higher-order truncated correlation functions. \\ 

\noindent \textsc{Keywords:} Random measures, covariance asymptotics, variance bounds, correlation functions, linear statistics, central limit theorem. \\

\noindent \textsc{MSC2020 Classification:} 60G55,  	
60D05 ; 
60F05, 
60G57. 

\tableofcontents

\section{Introduction}
\label{s:intro}

Variance asymptotics and central limit theorems of stationary random measures are topics of frequent investigations in stochastic geometry. The most well-studied cases are random measures driven by Poisson or Gaussian measures and these have seen enormous progress in the last decade via analysis of their chaos expansions and Malliavin calculus; see the website \cite{MalliavinSteinwebpage} and the surveys \cite{Rossi2019,Wigman2023,Last2017,Peccati2016}. Beyond these, the studies often restrict to specific classes such as perturbed lattices, determinantal process, Gibbs measures, zeros of random analytic functions or purely atomic random measures generated by them; see for example \cite{Forrester1999,soshnikovclt,Nazarov12,NazarovSodin11,BYY2019,Fenzl2019,
Adhikari2020,Chen2021,Dinh2021,Yakir21,Cong2023,Feng2023,Levi2023,
Michelen2023}. Even when considering general point processes, comprehensive description of variance asymptotics is lacking beyond number counts \cite{Lebowitz1983,Martin80,Nazarov12,Adhikari2020,Torquato2021}. In this article, we consider covariance asymptotics for linear statistics of stationary random measures in generality whereas we restrict to simple point processes for the central limit theorems. 

As a generic disclaimer, special cases of our results and some of the ideas were previously carried out for particular random measures, mainly point processes and often the derivations were stated and proved for the specific  examples at hand. An exposition of such derivations and ideas in wider generality is one of the aims of the article. Apart from investigating covariance asymptotics in general for stationary random measures, we only assume that the correlation measure has finite variation, and not necessarily have a correlation density. This unifies and generalizes various known results in the literature and we provide more specific comparisons to the literature in discussions preceding or succeeding the corresponding results. 

Let us now spill more details on the questions considered in this paper,  and our contributions. The terms and notation used here are explained in Section~\ref{sec:prelims}. Consider a stationary random measure $\X$ on $\R^d$ with intensity $\lambda$ and $f: \R^d \to \R$ belongs to a `nice' class of functions, say smooth functions, Sobolev functions, H\"{o}lder continuous functions or indicator functions. For $L \geq 1$, let $\X_L(\cdot) = \X(L \, \cdot)$ denote the scaled random measure. We are interested in limit theory for $\X_L(f) := \int f(x) \X_L(\md x)$ for appropriate classes of functions. It is straightfoward to see from Campbell's formula \eqref{e:campbell} that $\sE[\X_L(f)] = L^{d} \lambda \int f$ if $f$ is integrable. However, the asymptotics of $\Var{\X_L(f)}$ and more generally of $\cov(\X_L(f),\X_L(g))$ are more interesting and depend  not only upon $\cX$ but also upon the smoothness of $f,g$. We can also ask central limit theorems for suitably scaled versions of $\X_L(f)$. Apart from their role in central limit theorems, variance asymptotics of random measures are also important in understanding the very interesting phenomenon of  {\em rigidity}, in which the  complete  knowledge of a process on a subset may fully determine certain features of the process on the complement, see~\cite{Ghosh2015,Ghosh2017,Ghosh2017a,LRY2024}. Covariance asymptotics are necessary for functional central limit theorems.

Even specializing to the case when $\X$ is a simple point process, variance or covariance asymptotics of linear statistics are not fully understood. Often the two key components in second-order analysis of stationary random measures are its truncated pair correlation measure $\bK$ and intensity of its atomic part $\lambda_D$. Integrability of $\bK$ guarantees that $\Var{\X_L(f)} = O(L^d)$ for square-integrable $f$ and these conditions also often yield covariance asymptotics at the volume-order scale (i.e., $L^d$) with positivity of the limiting variance guaranteed if $\lambda_D + \int \bK \neq 0$. However, for many point processes (for example, perturbed lattices, detereminantal point processes or zeros of the Gaussian entire function), the above condition fails and  the point process or random measure is said to be {\em hyperuniform} (or {\em super-homogeneous}) \cite{Gabrielli2002,Torquato2003,Torquato2018,Coste2021}. A comprehensive description of covariance and variance asymptotics is lacking. With this background, we now list the contributions of this article. 
\begin{enumerate}
\item We give exact formulas for covariance of smooth statistics as  series expansions in spatial and frequency domains, in Lemmas \ref{l:exactcov} and \ref{l:exactcovfourier} respectively. The terms of the series involve higher-order integrals of the truncated correlation measure $\bK$ and higher-order derivatives of the test functions $f,g$ or higher-order derivatives of its Fourier-transform $\widehat{\bK}$ and higher-order integrals of their Fourier-transforms $\hat{f}, \hat{g}$. These exact formulas require strong assumptions on the test statistics. But they are valuable for their clean formulations and further, they can be modified easily for more general statistics, by truncating the series to  two orders which is the way it is often done in the literature; for example see \cite{Adhikari2020,Mastrilli2024,Yakir21,Nazarov12}.

\item These exact formulas lead to a comprehensive description of covariance asymptotics for Sobolev statistics; see Proposition~\ref{l:covasymstatfindiffsobolev}.  An interesting phenomenon is that the possible order of variance (and covariances) of smooth statistics skips alternate powers, i.e., if $\Var{\X_L(f)} = o(L^d)$ then $\Var{\X_L(f)} = O(L^{d-2})$. Further, if $\Var{\X_L(f)} = o(L^{d-2})$ then $\Var{\X_L(f)} = O(L^{d-4})$ and so on. We provide an example of a random measure achieving such variance decay rates and Gabrielli, Joyce and Torquato \cite{Gabrielli2008} and  Lachièze-Rey \cite{LRY2024} provide examples of point processes having arbitrary polynomial decay of variances. For Sobolev statistics of order $k$, such a phenenomenon holds up to order $d-2k$. In special cases such as the Ginibre process ($d=2, k=1$) and zeros of Gaussian entire functions ($d = k = 2$) these were determined in \cite{Rider2007} and \cite{Forrester1999} respectively.  

\item At the other end of the spectrum, we give explicit covariance asymptotics for indicator statistics of $C^1$-smooth domains at scale $L^{d-1}$ (Theorem \ref{t:covindfn}) for hyperuniform $\cX$  and also surface-order variance lower bounds (i.e., $\Var{\X_L(1_{W})} = \Omega(L^{d-1})$ for $W\subseteq \R^d$ with positive volume) for indicator statistics of random measures with a non-trivial atomic component (Proposition \ref{prop:var_lb_stat_rand_meas}). The limiting covariance interestingly depends on the shared boundary of the two domains, and  in particular is zero if there is no shared boundary even if one domain is contained in the other! Such a phenomenon for the case of boxes was discovered by Lebowitz \cite{Lebowitz1983} and by Buckley and Sodin \cite{Buckley2017} for more general domains and the point process of zeros of the planar GEF (but they remark that this should hold for hyperuniform point processes).  Another result of this flavour is the one-dimensional result of K. Wieand~\cite{Wieand98} (see also \cite[Theorem 6.1]{Diaconis2001}) for eigenvalues of Haar distributed unitary random matrices.  The variance lower bounds for indicator statistics were known mainly for stationary point processes; see \cite{Beck1987, Nazarov12}.

\item Finally, we state a central limit theorem for Sobolev and H\"older statistics of stationary point processes satisfying a certain integral identity for higher-order truncated correlations; see Theorems \ref{thm:cltsmooth} and \ref{thm:cltholder}. Specifically, if the variance of Sobolev statistics is of order $L^{d-2k}$, we give additional conditions on higher-order truncated correlations ensuing asymptotic normality under suitable rescaling. Such results are known for zeros of Gaussian analytic functions due to Nazarov and Sodin \cite{Nazarov12,NazarovSodin11}. We use a different proof via identities for multilinear cumulants (Lemma~\ref{l:cumuredu}) that bring out the cancellation in higher cumulants more clearly.  The integral identity implies hyperuniformity. A central limit theorem for indicator statistics of hyperuniform point process was proven in \cite[Theorem 4.3]{Coste2021} (see also \cite{Adhikari2020}) assuming suitable moment bounds and variance growth conditions. In the case of statistics with growing variance (i.e., $2k < d$), our central limit theorem can be deduced straightforwardly from classical cumulant methods; for example see \cite{Nazarov12,Martin80}. However, our method works for more general test functions and can also yield a central limit theorem even in the case of variance decaying to $0$ as in \cite{NazarovSodin11}. 
\end{enumerate}

The rest of the paper is organized as follows: We introduce the required notation and notions for random measures in Section \ref{sec:prelims}. Sections \ref{s:covsmooth} and \ref{s:covindicator} investigate covariance formulas and asymptotics for smooth statistics and indicator statistics of random measures respectively. Central limit theorems for statistics of simple point processes whose truncated correlations functions satisfy a certain integral identity are given in Section \ref{s:clt}. We recall a proof of the cumulant formula in Appendix \ref{s:app_cumulant} and motivate the integral identity for truncated correlation functions by elaborating on the case of finite point processes in Appendix \ref{s:app_discrete}. In Appendix \ref{app_GAFcumulants}, we present our investigations on the integral identity for truncated correlations of zeros of Gaussian analytic functions.

\subsection{Preliminaries on random measures}
\label{sec:prelims}
We will briefly introduce some of the basics for point processes and random measures as well as set-up some notation.  For a more detailed reading on point processes and random measures,  we refer the reader to \cite{Baccelli2021,Last2017,DVJvol2,Kallenberg2017}. 

\paragraph{Notation:}
For $z=(z_1,\ldots ,z_d) \in \R^d$ and $ \alpha=(\alpha_1\ldots ,\alpha_d) \in \Z^d_+$,  set $z^{\alpha} = z_1^{\alpha_1}\ldots z_d^{\alpha_d}$,  $|z^{\alpha}|= |z_1|^{\alpha_1}\ldots |z_d|^{\alpha_d}$,  $\partial^{\alpha} = \partial_1^{\alpha_1}\ldots \partial_d^{\alpha_d}$, $\alpha!=\alpha_1!\ldots \alpha_d!$ and $|\alpha| = \sum_{i=1}^d \alpha_i$. To avoid potential confusion in the two different uses of the symbol $|\cdot |$, note that $|z|^m=(|z_1|+\ldots +|z_d|)^m$. The inner product $\langle \partial^{\alpha}f, \partial^{\beta}g\rangle$ is in $L^2(\R^d)$ and $\|f\|_2^2 = \langle f,f \rangle$.  We often  drop $\R^d$ from $L^1(\R^d)$ and $L^2(\R^d)$ and just use $L^1$ and $L^2$ respectively.   Also we abbreviate $\int_{\R^d} f(x) \md x$ by $\int f$ for $f \in L^1$ whenever the domain of integration is clear.  For example, $\langle f,g \rangle = \int fg$. All the subsets of $\R^d$ that we shall refer to are assumed to be Borel and all functions we shall refer to are assumed to be measurable.

Let $\cM$ be the set of locally-finite measures, $\cN$ be the set of locally-finite atomic measures and $\cN_s$ be the set of simple point measures (i.e., counting measures of locally finite sets), all on $\R^d$.  We equip all the spaces with evaluation $\sigma$-algebra i.e., the smallest $\sigma$-field such that $\mu \mapsto \mu(B)$ is a measurable mapping for all Borel subsets $B$.  

A \textit{random measure} $\cX$ is an $\cM$-valued random variable and \textit{atomic random measure} $\cX$ is a $\cN$-valued random variable. If a random measure $\cX$ is  $\cN_s$-valued,  we refer to it as a  \textit{simple point process}. A random measure $\cX$ is said to be \textit{stationary} if its distribution is invariant under translations of $\R^d$ i.e.,  $\cX  + x  \overset{d}{=} \cX$ for all $x \in \R^d$,  where $(\cX + x)(B) := \cX(B - x)$ for Borel subsets $B$.  We denote the \textit{null measure} by $o$ i.e., $o(B) = 0$ for all Borel subsets $B$.  

The $k$-fold product measure of $\X$ is denoted $\X^k$ and defined by $\X^k(A_1 \times \ldots \times A_k) = \prod_{i=1}^k\X(A_i)$ for   $A_1,\ldots,A_k\subseteq \R^d$. We define the $k$th moment measure $\alpha_k(\cdot) := \sE[\X^k(\cdot)]$, a measure on $(\R^d)^k$ whenever the latter product measure has finite expectations on bounded sets. In particular, intensity/mean measure $\alpha_1$ and the second moment measure $\alpha_2$ satisfy
\begin{align}
\sE\left[\int_{\R^d} f(x) \cX(\md x)\right]  &=  \int_{\R^d} f(x)\alpha_1(\md x),  \no \\
\label{e:campbell} \sE\left[\int_{(\R^d)^2} g(x,y) \cX^2(\md (x,y))\right]  & = \int_{(\R^d)^2} g(x,y) \alpha_2(\md (x,y)),
\end{align}
where $f : \R^d \to \R_+$  and $g : (\R^d)^{2} \to \R_+$. These identities can be extended to real-valued $f,g$ under appropriate integrability assumptions and we will refer to them as \textit{Campbell's} formula in our article.  However,  these are only special case of Campbell's formula which holds for  $\cX^n$ for all  $n \geq 1$ \cite[Chapter 1]{Baccelli2021} or \cite[Section 9.5]{DVJvol2}. 

Further, we can decompose $\alpha_2$ into its diagonal component  $\alpha_D$ and {\em second factorial moment measure} $\alpha_2^!$ as follows. For $g : (\R^d)^{2} \to \R_+$,
\begin{equation}
\label{e:campbell1} \int_{(\R^d)^2} g(x,y) \alpha_2(\md (x,y)) = \int_{x \neq y} g(x,y) \alpha^!_2(\md (x,y)) + \int_{\R^d} g(x,x)\alpha_D(\md x).
\end{equation}
One can see that $\alpha_D$ is a null measure iff $\X_{\text{at}}$ (the atomic part of $\X$ as in \eqref{e:atomic}) is a null measure a.s.; see \cite[Proposition 9.5.III]{DVJvol2}.  Similarly, the  $k$th {\em factorial moment measure} $\alpha^!_k$ can be defined as follows: For $g: (\R^d)^{k} \to \R_+$ vanishing on the diagonal $\{x\in \R^d\ : \ x_i=x_j\mbox{ for some }i\not= j\}$, 
\begin{equation}
\label{e:kcorr}
\sE\left[\int_{(\R^d)^k} g(x_1,\ldots,x_k) \cX^k(\md (x_1,\ldots,x_k))\right]  = \int_{(\R^d)^k} g(x_1,\ldots,x_k) \alpha^!_k(\md (x_1,\ldots,x_k)).
\end{equation}
The density of $\alpha^!_k$, if it exists, is denoted by $\rho_k$,  the $k$-point {\em correlation function}. The definition of $\rho_k$ may be extended to the diagonal by continuity, if any, although this is not important for our analysis; see Remark \ref{rem:corr_diag}.
 
 If $\alpha_D$ is absolutely continuous to Lebesgue measure on $\{(x,x)\}\cong \R^d$, its density is denoted $\rho_D$.  If $\rho_1,\rho_2$ exist, the \textit{truncated (two-point) correlation function} is defined as $\rho_2^T(x,y) := \rho_2(x,y) - \rho_1(x)\rho_1(y)$ for $x \neq y$.

If $\cX$ is stationary,  then $\alpha, \alpha_D$ are translation-invariant and hence  $\rho, \rho_D$ exist and are constants that we denote $\lambda$ and $\lambda_D$ respectively. We refer to $\lambda$ as the intensity of $\cX$.  Further, $\alpha^!_2$ is invariant under simultaneous translation of the two arguments (and so is $\rho_2$, if it exists), and hence for $g : (\R^d)^{2} \to \R_+$ vanishing on the diagonal $\{(x,x)\}$, 
\begin{equation}
\label{e:campbell2} \sE\left[\int_{(\R^d)^2} g(x,y) \cX^2(\md (x,y))\right] = \int_{(\R^d)^2} g(x,x+z) \beta^!_2(\md z) \md x,
\end{equation}
for a {\em symmetric} measure $\beta^!_2$ on the $\R^d \setminus \{0\}$.  The  {\em truncated correlation measure} is the  signed symmetric measure $\bK(\md z) := \beta^!_2(\md z) - \lambda^2\md z$. In case $\bK$ has a density with respect to the Lebesgue measure, we denote it by $\kappa$, i.e., $\bK(\md z) = \kappa(z) \md z$. Of course, $\kappa(z)=\rho_2^T(0,z)$.

Let  $\bK = \bK^+ - \bK^-$ be the  Hahn-Jordan decomposition of $\bK$ and set  $|\bK| := \bK^+ + \bK^-$. Further, $\|\bK\| := |\bK|(\R^d) = \bK^+(\R^d) + \bK^-(\R^d)$ is the {\em total variation} of the measure.  When $\bK$ has density, $|\bK|(\md z) = |\kappa(z)|\md z$ and finite total variation is same as the integrability of $\kappa$. 

\subsection{Correlation functions and atomic decomposition}
\label{s:corr_atomic}

By atomic decomposition of measures \cite[Lemma 1.6]{Kallenberg2017},   we know that there exists a diffuse random measure $\X_{\text{diff}}$ and a purely atomic random measure $\X_{\text{at}}$ such that
\begin{equation}
\label{e:atomic}
\X = \X_{\text{diff}} + \X_{\text{at}}
\end{equation}

We now elaborate on the decomposition into atomic and diffuse parts in \eqref{e:atomic}. In particular, the relation of the decomposition to the intensity and correlation functions.  Though this is well-known in random measure theory,   we write it here for better understanding.  Re-writing \eqref{e:atomic},   we know that there exists a diffuse random measure $\X_{\text{diff}}$,  a random variable $N \in \overline{\mathbb Z}_+ := \{0,1,2 \ldots, \infty\}$, and random vectors $X_i \in \R^d$,  $i\le N$ such that  
\begin{equation}
\label{e:atomic1}
\cX = \X_{\text{diff}} + \X_{\text{at}} =   \X_{\text{diff}} + \sum_{k=1}^{N} \xi_k \delta_{X_k}.
\end{equation}
Moreover, $\eta = \sum_{k=1}^{N} \delta_{X_k}$ is a simple point process and $\xi_k$s are non-negative random variables. The above decomposition is unique upto ordering of $(\xi_k,X_k)$, $k\le N$.   If $N = 0$ a.s.,  then we take $\X_{\text{at}} = o$ a.s. and so $\cX$ is diffuse and if $\X_{\text{diff}} = o$ a.s.,  then $\cX$ is said to be \textit{purely atomic}.  Trivially, $\rho_1$ is the sum of intensity functions of $\X_{\text{diff}}$ and $\X_{\text{at}}$. We have that $\X^2_{\text{diff}}(\{(x,x) : x \in \R^d\}) = 0$ a.s. and hence we have that the diagonal intensity measure $\alpha_D$ of $\cX$ is same as that of $\X_{\text{at}}$.  

Now explicitly describe the intensity and two-point correlation functions in two special cases. It is easy to see that higher-order correlations also can be specified in these two cases. See \cite{Klatt2020,Torquato2021} for some computations of higher-order correlations and moments for certain point processes.
\paragraph{Purely atomic random measures:} Suppose that $\X_{\text{diff}} = o$ and assume that the simple point process $\eta$ has intensity function $\tilde \alpha_1$ and reduced second moment measure $\tilde \alpha^!_2$.   Then we have that those of the random measure $\X$ are given by
\begin{align*}
\alpha_1(\md x) & =  \sE_{x}[\xi_{x}]\tilde \alpha_1(\md x), \, \, \,
\alpha_D(\md x) =  \sE_{x}[\xi_{x}^2]\tilde \alpha_1(\md x),  \\
\alpha^!_2(\md x_1,\md x_2) & =  \sE_{x_1,x_2}[\xi_{x_1} \xi_{x_2}]\tilde \alpha^!_2(\md x_1,\md x_2),
\end{align*}
where $\sE_{x}, \sE_{x_1,x_2}$ are first-order and second-order Palm expectations of the marked point process $\{(X_k,\xi_k)\}_{k \geq 1}$ with respect to $\eta$; see \cite[Section 3.2.4]{Baccelli2021}. Informally, $\sE_{x}[\xi_{x}]$ is the conditional expectation of $\xi_{x}$ (i.e., the mark at $x$) ``conditioned" on $x \in \eta$. Similarly, $\sE_{x_1,x_2}[\xi_{x_1} \xi_{x_2}]$ is the conditional expectation of $\xi_{x_1} \xi_{x_2}$ ``conditioned" on $x_1,x_2 \in \eta$.

Further,  if $\X_{\text{at}}$ is a simple point process ($\xi_k \in \{0,1\}$ for all $k$ a.s.)  then $\alpha_D$ is the intensity measure of $\X_{\text{at}}$.  In particular for a stationary simple point process $\cX$,  $\lambda = \lambda_D$. The three common examples of point processes of interest to us are the Poisson process, Ginibre process and Zeros of the Gaussian entire function; see Section \ref{s:examples}.

\paragraph{Absolutely continuous random measure:} Suppose that $\X = \X_{\text{diff}}$ and $\X \ll  \md x$ a.s.   i.e.,  there exists a random field $X : \R^d \to \R_+$ such that $\X(\md x) = X(x) \md x$ a.s..   Then we have that
\begin{align*}
\rho_1(x) &=  \sE[X(x)], \, \,
\rho_D(x) = 0,  \, \,
\rho_2(x,y) = \sE[X(x)X(y)].  
\end{align*}

\section{Covariance of smooth statistics}
\label{s:covsmooth}
 In this section,  we consider a stationary random measure $\cX$ whose   truncated correlation measure $\bK$ has fast tail decay.   In Section~\ref{s:exactformulas}, we  derive exact formulas for the covariance of linear statistics under rather restrictive assumptions on the test functions. In  Section~\ref{s:varianceupperbounds},  we obtain a general upper bound for the variance of linear statistics under minimal assumptions on the test functions, in particular for Sobolev functions.  Combining the exact formula with the variance bounds, in Section~\ref{s:asymptoticsofcovariance} we derive the asymptotics of the covariance for Sobolev statistics of the rescaled random measure. This is followed by Section~\ref{s:examples} where the conclusions are worked out for a few important examples of point processes and also, we give a simple example of a random measure which can have arbitrary decay of covariance as indicated by our Proposition \ref{l:covasymstatfindiffsobolev}. \\

{\bf For all our results in this section and Section \ref{s:covindicator}, our standing assumption is that $\X$ is a stationary random measure with intensity $\lambda$, diagonal intensity $\lambda_D$ and the truncated correlation measure $\bK$ has finite total variation i.e., $\|\bK\| < \infty$.} \\

Though many stationary point processes satisfy this assumption, an interesting  example where this fails is the stationarized perturbed lattice but nevertheless some of the results here hold for such processes too; see \cite{Yakir21} and Section \ref{s:examples}.

\subsection{Exact formulas}
\label{s:exactformulas}
We first derive some exact formulas for covariance of smooth statistics of a stationary random measure $\cX$. Although the hypotheses of the lemma are strong, the exact formula and its proof technique will be a template for many further computations in the article.  
\begin{lemma}
\label{l:exactcov}
Let $\cX$ be a stationary random measure on $\R^d$ and let $f,g:\R^d\mapsto \R$. Assume that
\begin{enumerate}
\item  $\int |z|^m |\bK|(\md z) < \infty$ for any $m\ge 0$.
\item $g$ is real analytic. Further, $f,g$ and  all  partial derivatives of $g$ are in $L^1\cap L^2$.
\item $\sum\limits_{\alpha\in \Z_+^d}\frac{1}{\alpha!}\|\partial^{\alpha}g\|_2\int |z^{\alpha}| \ |\bK| (\md z) < \infty$.
\end{enumerate}
Let $ I(\gamma):=\int z^{\gamma} \bK(\md z)$ for $\gamma\in \Z_+^d$.   Then
\begin{equation}
\label{eq:covarianceformulageneral}
 \mathrm{Cov}(\cX(f),\cX(g))  = \sum_{p\ge 0}Q_{2p}^{\bK}(f,g),
 \end{equation}
 where 
 \begin{align}\label{eq:formulaforQ} 
 Q_m^{\bK}(f,g)=\begin{cases}
 (\lambda_D + I(0))\ \langle f,g\rangle  &\mbox{ if }m=0, \\
 \sum\limits_{\gamma \in \Z^d_+:\ |\gamma|=m}\frac{1}{\gamma!}I(\gamma)\langle f,  \partial^{\gamma}g \rangle &\mbox{ if }m\ge 1.
 \end{cases}
\end{align}

\end{lemma}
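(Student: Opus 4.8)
The plan is to collapse the covariance into a single integral of a smooth cross-correlation function against the signed measure $\bK$, and then to expand that function in a Taylor series and integrate term by term; the analytic hypotheses (1)--(3) serve only to license the interchange of summation and integration. First I would write $\cov(\cX(f),\cX(g)) = \sE[\cX(f)\cX(g)] - \sE[\cX(f)]\sE[\cX(g)]$, both terms being finite since $f,g\in L^1\cap L^2$. Applying Campbell's formula \eqref{e:campbell} together with the decomposition \eqref{e:campbell1} of $\alpha_2$, the diagonal part contributes $\lambda_D\langle f,g\rangle$ by stationarity, while the factorial part is rewritten via \eqref{e:campbell2} as $\int_{\R^d}\Phi(z)\,\beta^!_2(\md z)$, where $\Phi(z):=\int_{\R^d} f(x)g(x+z)\,\md x=\langle f,g(\cdot+z)\rangle$. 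Since $\sE[\cX(f)]\sE[\cX(g)]=\lambda^2\big(\int f\big)\big(\int g\big)=\lambda^2\int_{\R^d}\Phi(z)\,\md z$ by Fubini and the substitution $y=x+z$, the two Lebesgue-type pieces combine and I obtain the intermediate identity
\[
\cov(\cX(f),\cX(g)) = \lambda_D\langle f,g\rangle + \int_{\R^d}\Phi(z)\,\bK(\md z).
\]
Here $\Phi$ is bounded, $|\Phi|\le\|f\|_2\|g\|_2$, and lies in $L^1$ by Young's inequality (writing $\Phi=\tilde f*g$ with $\tilde f(x)=f(-x)$), so each integral is finite; it is essential to subtract the Lebesgue parts \emph{before} integrating, so as never to integrate $\Phi$ against the infinite-mass measure $\beta^!_2$ on its own.

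Next I would insert the Taylor expansion $g(x+z)=\sum_{\gamma\in\Z_+^d}\frac{z^\gamma}{\gamma!}\partial^\gamma g(x)$, valid by the real analyticity of $g$ in assumption (2), into $\int\Phi(z)\,\bK(\md z)$ and perform a single interchange of the sum over $\gamma$ with the integrals $\md x$ and $\bK(\md z)$, giving
\[
\int_{\R^d}\Phi(z)\,\bK(\md z)=\sum_{\gamma\in\Z_+^d}\frac{1}{\gamma!}\,I(\gamma)\,\langle f,\partial^\gamma g\rangle.
\]
The justification is one Fubini argument on the product of counting measure in $\gamma$, Lebesgue $\md x$ and $|\bK|(\md z)$: the absolute sum equals $\sum_\gamma\frac{1}{\gamma!}\big(\int|f|\,|\partial^\gamma g|\big)\big(\int|z^\gamma|\,|\bK|(\md z)\big)$, which by Cauchy--Schwarz is at most $\|f\|_2\sum_\gamma\frac{1}{\gamma!}\|\partial^\gamma g\|_2\int|z^\gamma|\,|\bK|(\md z)$, finite precisely by assumption (3); assumption (1) guarantees each moment $\int|z^\gamma|\,|\bK|(\md z)$ is finite to begin with.

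Finally I would group the terms by total degree, setting $Q_m^{\bK}(f,g)=\sum_{|\gamma|=m}\frac{1}{\gamma!}I(\gamma)\langle f,\partial^\gamma g\rangle$ and absorbing the diagonal contribution into the $m=0$ summand to obtain $(\lambda_D+I(0))\langle f,g\rangle$, matching \eqref{eq:formulaforQ}. The reduction to even orders is then immediate from the symmetry of $\bK$: since $\bK(A)=\bK(-A)$, the change of variables $z\mapsto -z$ yields $I(\gamma)=(-1)^{|\gamma|}I(\gamma)$, so $I(\gamma)=0$ and hence $Q_m^{\bK}(f,g)=0$ whenever $m=|\gamma|$ is odd, leaving $\cov(\cX(f),\cX(g))=\sum_{p\ge0}Q_{2p}^{\bK}(f,g)$.

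I expect the main obstacle to be the interchange step. Two things must hold: the pointwise Taylor identity $g(x+z)=\sum_\gamma\frac{z^\gamma}{\gamma!}\partial^\gamma g(x)$ must be valid for the $z$ actually charged by $\bK$ (so the radius of convergence furnished by real analyticity in (2) must cover $\mathrm{supp}\,\bK$, e.g.\ $g$ entire or $\bK$ of small enough support), and the triple absolute integral must converge. The second is exactly what assumptions (1) and (3) supply through the Cauchy--Schwarz bound $|\langle f,\partial^\gamma g\rangle|\le\|f\|_2\|\partial^\gamma g\|_2$; once both are in place the remaining manipulations are routine bookkeeping.
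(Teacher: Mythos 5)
Your proposal is correct and follows essentially the same route as the paper: Campbell's formula plus the decomposition of $\alpha_2$ to reduce the covariance to $\lambda_D\langle f,g\rangle+\int\Phi(z)\,\bK(\md z)$, then a Taylor expansion of the translated $g$ with the interchange justified by assumption (3) via Cauchy--Schwarz, and the parity of $\bK$ to kill the odd-order terms. The only differences are cosmetic (you keep the $\gamma=0$ term inside the Taylor sum where the paper splits it off first, and you expand $g(x+z)$ rather than $g(x-z)$, which is immaterial by the symmetry of $\bK$); your remark that real analyticity must supply global convergence of the Taylor series over the support of $\bK$ is a fair caveat that applies equally to the paper's own argument.
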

Though not as strong as in the first assumption, some assumption on asymptotic decay of correlations will be present throughout the paper. The real analyticity assumption in the second one is a strong assumption on the test functions to enable us to write a clean formula for the covariance. This assumption will be relaxed. The third assumption is  to ensure that the resulting series is convergent. If $\int e^{C|z|} |\bK|(\md z) < \infty$  and $\|\partial^{\alpha}g\|_2\le C^{|\alpha|}$ for some $C$ and all $\alpha$, then this condition can be verified.
\begin{proof}
The assumption that  $f \in L^1 \cap L^2$ ensures that $\cX(f)$ and $\cX(f)^2$ have finite expectations due to \eqref{e:campbell} and \eqref{e:campbell1} respectively and hence $\X(f)$ has finite variance. Once $\cX(f)$ and $\cX(g)$ have finite variances, their covariance is well-defined. Now using Campbell's formula and the symmetry of the truncated correlation measure,  we can write the covariance as
\begin{align}
&\cov(\cX(f),\cX(g)) =\int f(x)g(x) \alpha_D(\md x) + \int\!\!\!\int f(x)g(y)(\alpha_2^!-\lambda^2)(\md (x,y))  \nonumber \\
&= \left[\lambda_D + \int \bK \right]\int fg + \int \!\!\int [f(x)g(x-z) -f(x) g(x)] \bK(\md z)  \md x \label{eq:covariancefirtformulaprelim} 
\end{align}
The absolute integrability required for  reordering the integrals follows from the following bounds: 
\begin{align*}
\int\!\!\!\int |\bK|(\md z) \, |f(x)| \, |g(x)| \md x &=\left(\int |\bK|\right)\left(\int |fg|\right)\le \|\bK\| \, \|f\|_2 \, \|g\|_2 \\
\int\!\!\!\int |\bK|(\md z)|f(x)||g(x-z)| \md x &=\||\bK| * |g|\|_2\|f\|_2 \le \|\bK\| \|f\|_2\|g\|_2.
\end{align*}
The last inequality is true because $\bK$ has finite total variation and $g\in L^2$. 

In the second term in \eqref{eq:covariancefirtformulaprelim},  use real analyticity of $g$ and expand $g(x-z)-g(x)$ in power series to obtain
\begin{align*}
 \int \!\!\!\int f(x)[g(x-z)-g(x)] \bK(\md z)  \md x \nonumber &=  \int \!\!\!\int  f(x) \times \sum_{\alpha\not=0}\frac{1}{\alpha!}\partial^{\alpha}g(x)z^{\alpha} \ \bK(\md z) \md x \nonumber \\
 & = \sum_{\alpha \in \mathbb Z_+^d\setminus\{0\} }\frac{1}{\alpha!} \langle f, \partial^{\alpha}g\rangle  I(\alpha). 
\end{align*}
To see that the sum and the two integrals can be ordered in any way (as we did), apply absolute values to the inner term and use the third assumption in the statement of the lemma. Finally,  notice that symmetry of $\bK$ implies that $I(\alpha)=0$ if $|\alpha|$ is odd to obtain \eqref{eq:covarianceformulageneral}.
%
%
\end{proof}
Let us restate the exact formula using Fourier transforms as commonly done in the literature. It will be a more convenient form to work with later, when we try to reduce the assumptions on the test functions. First let us  fix our conventions regarding the Fourier transform.

For $f\in L^1$, the Fourier transform $\hat{f}$ is defined by $\hat{f}(t) = \int f(x)e^{i\<x,t \>}\md x$ for $t \in \R^d$ and for a signed measure $\mu$ with finite total variation (e.g., $\bK$), the same definition works, i.e., $\hat \mu(t) =  \int e^{i\<x,t \>}\mu(\md x)$. For $f,g\in L^1\cap L^2$,  the Plancherel identity $\<f,g\>=(2\pi)^{-d}\<\hat{f},\hat{g}\>$ holds.  As $L^1 \cap L^2$ is dense in $L^2$,  the Fourier transform extends continuously to $L^2$ and the Plancherel identity continues to hold.  If $f\in L^2$ has partial derivatives (in the weak sense) up to order $k$ and all of them are in $L^2$,  then $\widehat{\partial^{\alpha} f}(t)=(-it)^{\alpha}\hat{f}(t)$ for $|\alpha|\le k$. Conversely, if $(iz)^{\alpha}f(z)$ is in $L^2$, then its Fourier transform is $\partial^{\alpha}\hat{f}$. These are essentially the same fact, because of the Fourier inversion formula $\hat{\hat{f}}(z)=(2\pi)^df(-z)$.

Just using these facts one can rewrite $Q_{m}^{\bK}(f,g)$ in terms of Fourier transforms as in \eqref{eq:fourierformulaforQ} below.  However, we rederive it from scratch, as the preliminary expressions will be useful later and also because the conditions on $\bK$ and $f,g$ will be changed slightly.
\begin{lemma}
\label{l:exactcovfourier}
Let $\cX$ be a stationary random measure and let $f,g:\R^d\mapsto \R$. Assume that
\begin{enumerate}
\item $\widehat \bK$ is real analytic on $\R^d$.
\item $g$ is smooth. Further, $f,g$ and  all  partial derivatives of $g$ are in $L^1\cap L^2$.
\item $\sum\limits_{\gamma \in \Z^d_+}  \frac{1}{\gamma!} |\partial^{\gamma}\widehat{\bK}(0)| \, \,  \|\partial^{\gamma}g\|_2< \infty.$
\end{enumerate}
Then \eqref{eq:covarianceformulageneral} holds where the summands can be rewritten as follows:
 \begin{align} \label{eq:fourierformulaforQ} 
 Q_m^{\bK}(f,g)=\begin{cases}
 (2\pi)^{-d}\sum\limits_{\gamma \in \Z^d_+:\ |\gamma|=m}\frac{\partial^{\gamma}\widehat{\bK}(0)}{\gamma!}\int t^{\gamma}\hat{f}(t)\overline{\hat{g}(t)} \md t &\mbox{ if }m\ge 1, \\
 (2\pi)^{-d}(\lambda_D + \widehat{\bK}(0))\ \int \hat{f}(t)\overline{\hat{g}(t)}\md t  &\mbox{ if }m=0.
 \end{cases}
\end{align}
\end{lemma}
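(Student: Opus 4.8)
The plan is to avoid re-expanding in the spatial domain and instead push everything through Plancherel's identity, placing the Taylor expansion on $\widehat{\bK}$ rather than on $g$. The natural starting point is the identity \eqref{eq:covariancefirtformulaprelim}, whose derivation uses only Campbell's formula together with the standing assumption $\|\bK\| < \infty$ (the real analyticity of $g$ invoked later in Lemma~\ref{l:exactcov} is not needed to reach it). Thus I may begin from
\[
\cov(\cX(f),\cX(g)) = \big(\lambda_D + \widehat{\bK}(0)\big)\langle f,g\rangle + \int\!\!\int \big[f(x)g(x-z) - f(x)g(x)\big]\,\bK(\md z)\,\md x,
\]
using that $\int \bK = \widehat{\bK}(0)$.

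First I would rewrite the shifted term in frequency. Since $g$ is real, $\int f(x)g(x-z)\,\md x = \langle f, g(\cdot - z)\rangle$, and by Plancherel together with the shift rule $\widehat{g(\cdot-z)}(t) = e^{i\langle z,t\rangle}\hat g(t)$ this equals $(2\pi)^{-d}\int \hat f(t)\overline{\hat g(t)}\,e^{-i\langle z,t\rangle}\,\md t$. Integrating against $\bK(\md z)$ and interchanging the $z$- and $t$-integrals — legitimate because $\int\!\int |\hat f(t)||\hat g(t)|\,|\bK|(\md z)\,\md t \le \|\bK\|\,\|\hat f\|_2\|\hat g\|_2 < \infty$ — produces the inner factor $\int e^{-i\langle z,t\rangle}\bK(\md z) = \widehat{\bK}(-t) = \widehat{\bK}(t)$, the last equality being the symmetry of $\bK$. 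Applying Plancherel once more to the two $\langle f,g\rangle$ terms and cancelling the $\widehat{\bK}(0)\langle f,g\rangle$ contributions, I arrive at the compact form
\[
\cov(\cX(f),\cX(g)) = (2\pi)^{-d}\int \hat f(t)\overline{\hat g(t)}\big(\lambda_D + \widehat{\bK}(t)\big)\,\md t.
\]

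Next I would insert the Taylor expansion $\widehat{\bK}(t) = \sum_{\gamma\in\Z_+^d}\tfrac{\partial^{\gamma}\widehat{\bK}(0)}{\gamma!}t^{\gamma}$, supplied by assumption~1, and interchange summation with integration. To justify this I would combine Cauchy--Schwarz, the pointwise identity $|t^{\gamma}\hat g(t)| = |\widehat{\partial^{\gamma}g}(t)|$, and Plancherel to obtain the clean bound $(2\pi)^{-d}\big|\int t^{\gamma}\hat f\,\overline{\hat g}\big| \le \|f\|_2\,\|\partial^{\gamma}g\|_2$; summing then gives $\sum_{\gamma}\tfrac{|\partial^{\gamma}\widehat{\bK}(0)|}{\gamma!}(2\pi)^{-d}\big|\int t^{\gamma}\hat f\,\overline{\hat g}\big| \le \|f\|_2\sum_{\gamma}\tfrac{|\partial^{\gamma}\widehat{\bK}(0)|}{\gamma!}\|\partial^{\gamma}g\|_2$, which is finite by assumption~3. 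Splitting off $\gamma = 0$ yields the $m=0$ case of \eqref{eq:fourierformulaforQ}, and grouping the remaining multi-indices by $m=|\gamma|$ yields the $m\ge 1$ case; since $\widehat{\bK}$ is even (symmetry of $\bK$) its odd-order derivatives at $0$ vanish, so $Q_m^{\bK}=0$ for odd $m$ and the sum collapses to $\sum_{p\ge 0}Q_{2p}^{\bK}(f,g)$, matching \eqref{eq:covarianceformulageneral}.

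The main obstacle I anticipate is the precise meaning of assumption~1: real analyticity at a point only guarantees a locally convergent Taylor series, whereas I need $\widehat{\bK}$ to coincide with its series at $0$ throughout $\R^d$ in order to integrate term by term against $\hat f\,\overline{\hat g}$. In the intended regime this is automatic — an exponential moment $\int e^{C|z|}|\bK|(\md z) < \infty$ makes $\widehat{\bK}$ the restriction of an entire function, so the expansion converges globally — and I would read assumption~1 as furnishing such a globally valid representation, after which the termwise interchange is controlled entirely by assumption~3 as above. As a consistency check worth recording, the two forms of $Q_m^{\bK}$ agree: $\partial^{\gamma}\widehat{\bK}(0) = i^{|\gamma|}I(\gamma)$ while $(2\pi)^{-d}\int t^{\gamma}\hat f\,\overline{\hat g} = (-i)^{|\gamma|}\langle f,\partial^{\gamma}g\rangle$, and their product restores the spatial coefficient $\tfrac{1}{\gamma!}I(\gamma)\langle f,\partial^{\gamma}g\rangle$ of \eqref{eq:formulaforQ}.
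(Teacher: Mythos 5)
Your proposal is correct and follows essentially the same route as the paper: both reduce to the Plancherel form $\cov(\cX(f),\cX(g)) = (2\pi)^{-d}\int \hat f(t)\overline{\hat g(t)}(\widehat{\bK}(t)+\lambda_D)\,\md t$ (the paper gets there by writing the covariance as $\langle f, g*(\bK+\lambda_D\delta_0)\rangle$, you by the shift rule plus Fubini, which amounts to the same computation), then Taylor-expand $\widehat{\bK}$ at the origin and interchange sum and integral using the bound $(2\pi)^{-d}|\int t^{\gamma}\hat f\,\overline{\hat g}|\le \|f\|_2\|\partial^{\gamma}g\|_2$ together with assumption~3, with odd terms vanishing by evenness of $\widehat{\bK}$. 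Your remark that assumption~1 must be read as a globally valid power-series representation of $\widehat{\bK}$ is a fair observation about a point the paper leaves implicit, and your reading matches the intended regime.
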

The conclusion of the lemma is exactly the same as that of Lemma~\ref{l:exactcov}. The equivalence of \eqref{eq:fourierformulaforQ} with \eqref{eq:formulaforQ} is also easy to see from the fact that  $\partial^{\gamma}\widehat{\bK}(0)=i^{|\gamma|}I(\gamma)$ and 
\begin{equation}
\label{e:q2mfourier}
\<f,\partial^{\gamma}g\>=(2\pi)^{-d}\<\hat{f},\widehat{\partial^{\gamma} g}\>=(2\pi)^{-d}(-i)^{|\gamma|}\int t^{\gamma}\hat{f}(t)\overline{\hat{g}(t)}\md t.
\end{equation}
But a subtle change is that compared to Lemma~\ref{l:exactcov}, in  Lemma~\ref{l:exactcovfourier} the assumption on $\bK$ is stronger (that the tail of $|\bK|$ decays faster than any polynomial only implies the smoothness of $\widehat{\bK}$, not real analyticity), while the assumption on $g$ is weaker (integrability of $t^{\gamma}\hat{f} \, \overline{\hat{g}}$ is ensured by the smoothness of $g$, there is no need for real-analyticity). 

\begin{proof}[Proof of Lemma~\ref{l:exactcovfourier}]
Rewriting \eqref{eq:covariancefirtformulaprelim} and using Plancherel's formula, we can derive
\begin{align}
\cov(\cX(f),\cX(g)) &= \<f,g*(\bK+\lambda_D\delta_0)\> =\frac{1}{(2\pi)^d}\<\hat{f},\hat{g}(\widehat{\bK}+\lambda_D)\> \nonumber \\
&=\frac{1}{(2\pi)^d}\int \hat{f}(t)\overline{\hat{g}(t)}(\widehat{\bK}(t)+\lambda_D) \md t. \label{eq:prelimcovformulafourier}
\end{align}
Since $g \in L^1 \cap L^2$ and $\bK$ has finite total variation, we have $g*(\bK+\lambda_D\delta_0) \in L^1 \cap L^2$ and hence we can use Plancherel's identity.  As $\bK$ is a symmetric measure, $\widehat{\bK}$ is real-valued and hence we omitted the conjugation on it.  Now, using the real analyticity of $\widehat{\bK}$,  we expand $\widehat{\bK}$ in power series about the origin  to obtain  
\begin{align*}
\cov(\cX(f),\cX(g)) &= \frac{(\widehat{\bK}(0)+\lambda_D)}{(2\pi)^d} \int \hat{f}(t)\overline{\hat{g}}(t)\md t \ + \ \frac{1}{(2\pi)^d}\sum_{\gamma\in \Z_+^d:|\gamma|\in 2\N} \frac{\partial^{\gamma}\widehat{\bK}(0)}{\gamma!}\int t^{\gamma} \hat{f}(t)\overline{\hat{g}(t)}\md t.
\end{align*}
The interchange of sum and integral is justified using the third assumption and \eqref{e:q2mfourier} and furthermore the terms with odd $|\gamma|$ dropped out as $\widehat{\bK}$ is an even function.  How the sum of the terms corresponding to $|\gamma|=2k$  is equal to $Q_{2k}^{\bK}(f,g)$ follows from \eqref{e:q2mfourier}.  
\end{proof}
The exact formula simplifies if $\cX$ satisfies additional symmetries.  As many random measures do have these additional symmetries, it is worth stating these formulas.  While  doing that, we also make equal assumptions on $f$ and $g$.
\begin{corollary}\label{cor:varianceunderinvariance} Let the setting be as in Lemma~\ref{l:exactcov} and in addition, assume that $f$ satisfies the same assumptions as $g$.   
\begin{enumerate}
\item {\bf Flip invariance:} If $\bK$ is invariant under reflections in the co-ordinate hyperplanes, then $I(\gamma)=0$ unless $\gamma\in (2\Z_+)^d$ and hence,
\begin{align}
Q_{2p}^{\bK}(f,g)=\sum_{|\gamma|=p}\frac{(-1)^p}{(2\gamma)!}I(2\gamma)\langle \partial^{\gamma} f,  \partial^{\gamma}g\rangle
\label{eq:covarianceformulaflip}
\end{align}

\item {\bf Orthogonal invariance:} If $\bK$ is invariant under orthogonal transformations, then 
\begin{align}
Q_{2p}^{\bK}(f,g)  = \begin{cases}(\lambda_D + \sigma_dJ(d-1))\ \langle f,g\rangle & \mbox{ if }p=0, \\
\frac{(-1)^p\sigma_d J(d-1+2p)}{p! \ 2^p \ d(d+2)\ldots (d+2p-2)}\langle \Delta^{p/2}f, \Delta^{p/2}g\rangle & \mbox{ if }p\ge 1. \end{cases}   \label{eq:covarianceformularot} 
\end{align}
Here $\sigma_d:=\sigma(\mS^{d-1})$ is the surface area of $\mS^{d-1}$ and $J(p):=\int_0^{\infty}r^{p} \,\bK(\md r)$ and $\Delta$ is the Laplacian, whose $\tfrac{p}{2}$ power is interpreted as $\Delta^{p/2}f:=\nabla(\Delta^q f)$ if $p=2q+1$. 
\end{enumerate}
\end{corollary}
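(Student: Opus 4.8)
The plan is to start from the spatial formula \eqref{eq:formulaforQ} for $Q_m^{\bK}(f,g)$ and use each symmetry to annihilate most of the multi-indices $\gamma$ that appear, then convert the surviving derivatives into the stated bilinear forms.

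\textbf{Flip invariance.} Reflection in the $j$-th coordinate hyperplane sends $z\mapsto(z_1,\dots,-z_j,\dots,z_d)$, under which $z^{\gamma}$ acquires the sign $(-1)^{\gamma_j}$ while $\bK$ is unchanged; hence $I(\gamma)=(-1)^{\gamma_j}I(\gamma)$, so $I(\gamma)=0$ whenever some $\gamma_j$ is odd. Consequently, in $Q_{2p}^{\bK}(f,g)=\sum_{|\gamma|=2p}\frac{1}{\gamma!}I(\gamma)\langle f,\partial^{\gamma}g\rangle$ only the indices $\gamma=2\beta$ with $|\beta|=p$ survive, and it remains to rewrite $\langle f,\partial^{2\beta}g\rangle$. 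Since $f,g$ and all derivatives of $g$ lie in $L^2$, Plancherel together with $\widehat{\partial^{\alpha}h}=(-it)^{\alpha}\hat h$ gives $\langle f,\partial^{2\beta}g\rangle=(2\pi)^{-d}\int\hat f\,\overline{(-it)^{2\beta}\hat g}\,\md t=(-1)^{|\beta|}\langle\partial^{\beta}f,\partial^{\beta}g\rangle$ (equivalently, integrate by parts $|\beta|$ times, the boundary terms vanishing by the $L^1\cap L^2$ decay). Relabelling $\beta$ as $\gamma$ yields \eqref{eq:covarianceformulaflip}.

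\textbf{Orthogonal invariance.} Reflections are orthogonal, so the vanishing of $I(\gamma)$ off $(2\Z_+)^d$ persists. Here it is cleanest to use the equivalent Fourier form \eqref{eq:fourierformulaforQ} of $Q_{2p}^{\bK}$: the inner factor $P_{2p}(t):=\sum_{|\gamma|=2p}\frac{\partial^{\gamma}\widehat{\bK}(0)}{\gamma!}t^{\gamma}$ is exactly the homogeneous degree-$2p$ term of the Taylor expansion of $\widehat{\bK}$ at the origin. Because $\bK$, and hence $\widehat{\bK}$, is orthogonally invariant, each homogeneous Taylor component is an orthogonally invariant homogeneous polynomial, and the only such polynomials of degree $2p$ are scalar multiples of $|t|^{2p}=(t_1^2+\dots+t_d^2)^p$. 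Evaluating $P_{2p}$ at $t=e_1$ (only $\gamma=2p\,e_1$ contributes) and using $\partial^{2p\,e_1}\widehat{\bK}(0)=i^{2p}I(2p\,e_1)=(-1)^pI(2p\,e_1)$ pins down the constant: $P_{2p}(t)=\frac{(-1)^p}{(2p)!}I(2p\,e_1)\,|t|^{2p}$.

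It remains to match $|t|^{2p}$ to the Laplacian and to evaluate $I(2p\,e_1)$. By Plancherel and $\widehat{\Delta h}=-|t|^2\hat h$ one checks, separately for $p$ even (where $\widehat{\Delta^{p/2}h}=(-1)^{p/2}|t|^p\hat h$) and $p$ odd (where $\Delta^{p/2}h=\nabla\Delta^{(p-1)/2}h$ is vector-valued and the pairing of $-it$ with its conjugate supplies the missing $|t|^2$), that $\langle\Delta^{p/2}f,\Delta^{p/2}g\rangle=(2\pi)^{-d}\int|t|^{2p}\hat f(t)\overline{\hat g(t)}\,\md t$. Substituting $P_{2p}$ into \eqref{eq:fourierformulaforQ} gives $Q_{2p}^{\bK}(f,g)=\frac{(-1)^p}{(2p)!}I(2p\,e_1)\langle\Delta^{p/2}f,\Delta^{p/2}g\rangle$. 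Finally, writing $\bK$ in polar coordinates $z=r\theta$ so that $\int h\,\bK=\int_0^{\infty}\int_{\mS^{d-1}}h(r\theta)\,\sigma(\md\theta)\,r^{d-1}\bK(\md r)$ with $J(p)=\int_0^{\infty}r^p\bK(\md r)$, the radius and direction decouple: $I(2p\,e_1)=J(d-1+2p)\int_{\mS^{d-1}}\theta_1^{2p}\,\sigma(\md\theta)$, while for $p=0$ one has $I(0)=\sigma_d J(d-1)$, giving the stated $p=0$ constant. Inserting the classical single-coordinate moment $\int_{\mS^{d-1}}\theta_1^{2p}\,\sigma(\md\theta)=\sigma_d\,(2p-1)!!/\big(d(d+2)\cdots(d+2p-2)\big)$ together with $(2p-1)!!=(2p)!/(2^p p!)$ collapses the prefactor to exactly $\frac{(-1)^p\sigma_d J(d-1+2p)}{p!\,2^p\,d(d+2)\cdots(d+2p-2)}$, which is \eqref{eq:covarianceformularot}.

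The routine parts are the sign bookkeeping in the Plancherel identities. The step demanding real care is the orthogonal case, where I must (i) justify that the degree-$2p$ Taylor component inherits orthogonal invariance and hence reduces to a multiple of $|t|^{2p}$, and (ii) normalize the polar decomposition so that the sphere moment produces precisely the product $d(d+2)\cdots(d+2p-2)$ and the factor $(2p)!$ cancels against $(2p-1)!!$. This normalization matching, rather than any conceptual difficulty, is the main obstacle.
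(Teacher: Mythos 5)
Your proof is correct. The flip-invariance part is essentially identical to the paper's: the reflection sign argument kills $I(\gamma)$ off $(2\Z_+)^d$, and $\langle f,\partial^{2\beta}g\rangle=(-1)^{|\beta|}\langle\partial^{\beta}f,\partial^{\beta}g\rangle$ by integration by parts or Plancherel.

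For the orthogonal case you take a genuinely different route. The paper stays on the spatial side: it writes $\bK$ in polar coordinates, evaluates \emph{every} moment $I(2\gamma)$ via the Dirichlet integral (the full sphere moment $B_d(2\gamma)$ expressed in Gamma functions), and then collapses the sum $\sum_{|\gamma|=p}\frac{1}{\gamma!}\langle f,\partial^{2\gamma}g\rangle$ using the multinomial expansion of $\Delta^{p}$, finishing with $\langle f,\Delta^{p}g\rangle=(-1)^{p}\langle\Delta^{p/2}f,\Delta^{p/2}g\rangle$. You instead pass to the Fourier form \eqref{eq:fourierformulaforQ} (legitimate here, since for each fixed $m$ the identity between \eqref{eq:formulaforQ} and \eqref{eq:fourierformulaforQ} needs only finitely many moments of $|\bK|$, which the hypotheses of Lemma~\ref{l:exactcov} supply, together with $\partial^{\gamma}\widehat{\bK}(0)=i^{|\gamma|}I(\gamma)$), observe that the degree-$2p$ Taylor component of the orthogonally invariant function $\widehat{\bK}$ must be a multiple of $|t|^{2p}$, and pin the constant by a single evaluation at $t=e_1$. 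This replaces the computation of all the $B_d(2\gamma)$ and the multinomial collapse by one invariance argument plus the single sphere moment $\int_{\mS^{d-1}}\theta_1^{2p}\,\sigma(\md\theta)=\sigma_d(2p-1)!!/\bigl(d(d+2)\cdots(d+2p-2)\bigr)$; your double-factorial bookkeeping $(2p-1)!!/(2p)!=1/(2^{p}p!)$ reproduces the stated prefactor, and your case-by-case check that $\langle\Delta^{p/2}f,\Delta^{p/2}g\rangle=(2\pi)^{-d}\int|t|^{2p}\hat f\,\overline{\hat g}$ correctly accounts for the $(-1)^{p}$. The paper's computation yields the individual moments $I(2\gamma)$ as a by-product; yours is shorter and makes the structural reason for the $|t|^{2p}$ (equivalently $\Delta^{p}$) form more transparent.
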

$\bK$ satisfies flip (resp. orthogonal) invariance, if $\X$ satisfies  flip (resp. orthogonal) invariance in distribution.
\begin{proof} We start with \eqref{eq:covarianceformulageneral} and simplify it under the extra assumptions. \\

\noindent {\bf (1).} Invariance under reflections in the co-ordinate hyperplanes implies that $\bK$ is an even function of each co-ordinate  and therefore $I(\gamma)=0$ unless $\gamma\in (2\Z_+)^d$. For $\gamma=2\beta$, integrate by parts to write $\<f,\partial^{2\gamma}g\>$ as $(-1)^{|\beta|}\<\partial^{\beta}f,\partial^{\beta}g\>$ to get \eqref{eq:covarianceformulaflip}. \\

\noindent {\bf (2).} Invariance under orthogonal transformations and transforming into polar co-ordinates, we can write $\bK(\md z)= r^{d-1}\bK(\md r)\sigma(\md \omega)$ where $z = r\omega$ with $r>0$, $\omega\in \mS^{d-1}$ and $\sigma$ denotes the surface-area measure on $\mS^{d-1}$. Here we abuse notation by using $\bK$ for the measure on radial part as well. Hence,
\begin{align*}
I(2\gamma) &= \left(\int_{\R_+} r^{d-1+2|\gamma|} \bK(\md r) \right)\times \left( \int_{\mS^{d-1}}\omega_1^{2\gamma_1}\ldots \omega_d^{2\gamma_d} \ d\sigma(\omega) \right) \\
&= \sigma_dJ(d-1+2|\gamma|) B_d(2\gamma),
\end{align*}
with $\sigma_d$ and $J_p$ as in the statement and
\[
B_d(2\gamma):=\frac{1}{\sigma_d}\int_{\mS^{d-1}}\omega_1^{2\gamma_1}\ldots \omega_d^{2\gamma_d} d\sigma(\omega) = \frac{\Gamma(\frac{d}{2})\Gamma(\frac12+\gamma_1)\ldots \Gamma(\frac12+\gamma_d)}{\Gamma(\frac12)^d \ \Gamma(\frac{d}{2}+|\gamma|)}.
\]
The last formula comes from the Dirichlet integral and the fact that on the probability space $(\mS^{d-1},\sigma(\cdot)/\sigma_d)$, the random vector $(\omega_1^2,\ldots ,\omega_d^2)$ has Dirichlet($\frac12,\ldots ,\frac12$) distribution. After some simplifications,
\begin{align*}
\frac{1}{(2\gamma)!}I(2\gamma) &=  \frac{\sigma_d J(d-1+2|\gamma|)}{2^{|\gamma|} d(d+2)\ldots (d+2|\gamma|-2)}\times \frac{1}{\gamma!}.
\end{align*}
Now it is clear that for $Q_{0}^{\bK}(f,g)$ is as claimed and that for $p\ge 1$,
\begin{align*}
Q_{2p}^{\bK}(f,g) &= \frac{\sigma_d J(d-1+2p)}{2^pd(d+2)\ldots (d+2p-2)}\sum_{|\gamma|=p}\frac{1}{\gamma!}\langle f, \partial^{2\gamma}g\rangle \\
&=\frac{\sigma_d J(d-1+2p)}{p! \ 2^p \ d(d+2)\ldots (d+2p-2)}\langle f, \Delta^{p}g\rangle.
\end{align*}
Here $\Delta=\partial_1^2+\ldots +\partial_d^2$ is the Laplacian and to get there from the previous line, we just used the multinomial expansion of $\Delta^p$. Integrating by parts, we can write $\<f,\Delta^pg\>$ as $\langle \Delta^{p/2}f, \Delta^{p/2}g\rangle$, with the interpretation for odd $p$ as in the statement of the corollary. Thus  \eqref{eq:covarianceformularot} follows and thereby completes the proof of the corollary. 
\end{proof}

\subsection{Variance upper bounds}
\label{s:varianceupperbounds}
In this section, we wish to write explicit variance upper bounds for linear statistics with minimal assumptions on smoothness of the test functions. This will be useful in transferring results from smooth linear statistics to more general linear statistics. For example, we use it in finding covariance asymptotics (Proposition~\ref{l:covasymstatfindiffsobolev}) and in proving central limit theorems (Theorem~\ref{thm:cltsmooth}), both with minimal smoothness assumptions.

We recall the Sobolev space $H^k=H^k(\R^d)$ (we follow the conventions and notations in~\cite[Chapter 2]{kesavan}), the space of all functions on $\R^d$ whose weak derivatives up to and including order $k$ exist and are in $L^2$. It is a Hilbert space with norm $\|f\|_{H^k}^2:=(2\pi)^{-d}\int (1+|z|^2)^{k}|\hat{f}(z)|^2 \md z$. This is a standard choice for the Sobolev norm, equivalent to another standard the definition as the square root of the sum of  squared $L^2$ norms of all partial derivatives up to and including order $k$. In the latter if we take only $L^2$ norms of partial derivatives of order equal to $k$, then it is a pseudo-norm equivalent to  $|f|_{H^k}^2:=(2\pi)^{-d}\int |z|^{2k}|\hat{f}(z)|^2 \md z$. For example, when $k=1$, then $|f|_{H^1}^2\asymp \|\nabla f\|_{2}^2$ while $\|f\|_{H^1}^2=\|f\|_{2}^2+\|\nabla f \|_2^2$. It is a common convention to set $H^0=L^2$, for which of course $\|f\|_{H^0}^2=\|f\|_2^2 = |f|_{H^0}^2$.

\begin{lemma}\label{lem:varianceupperbounds} Let $\cX$ be a stationary random measure on $\R^d$. Assuming $\|\bK\| < \infty$, 
\[
\Var{\cX(f)}\le (\lambda_D + \|\widehat{\bK}\|_{\infty})\|f\|_2^2 \;\; \mbox{ for any }f\in L^1\cap L^2.
\]
If $\lambda_D + I(0)=0$ and $I(\gamma)=0$ for $|\gamma|<2k$ for some $k\ge 1$, then assuming that $\widehat{\bK}\in C^{2k}$ and that its partial derivatives up to order $2k$ are uniformly bounded, with  $\|D^{2k}\widehat{\bK}\|_{\infty}:=\sup\limits_{|\gamma|= 2k}\|\partial^{\gamma}\widehat{\bK}\|_{\infty}$, we have
\[
\Var{\cX(f)}\le \frac{d^k}{(2k-1)!}\|D^{2k}\widehat{\bK}\|_{\infty}\, |f|_{H^{k}}^2 \;\; \mbox{ for any }f\in L^1\cap H^k.
\]
\end{lemma}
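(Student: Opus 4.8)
The engine for both bounds is the Fourier representation of the covariance derived in \eqref{eq:prelimcovformulafourier}, whose proof only used Plancherel together with $\|\bK\|<\infty$ and $g\in L^1\cap L^2$. Specializing to $g=f$ gives, for every $f\in L^1\cap L^2$,
\[
\Var{\cX(f)}=\frac{1}{(2\pi)^d}\int |\hat f(t)|^2\bigl(\widehat{\bK}(t)+\lambda_D\bigr)\,\md t .
\]
Because $\bK$ is symmetric, $\widehat{\bK}$ is real-valued and bounded by $\|\bK\|$, so the ``symbol'' $\widehat{\bK}(t)+\lambda_D$ is at most $\|\widehat{\bK}\|_\infty+\lambda_D$ pointwise. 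Using Plancherel in the form $\|f\|_2^2=(2\pi)^{-d}\int|\hat f(t)|^2\,\md t$, the first estimate follows at once.

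For the second estimate the plan is to show that the symbol $h(t):=\widehat{\bK}(t)+\lambda_D$ vanishes to order $2k-1$ at the origin and to convert this into a pointwise bound $|h(t)|\lesssim |t|^{2k}$. By the identity $\partial^{\gamma}\widehat{\bK}(0)=i^{|\gamma|}I(\gamma)$ recorded just after Lemma~\ref{l:exactcovfourier}, the hypothesis $\lambda_D+I(0)=0$ gives $h(0)=0$, while $I(\gamma)=0$ for $1\le|\gamma|<2k$ gives $\partial^{\gamma}h(0)=\partial^{\gamma}\widehat{\bK}(0)=0$ for every such $\gamma$. Thus $h$ and all of its partial derivatives of order $\le 2k-1$ vanish at $0$.

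Since we are only given $\widehat{\bK}\in C^{2k}$, I would avoid the power series of Lemma~\ref{l:exactcovfourier} and instead apply Taylor's theorem with integral remainder to the one-variable function $\phi(s):=h(st)$ on $[0,1]$. As $\phi^{(j)}(0)=0$ for $0\le j\le 2k-1$, one gets $h(t)=\phi(1)=\frac{1}{(2k-1)!}\int_0^1(1-s)^{2k-1}\phi^{(2k)}(s)\,\md s$ with $\phi^{(2k)}(s)=\sum_{|\gamma|=2k}\frac{(2k)!}{\gamma!}t^{\gamma}\,\partial^{\gamma}\widehat{\bK}(st)$. Bounding each partial derivative by $\|D^{2k}\widehat{\bK}\|_\infty$, summing via the multinomial theorem so that $\sum_{|\gamma|=2k}\frac{(2k)!}{\gamma!}|t^{\gamma}|=(|t_1|+\dots+|t_d|)^{2k}$, using $(1-s)^{2k-1}\le 1$, and finally converting the $\ell^1$ norm to the Euclidean one via $(|t_1|+\dots+|t_d|)^{2k}\le d^{k}|t|^{2k}$, yields the uniform bound $|h(t)|\le \frac{d^k}{(2k-1)!}\|D^{2k}\widehat{\bK}\|_\infty\,|t|^{2k}$ valid for all $t\in\R^d$. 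Plugging this into the variance formula and recognizing $(2\pi)^{-d}\int|t|^{2k}|\hat f(t)|^2\,\md t=|f|_{H^k}^2$ gives the claim.

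The analytic interchanges are harmless here: since $|\widehat{\bK}|\le\|\bK\|<\infty$ and $f\in L^1\cap H^k\subset L^1\cap L^2$, the defining integral converges absolutely, and all the Taylor manipulations take place under a uniform bound. The only step demanding care is the remainder estimate: one must use the \emph{global} supremum $\|D^{2k}\widehat{\bK}\|_\infty$ so that the pointwise bound on $h(t)$ holds for large $t$ as well as near the origin, and keep track of the $\ell^1$-to-$\ell^2$ conversion, which is precisely where the factor $d^k$ is born. This constant bookkeeping is the main (though mild) obstacle.
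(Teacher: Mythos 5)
Your proof is correct and follows essentially the same route as the paper: both start from the Plancherel representation \eqref{eq:prelimcovformulafourier}, bound the symbol $\widehat{\bK}+\lambda_D$ by its supremum for the first inequality, and for the second apply Taylor's theorem with integral remainder to $\widehat{\bK}$ (the paper writes the multivariate expansion directly, you restrict to the line $s\mapsto h(st)$, which is the same computation), then use the vanishing of $I(\gamma)$ for $|\gamma|<2k$, the multinomial identity, and the $\ell^1$-to-$\ell^2$ conversion producing the factor $d^k$. Your bookkeeping of the remainder (with exponent $2k-1$ on $(1-s)$) is in fact slightly more careful than the paper's, and the resulting constant matches.
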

\begin{proof} From \eqref{eq:prelimcovformulafourier}, we get
\begin{align*}
\mbox{Var} (\cX(f)) &\le (\|\widehat{\bK}\|_{\infty}+\lambda_D)\frac{1}{(2\pi)^d}\int|\hat{f}|^2 \\
&=(\|\widehat{\bK}\|_{\infty}+\lambda_D)\|f\|_2^2.
\end{align*}

Now suppose $\lambda_D + I(0)=0$ and that $I(\gamma)=0$ for $|\gamma|<2k$ for some $k\ge 1$.  Then $Q_{j}^{\bK}(f,f)=0$ for $j<2k$. In \eqref{eq:prelimcovformulafourier}, use the Taylor expansion of $\widehat{\bK}$ to order $2k-1$ as
\[
\widehat{\bK}(z)=\widehat{\bK}(0)+\sum_{\gamma:|\gamma|\le 2k-1}\frac{\partial^{\gamma}\widehat{\bK}(0)}{\gamma!}z^{\gamma} + \sum_{\gamma:|\gamma|=2k}\frac{2k}{\gamma!}R_{\gamma}(z)z^{\gamma}\]
where the remainder term $R_{\gamma}(z)=\int_0^1(1-t)^{2k} \ \partial^{\gamma}\widehat{\bK}(tz) \md t$. As $\lambda_D + \widehat{\bK}(0)=0$ and $\partial^{\gamma}\widehat{\bK}(0) = 0$ (recall that $\widehat{\bK}(0)=I(0)$ and $\partial^{\gamma}\widehat{\bK}(0)$ is a constant times $I(\gamma)$ for $|\gamma|<2k$), \eqref{eq:prelimcovformulafourier} reduces to
\begin{align*}
\Var{\cX(f)} &\le \frac{2k}{(2\pi)^d}\sum_{|\gamma|=2k}\frac{1}{\gamma!}\int_{\R^d}\int_0^1(1-t)^{2k}|z^{\gamma}| \, \, |\hat{f}(z)|^2|\partial^{\gamma}\widehat{\bK}(tz)| \, \md t \, \md z.
\end{align*}
Bound $1-t$ by $1$ and $|\partial^{\gamma}\widehat{\bK}(tz)|$ by  $\|D^{2k}\widehat{\bK}\|_{\infty}$.  When $|z^{\gamma}|/\gamma!$ is summed over $|\gamma|=2k$, we get $(|z_1|+\ldots +|z_d|)^{2k}/(2k)!$ (multinomial expansion), which can be bounded by $d^k|z|^{2k}/(2k)!$ (where $|z|^2=|z_1|^2+\ldots +|z_d|^2$).  Putting these together, we  arrive at
\begin{align*}
\Var{\cX(f)} &\le \frac{d^k}{(2k-1)!}\|D^{2k}\widehat{\bK}\|_{\infty} \, |f|_{H^k}^2,
\end{align*}
which completes the proof.
\end{proof}


\subsection{Covariance Asymptotics}\label{s:asymptoticsofcovariance}
For $L\ge 1$, consider the scaled random measures $\cX_L$, defined as $\cX_L(A)=\cX(LA)$ for  sets $A\subseteq \R^d$ (we could of course define $\cX_L$ for $L>0$, but we are interested in the asymptotics as $L\to \infty$, and the uniformity of certain estimates breaks down when $L\to 0$). What to expect is made clear by Lemma~\ref{l:exactcov} or  Lemma~\ref{l:exactcovfourier} and we will now formalize this. 

Let us denote the quantities associated to $\cX_L$ by adding the subscript $L$. From \eqref{e:campbell2},  we derive that $\bK_L(\md z)=L^{2d}\bK(\md(Lz))$ (that is $\bK_L(A) = L^d\bK(LA)$ for $A \subset \R^d$) and hence by a straightforward change of variable,  we obtain that $I_L(\gamma) =  \int z^{\gamma} \bK_L(\md z) = L^{d-|\gamma|}I(\gamma)$. Thus $Q_m^{\bK_L}(f,g)=L^{d-m}Q_m^{\bK}(f,g)$.  For $f,g$ satisfying the assumptions of Lemma~\ref{l:exactcov}, we therefore get
\begin{equation}
\label{eq:Cov_expansion1} 
\cov(\cX_L(f),\cX_L(g))= \sum_{k=0}^{\infty}L^{d-2k}Q_{2k}^{\bK}(f,g).
\end{equation}
As $L\to \infty$, the first non-zero summand dominates and that determines the order of the covariance as well as the limiting covariance (of that order). Observe the surprising feature that if the covariance is $o(L^d)$, then it must be $O(L^{d-2})$ and so on, getting only alternate powers of $L$.   Later in Section \ref{s:examples},  we shall see an example of random measure showing that all these orders are achievable even if $ \langle f,  \partial^{\gamma}g\rangle$ are non-vanishing for all $\alpha \in \Z^d_+$.  

As it stands, the above proof only works for $f,g$ satisfying the hypotheses of Lemma~\ref{l:exactcov}, and those are too restrictive. For example, if $\lambda+I(0)=0$ and $I(\gamma)=0$ for $|\gamma|<2k$ for some $k\ge 1$, then the above proof approach suggests that $\cov(\cX(f),\cX(g))\sim L^{d-2k}Q_{2k}^{\bK}(f,g)$.  Considering that  $Q_{2k}^{\bK}(f,g)$ is well-defined if  $f,g\in H^{k}$, one may expect that this should be the only required condition in addition to $f,g\in L^1$.  Indeed, our variance bound in Lemma~\ref{lem:varianceupperbounds} was proved precisely for functions in $H^k\cap L^1$. 
\begin{proposition}
\label{l:covasymstatfindiffsobolev}
Let $\cX$ be a stationary random measure and let $\cX_L$ denote the scaled random measure defined by $\cX_L(A)=\cX(LA)$ for $L \geq 1$. Assume that $\int |z^{\gamma}||\bK|(\md z) < \infty$ for $|\gamma|\le 2k+1$. Let $f,g\in L^1\cap H^k$ functions on $\R^d$ for some $k\ge 0$ (with the convention that $H^0=L^2$).  Then,
\begin{enumerate}
\item $L^{-d} \cov(\cX_L(f),\cX_L(g))  \to Q_0^{\bK}(f,g)$ as $L\to \infty$.
\item If $\lambda_D + I(0) = 0$ and $I(\alpha)=0$ for  $1\le |\alpha|< 2k$, then
$L^{-d+2k} \cov(\cX_L(f),\cX_L(g)) \to Q_{2k}^{\bK}(f,g)$.
\item If $\lambda_D + I(0)= 0$ and $I(\alpha)=0$ for  $1\le |\alpha|\le 2k$, then $\cov(\cX_L(f),\cX_L(g))=O(L^{d-2k-1})$  if one of  $f$ or $g$ is in $C_b^{2k+1}$  (functions with bounded, continuous partial derivatives of order $2k+1$).
\end{enumerate}
\end{proposition}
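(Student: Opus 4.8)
The proof proposal is to establish all three parts from the single prelimit covariance identity \eqref{eq:prelimcovformulafourier}, which after rescaling reads
\begin{equation*}
\cov(\cX_L(f),\cX_L(g))=\frac{1}{(2\pi)^d}\int \hat{f}(t)\overline{\hat{g}(t)}\bigl(\widehat{\bK_L}(t)+\lambda_D\bigr)\,\md t,
\end{equation*}
where $\widehat{\bK_L}(t)=\widehat{\bK}(t/L)$ because $\bK_L(\md z)=L^{2d}\bK(\md(Lz))$. The hypothesis $\int|z^{\gamma}||\bK|(\md z)<\infty$ for $|\gamma|\le 2k+1$ guarantees that $\widehat{\bK}\in C^{2k+1}$, with all partial derivatives up to order $2k+1$ bounded, so one may Taylor-expand $\widehat{\bK}$ near the origin. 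The plan for parts (1) and (2) is to multiply by the appropriate power of $L$, substitute the Taylor expansion, and pass to the limit by dominated convergence; the plan for part (3) is to extract one more term of the expansion and bound the remainder.

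\textbf{Parts (1) and (2).} First I would normalize: since $I(\alpha)=0$ for $1\le|\alpha|<2k$ and $\lambda_D+I(0)=0$ in part (2) (while part (1) is the case $k=0$ with no vanishing assumption), the relation $\partial^{\gamma}\widehat{\bK}(0)=i^{|\gamma|}I(\gamma)$ shows that all Taylor coefficients of $\widehat{\bK}+\lambda_D$ of order $<2k$ vanish. Writing $\widehat{\bK}(t/L)+\lambda_D=\sum_{|\gamma|=2k}\frac{\partial^{\gamma}\widehat{\bK}(0)}{\gamma!}(t/L)^{\gamma}+o(|t/L|^{2k})$ via Taylor's theorem, I multiply by $L^{d-2k+?}$; precisely, since $I_L(\gamma)=L^{d-|\gamma|}I(\gamma)$, the integrand $L^{-d+2k}\hat{f}(t)\overline{\hat{g}(t)}(\widehat{\bK}(t/L)+\lambda_D)$ converges pointwise in $t$ to $\hat{f}(t)\overline{\hat{g}(t)}\sum_{|\gamma|=2k}\frac{\partial^{\gamma}\widehat{\bK}(0)}{\gamma!}t^{\gamma}$. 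By \eqref{e:q2mfourier} and \eqref{eq:fourierformulaforQ}, integrating this limit against $(2\pi)^{-d}$ yields exactly $Q_{2k}^{\bK}(f,g)$. To justify the interchange of limit and integral I would dominate: using the integral form of the Taylor remainder and the uniform bound on the order-$2k$ derivatives of $\widehat{\bK}$, one gets $|L^{-d+2k}(\widehat{\bK}(t/L)+\lambda_D)|\le C|t|^{2k}/(2k)!$ uniformly in $L\ge 1$, and $|t|^{2k}|\hat{f}(t)||\hat{g}(t)|$ is integrable because $f,g\in H^k$ (indeed $\int|t|^{2k}|\hat{f}|^2<\infty$ and similarly for $g$, so Cauchy--Schwarz gives integrability of the product). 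Dominated convergence then delivers the claimed limit.

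\textbf{Part (3).} Here I carry the expansion one order further. Under the stronger vanishing hypothesis $I(\alpha)=0$ for $1\le|\alpha|\le 2k$, the Taylor coefficients of $\widehat{\bK}+\lambda_D$ up to and including order $2k$ all vanish (the order-$2k$ terms now drop out too, and the order-$(2k+1)$ terms vanish by the symmetry/evenness of $\widehat{\bK}$). Thus the leading surviving contribution is the order-$(2k+1)$ Taylor remainder, giving $|\widehat{\bK}(t/L)+\lambda_D|\le C\,\|D^{2k+1}\widehat{\bK}\|_{\infty}\,|t/L|^{2k+1}$, so that
\begin{equation*}
|\cov(\cX_L(f),\cX_L(g))|\le \frac{C}{(2\pi)^d}\,L^{-2k-1}\int |t|^{2k+1}|\hat{f}(t)|\,|\hat{g}(t)|\,\md t,
\end{equation*}
and after restoring the $L^{2d}$ from $\bK_L$ this is $O(L^{d-2k-1})$. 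The only point needing care is the finiteness of $\int|t|^{2k+1}|\hat{f}||\hat{g}|$: with $f,g\in H^k$ one controls $\int|t|^{2k}|\hat{f}||\hat{g}|$ by Cauchy--Schwarz, and the extra factor $|t|$ is absorbed by the hypothesis that one of $f,g$ lies in $C_b^{2k+1}$. I would make this precise by splitting the region $|t|\le 1$ (where the extra $|t|$ is harmless) from $|t|>1$, and on the latter writing $|t|^{2k+1}|\hat{f}||\hat{g}|\le |t|^{2k}|\hat f|\cdot |t|\,|\hat g|$ and using that $|t|^{2k+1}\hat g$ is bounded (from $g\in C_b^{2k+1}$ combined with $g\in L^1$, which controls the $|t|^{2k+1}$ growth of derivatives via $\widehat{\partial^{\gamma}g}(t)=(-it)^{\gamma}\hat g(t)$), leaving a factor $|t|^{2k}|\hat f(t)|\in L^1$ bound again by Cauchy--Schwarz against $|t|^{2k}|\hat g|$ or directly. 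The main obstacle is precisely managing this borderline integrability of the order-$(2k+1)$ frequency moment, which is why part (3) needs the extra $C_b^{2k+1}$ regularity on one test function rather than mere membership in $H^k$.
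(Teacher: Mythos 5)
Your treatment of parts (1) and (2) is correct, but it follows a genuinely different route from the paper. The paper first proves all three statements in the spatial domain for $g\in C_b^{2k+1}$, via the Taylor expansion of $g(x-z)-g(x)$ with integral remainder (leading to \eqref{eq:covwithremainderterm}), and only then extends (1)–(2) to $f,g\in L^1\cap H^k$ by approximating with $C_c^\infty$ functions and invoking the variance bound of Lemma~\ref{lem:varianceupperbounds}. You instead Taylor-expand $\widehat{\bK}$ about the origin in \eqref{eq:prelimcovformulafourier} and pass to the limit by dominated convergence; since the uniform remainder bound $L^{2k}\,|\widehat{\bK}(t/L)+\lambda_D|\le C|t|^{2k}$ holds globally (all order-$2k$ derivatives of $\widehat{\bK}$ are bounded by the moment hypothesis on $|\bK|$) and $\int |t|^{2k}|\hat f||\hat g|\le (2\pi)^d|f|_{H^k}|g|_{H^k}<\infty$ by Cauchy–Schwarz, this proves (1)–(2) directly for $f,g\in L^1\cap H^k$ in one shot, with no density/approximation step. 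This is precisely the alternative the paper alludes to in the remark following its proof, and it is arguably cleaner for these two parts. One bookkeeping slip: $\widehat{\bK_L}(t)=L^d\widehat{\bK}(t/L)$ (not $\widehat{\bK}(t/L)$), and correspondingly the diagonal intensity of $\cX_L$ is $L^d\lambda_D$; your final powers of $L$ come out right only once this $L^d$ is tracked consistently (your phrase ``restoring the $L^{2d}$'' should be ``restoring the $L^{d}$'').

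Part (3), however, has a genuine gap. Your argument hinges on the claim that $|t|^{2k+1}\hat g(t)$ is bounded because $g\in C_b^{2k+1}\cap L^1$. That implication is false: boundedness of $t^{\gamma}\hat g(t)$ for $|\gamma|=2k+1$ would follow from $\partial^{\gamma}g\in L^1$, whereas $C_b^{2k+1}$ only gives $\partial^{\gamma}g\in L^\infty$, and the Fourier transform of a bounded non-integrable function need not be bounded. Concretely (take $d=1$, $k=0$): letting $\hat g$ be a sum of bumps of heights $c_m=m^{-2}$, widths $\delta_m=m^{-4}$, centered at $t_m=m^{3}$, one gets $g\in C_b^{1}\cap L^1\cap L^2$ while $t_m\,\hat g(t_m)\asymp c_m t_m=m\to\infty$. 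Consequently the dominating integral $\int|t|^{2k+1}|\hat f(t)||\hat g(t)|\,\md t$ can be infinite under the stated hypotheses (Cauchy–Schwarz would require $g\in H^{k+1}$, or $f,g\in H^{k+1/2}$, neither of which is assumed), and your frequency-side bound for part (3) cannot be completed. This is exactly why the paper proves part (3) in the spatial domain: there the order-$(2k+1)$ remainder is bounded by $\|\partial^{\alpha}g\|_{\infty}\,\|f\|_{L^1}\int|z^{\alpha}|\,|\bK|(\md z)$, an $L^\infty$--$L^1$ pairing that is precisely what the $C_b^{2k+1}$ hypothesis supports, whereas the Plancherel route forces $L^2$--$L^2$ pairings that the hypotheses do not furnish at order $2k+1$. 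To repair your proof, replace the frequency-domain argument in part (3) by the spatial Taylor expansion of $g(x-z)-g(x)$ to order $2k+1$, as in \eqref{eq:covwithremainderterm}.
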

We prove the Lemma in two steps.  First we give a proof that works when at least one of $f$ or $g$ is in $C_b^{2k+1}$. Next, we use the fact that $C_c^{\infty}$ is dense in $H^k$, and the variance bound from Lemma~\ref{lem:varianceupperbounds} to extend the result to $f,g\in L^1\cap H^k$. This approximation strategy is adapted from Rider and Vir\'{a}g~\cite{Rider2007} who did a similar thing for the Ginibre ensemble with $k=1$.

\begin{proof}[Proof of Proposition~\ref{l:covasymstatfindiffsobolev}] 

\noindent \textsc{\underline{STEP 1:} Proof when one of $f,g$ is in $C_b^{2k+1}$.} 

Assume that $g\in C_b^{2k+1}$ and consider the Taylor expansion
\[
g(x-z)-g(x) = \sum_{\alpha\in \Z_+^d: 1\le |\alpha|\le 2k}\frac{(-1)^{|\alpha|}}{\alpha!}\partial^{\alpha}g(x) z^{\alpha} \ + \ \sum_{\alpha\in \Z_+^d:  |\alpha|=2k+1}\frac{(-1)^{|\alpha|}|\alpha|}{\alpha!}R_{\alpha}^g(x,z)z^{\alpha}
\] 
with remainder term $R_{\alpha}^g(x,z)=\int_0^1(1-t)^{2k+1}\partial^{\alpha}g(x-tz) \md t$. From this it is clear that $|R_{\alpha}^g(x,z)|\le \|\partial^{\alpha}g\|_{\infty}$ and that $R_{\alpha}^g$ is continuous in $x,z$ (hence the integrals below are legitimate). Apply formula \eqref{eq:covariancefirtformulaprelim} to $\cX_L$ and observe that $\bK_L(\md z) = L^{2d}\bK(\md(Lz))$ to get  (recall that $I(\alpha)=0$ for odd $|\alpha|$)
\begin{align}
 \cov(\cX_L(f),\cX_L(g)) =& L^d(\lambda_D + I(0))\<f,g\>+\sum_{1\le |\alpha|\le 2k, |\alpha|\in 2\N}L^{d-|\alpha|}\frac{1}{\alpha!}I(\alpha)\<f,\partial^{\alpha}g\> \nonumber \\
 &-L^{d-2k-1}\sum_{|\alpha|=2k+1}\frac{(2k+1)}{\alpha!}\int \!\! \int f(x) R_{\alpha}^g\left(x,\frac{z}{L}\right)z^{\alpha} \bK(\md z) \md x. \label{eq:covwithremainderterm}
\end{align}
For $\alpha\in \Z_+^d$ with $|\alpha|=2k+1$,
\begin{align*}
\big| \int \!\! \int f(x) R_{\alpha}^g\left(x,\frac{z}{L}\right)z^{\alpha}  \bK(\md z) \md x \big|&\le \|\partial^{\alpha}g\|_{\infty} \|f\|_{L^1} \int|z^{\alpha}| |\bK|(\md z).
\end{align*}
This bound is independent of $L$, showing that the third term on the right in \eqref{eq:covwithremainderterm} is $O(L^{d-2k-1})$. The coefficient of $L^{d-2j}$ for $0\le j\le k$ is precisely $Q_{2j}^{\bK}(f,g)$, by \eqref{eq:formulaforQ}. Thus,
\[
\cov(\cX_L(f),\cX_L(g)) = \sum_{j=0}^kL^{d-2j}Q_{2j}^{\bK}(f,g) \; + \; O(L^{d-2k-1}).
\]
All three statements of the proposition follow immediately when one of $f$ or $g$ is in $C_b^{2k+1}$. It remains to prove the second statement for more general $f,g$. \\

\noindent \textsc{\underline{STEP 2:} Proof for $f,g\in L^1\cap H^k$.} \\

It is well-known (Theorem~2.1.3 in \cite{kesavan}) that $C_c^{\infty}$ (and hence $C_b^{2k+1}$) is dense in $H^k$. Given $f,g$, there exist $f_n,g_n\in C_c^{\infty}$ such that $\|f_n-f\|_{H^k}\to 0$ and $\|g_n-g\|_{H^k}\to 0$. 

Consider the first two cases: that either $I(0)\not=-\lambda$ or $I(\gamma)\not=0$ for some $|\gamma| \le  2k$. We have proved that for fixed $n$, as $L\to \infty$,
\[
L^{-d+2k}\cov(\cX_L(f_n),\cX_L(g_n))\to Q_{2k}^{\bK}(f_n,g_n).
\]
 Further, $Q_{2k}^{\bK}(f_n,g_n)\to Q_{2k}^{\bK}(f,g)$ as $n\to \infty$ as seen from  \eqref{eq:formulaforQ}. Now,
\begin{align*}
\;\; &|\cov(\cX_L(f),\cX_L(g))-\cov(\cX_L(f_n),\cX_L(g_n))|\\
\;\; &\le |\cov(\cX_L(f),\cX_L(g-g_n))|+|\cov(\cX_L(f-f_n),\cX_L(g_n))| \\
\;\; &\le \sqrt{\Var{\cX_L(f)}}\sqrt{\Var{\cX_L(g-g_n)}}+\sqrt{\Var{\cX_L(g_n)}}\sqrt{\Var{\cX_L(f-f_n)}} \\
&\le CL^{d-2k}\{|f|_{H^k}|g-g_n|_{H^k}+|g_n|_{H^k}|f-f_n|_{H^k}\}.
\end{align*}
In the last line, we invoked Lemma~\ref{lem:varianceupperbounds}, using the observation that  $\widehat{\bK}_L(t)=L^d\widehat{\bK}(t/L)$ and consequently $\|D^{2k}\widehat{\bK}_L\|_{\infty}\le L^{d-2k}\|D^{2k}\widehat{\bK}\|_{\infty}$. Here we also use that the integrability condition on $\bK$ implies the smoothness of $\widehat{\bK}$ as required in Lemma~\ref{lem:varianceupperbounds}. Clearly $|f-f_n|_{H^k}\le \|f-f_n\|_{H^k}\to 0$ as $n\to \infty$, and similarly $|g_n-g|_{H^k} \to 0$. In particular, $|g_n|_{H^k}\to |g|_{H^k}$. Thus, letting $L\to \infty$ and then $n\to \infty$,  we see that $L^{-d+2k}\cov(\cX_L(f),\cX_L(g))\to Q_{2k}^{\bK}(f,g)$.  
\end{proof}
The proof in STEP 1 above can be regarded as a modification of the proof of Lemma~\ref{l:exactcov}. Instead we could have modified the proof of Lemma~\ref{l:exactcovfourier} using the  Taylor expansion of $\widehat{\bK}$ to order $2k$ (similar to what we did in the proof of Lemma~\ref{lem:varianceupperbounds}) to get an almost identical result. Covariance asymptotics as above for analytic functions has been obtained recently in \cite[Theorem 5.1]{Mastrilli2024} by more carefully analyzing \eqref{eq:prelimcovformulafourier}.

\subsection{Examples}
\label{s:examples}
We investigate the asymptotics of $\cov(\cX_L(f),\cX_L(g))$ explicitly for some specific random measures and simple point processes. Though the variance asymptotics for the simple point processes are derived elsewhere by the specific structure of the point process under consideration, here we show how they follow from our general result (Proposition \ref{l:covasymstatfindiffsobolev}) and also we obtain covariance asymptotics without additional work.  We also provide a simple example of a random measure that has arbitrary polynomial decay of covariance but it is more challenging to find an example of a point process with arbitrary decay of covariance as it is not easy to guarantee existence of stationary point processes with a specified truncated correlation function; see \cite{Ambartzumian1991,Caglioti2006}. Nevertheless, using explicit analysis of structure factor (i.e., $\lambda_D + \widehat{\bK}(\cdot)$), point processes with arbitrary polynomial decay of structure factor (or equivalently covariance asymptotics) has been indicated in \cite[Section III.B]{Gabrielli2008}. 

We will not define the point processes formally but refer to \cite{Ben09,Last2017,Baccelli2021,Coste2021} for definitions and also more examples of point processes. We also describe higher-order truncated correlations for determinantal process and zeros of Gaussian entire function in Section \ref{s:highertruncorr}. 
\begin{enumerate}
\item {\em The stationary Poisson process.} See \cite{Last2017} for definition.  The correlations are given by\\ $\rho_k(x_1,\ldots ,x_k)=\lambda^k$ for some $\lambda>0$.  Indeed this characterizes the stationary Poisson point process. In this case,  $\kappa(z)=0$ and hence $I(0)=0$. Thus,  $\cov(\cX_L(f),\cX_L(g))\sim \lambda L^d\langle f,g\rangle$.  That is,  the variance is volume order and in the limit we get white noise.  Though this is well-known,  we have mentioned it here for completeness.

\item {\em The Ginibre example.}  See \cite[Section 6.4]{Ben09} for definition.  We just recall that this is a point process on $\R^2 \cong \mC$ whose correlations are given by $\rho_k(z_1,\ldots ,z_k)=\det(K(z_i,z_j))_{1\le i,j\le k}$ where $K(z,w)=\frac{1}{\pi}e^{-\frac12|z|^2-\frac12|w|^2+z\overline{w}}$.  From the first two correlations we see that $\lambda=1/\pi$ and $\kappa(z)=-\frac{1}{\pi^2}e^{- |z|^2}$ and hence $I(0)=-\lambda$ but $I(2)\not=0$.  In fact, (since the process is isotropic) using Proposition~\ref{l:covasymstatfindiffsobolev} and Corollary \ref{cor:varianceunderinvariance},  we have that for $f, g \in L^1 \cap H^1$,  
\[ \lim_{L \to \infty} \cov(\cX_L(f),\cX_L(g)) =   -  \frac{\pi J(3)}{2}\langle \nabla f , \nabla g\rangle = \frac{1}{4\pi} \langle \nabla f , \nabla g\rangle 
\]
where we have used
\[
\sigma_2=2\pi,   \quad J(3) = -\frac{1}{\pi^2}\int_{\mathbb R} r^3e^{-r^2}dr=-\frac{1}{2\pi^2}.
\]
This was first derived in Rider and Vir\'{a}g~\cite[Lemmas 12 and 14]{Rider2007}. 

\item {\em Zeros of the G.E.F. }  See \cite[Chapter 2]{Ben09} for more details on this point process.  This is another point process on $\R^2 \cong \mC$.  Let $F(z)=\sum_{n\ge 0}a_n\frac{z^n}{\sqrt{n!}}$ where $a_n$ are i.i.d. standard complex Gaussians, then $F$ is a Gaussian analytic function on the whole plane with covariance $\E[F(z)\overline{F(w)}]=e^{z\overline{w}}$. The zeros of $F$ form a stationary point process. Well-known Kac-Rice formulas allow to express the correlation functions of the zero set in terms of the covariance kernel. In particular, it has intensity $\frac{1}{\pi}$ and $\rho_2(z,w)-\rho_1(z)\rho_1(w)=\frac{1}{4\pi^2}\Delta_z\Delta_w \sum_{m\ge 1}\frac{1}{4m^2}e^{-m|z-w|^2}$. After some computation, 
\begin{align}
\kappa(z)
&=-\frac{1}{\pi^2} \;  \sum_{m\ge 1}e^{- m|z|^2} (2-4m|z|^2+m^2|z|^4). \label{eq:kappagaf}
\end{align}
Integration by parts (transfer the Laplacians from $\kappa$ to the test functions), it is clear that $I(2\gamma)=0$ for $|\gamma|\le 1$. Hence using Proposition~\ref{l:covasymstatfindiffsobolev} and Corollary \ref{cor:varianceunderinvariance} with $p=2$,  we have that for $f, g \in L^1 \cap H^2$,  
\begin{align*} \lim_{L \to \infty} L^2\cov(\cX_L(f),\cX_L(g)) &= \frac{2\pi J(5)}{2! \times 2^2 \times 2(2+2)}\times \langle \Delta f,\Delta g\rangle   \\
&=\frac{\zeta(3)}{16\pi} \langle \Delta f,\Delta g\rangle.
\end{align*}
In the last line, we used \eqref{eq:kappagaf} to compute $J(5)$ explicitly. This expression was derived by Forrester and Honner~\cite{Forrester1999}. Also see \cite{sodintsirelson1}  and \cite{NazarovSodin11} for more on variance asymptotics for various statistics (including indicator statistics) of zeros of G.E.F.. 

Later in Example \ref{ex:GEFzerostrunccorr}, we shall derive a formula for truncated correlation functions for general Gaussian analytic functions.

\item {\em Random measures with fast decay of covariance.} For any $p \geq 1$, we construct a random measure $\X$ that is absolutely continuous w.r.t. to Lebesgue measure as in Section \ref{s:corr_atomic} such that $I(\alpha) = 0$ for all $0 \leq |\alpha| < 2p$ i.e., $L^{-d+2k}\Var{\X_L(f)} \to 0$ as $L \to \infty$ for $f$ smooth and $k < p$. In other words, there exist random measures with arbitrary fast decay of covariance.  

Let $\varphi : \R^d \to \R$ be such that $\int \varphi = 0$, $\int |\varphi| = 1$ and $|\hat{\varphi}(x)| = o(\|x\|^p)$ as $x \to 0$ for some $p \geq 1$. For example, we can take $\varphi$ to be the $p$-fold convolution of $x \mapsto \prod_{i=1}^d \left( 1_{[0,\frac{1}{2}]}(x_i) - 1_{[\frac{1}{2},1]}(x_i) \right)$. Let $Y(x), x \in \R^d$ be a symmetric stationary stochastic process with a finite range of dependence and $|Y(x)| \leq 1$. Set $\sigma_Y(x) = \cov(Y(x),Y(0)) = \E[Y(0)Y(x)], x \in \R^d$ to be the {\em covariance kernel} of $Y$.  Define the random measure
$$ \X (\cdot) = \int_{\cdot} X(x) \, \md x \, \, \text{where} \, \, X(x) = 1 + (Y * \varphi)(x) = 1 + \int Y(z) \, \varphi(x-z) \, \md z.$$
 Setting $Z := Y * \varphi$, observing that $X = 1 + Z$, $\sigma_X(z) = \sigma_Z(z) := \cov(Z(0),Z(z)) = \E[Z(0)Z(z)]$ as it can be verified that $\E[Z(\cdot)] \equiv 0$. Since $|Z| \leq 1$, $X \geq 0$
 
Clearly $\X$ is a stationary random measure as $X$ is a non-negative stationary random field. Thus $\rho_1 \equiv \E[X(0)] = 1$, $\rho_2(x,y) = \E[X(x)X(y)] =: \sigma_X(x-y) + 1$ and the truncated correlation density is $\kappa = \sigma_X$.  We can now compute that $\hat{\sigma}_Z(z) = \hat{\sigma}_Y(z)|\hat{\varphi}(z)|^2$.  From the above observations, we have that $\X$ is a stationary random measure such that $\hat{\kappa}(z) = \hat{\sigma}_Y(z)|\hat{\varphi}(z)|^2$. Our assumption on decay of $\hat{\varphi}$ yields that $|\hat{\kappa}(z)| = o(\|z\|^{2p})$ as $x \to 0$ and so from Lemma \ref{l:exactcovfourier}, we have that $I(\alpha) = 0$ for  all $0 \leq |\alpha| < 2p$ as required.

\item {\em Perturbed lattices.} Though this example does not satisfy the common assumption on integrable truncated two-point correlation measure, still many results can be proven; see \cite{Gacs1975,Gabrielli2004,Peres2014,Yakir21} for example. Let $\xi_z, z \in \mZ^d$ be i.i.d. random vectors in $\mR^d$ and $U$ be a uniform random vector in $[0,1]^d$. Then we define $\cX = \{z + \xi_z : z \in \mZ^d\}$ and $\cX_s = \{z + \xi_z + U : z \in \mZ^d\}$ as the perturbed lattice and stationarized perturbed lattice respectively. Though $\cX_s$ is stationary (but $\cX$ is not), it can be shown that it does not have an integrable truncated two-point correlation measure. Given the structure of the point process, explicit formulas are known for $\widehat{\bK}$; see \cite{Gabrielli2004} and from which, one can conclude the variance of smooth statistics of $\cX$  (or $\cX_s$) are at least of order $L^{d-2}$. In the case, when $\xi_z$'s are Gaussian this has been proven for $\cX$ but it fails for $\cX_s$; see \cite[Theorem 4]{Yakir21}. 

\item {\em Point process with  fast decay of variance - Multiple perturbations of a lattice.} As mentioned above perturbed lattices cannot achieve variance growth of order less than $L^{d-2}$. However, \cite{Gabrielli2008} and \cite{LRY2024} provided examples achieving smaller variance growth rates by using multiple perturbations of each lattice point that preserve the center of mass and higher moments. Specific constructions achieving variance growth of order  $L^{d-4}$ can be found in  \cite{Gabrielli2008} and \cite[Pg 5]{sodintsirelson1} (the example of third toy model) with  the former also indicating examples achieving variance growth of orders $L^{d-6}, L^{d-8}$ and so on. 

\item {\em Signed random measures.} It is also of interest to consider signed random measures and it arises naturally in many examples; for example, see \cite{BYY2019}.  We remark that it is possible to extend some of our results to many natural examples of signed random measures provided they can be written as difference of two (non-negative) random measures and applying our results individually to the two random measures. One needs to be careful as $\int f \cX(\md x)$ is undefined in general for a signed random measure $\cX$ on $\mR^d$ when both the positive and negative components have infinite total measure. 
\end{enumerate}

\bigskip

\section{Covariance of indicator statistics}
\label{s:covindicator}

In the previous section,  we derived exact formulas as well as asymptotics for $\cov(\cX(f), \cX(g))$ when $f,g$ satisfy certain smoothness or analyticity conditions.   The question remains as to what is the behaviour when $f,g$ are less regular than those considered in the previous section.   In Section \ref{s:covasymptotindicator},  we first consider covariance asymptotics for the case when $f,g$ are indicator functions of bounded open sets with $C^1$-smooth boundaries.  Then in Section \ref{s:varlb},  we show surface-order variance lower bound for $\Var{\cX_L(W)}$ for sets $W$ with positive Lebesgue measure and when $\cX$ has a non-trivial atomic part i.e., $\lambda_D > 0$.  

\subsection{Covariance asymptotics}
\label{s:covasymptotindicator}

\begin{theorem}
\label{t:covindfn}
Let $\cX$ be a stationary random measure whose truncated correlation measure $\bK$ has finite total variation. Assume that $A$ and $B$ are bounded open sets with $C^1-$smooth boundaries.  Let $\cX_L$ be the scaled random measure defined by $\cX_L(A) := \cX(LA)$,  $L \geq 1$.  Then
\begin{enumerate}
\item $L^{-d}\cov(\cX_L(A),\cX_L(B)) \to Q_0^{\bK}(f,g) = (\lambda_D + \int \bK ) \, |A \cap B|, $ where $| \cdot |$ is the Lebesgue measure. 
\item If $\lambda_D + \int \bK = 0$ and further $\int |z||\bK|(\md z)<\infty$, then it holds that
\begin{equation}
\label{eq:covasymp_surface}
L^{-d+1}\cov(\cX_L(A),\cX_L(B))  \to  -\frac12 \int_{\partial A\cap \partial B} \eps(x) \left[\int |z\cdot N_A(x)| \ \bK(\md z) \, \right] \, \sigma_A(\md x),
\end{equation}
where $N_A(x)$ is the exterior normal to $A$ at $x$ and $\sigma_A$ is the surface area measure of $\partial A$. Further, $\eps(x) \in \{+1,0,-1\}$ is defined as follows: $\eps(x) = +1$ (resp. $-1$) if $N_A(x)$ is the exterior (resp. interior) normal to $B$ at $x$, and $0$ otherwise. 
\end{enumerate}
\end{theorem}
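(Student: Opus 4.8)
The plan is to reduce everything to the local geometry of the cross-covariogram $V(z) := |A\cap(B+z)| = \int 1_A(x)\,1_B(x-z)\,\md x$. Applying the exact formula \eqref{eq:covariancefirtformulaprelim} to $\cX_L$ (whose parameters are $\lambda_{D,L}=L^d\lambda_D$ and $\bK_L$), and using $\bK_L(\md z)=L^{2d}\bK(\md(Lz))$ so that $\int h(z)\,\bK_L(\md z)=L^d\int h(w/L)\,\bK(\md w)$ and $\int\bK_L=L^d\int\bK$, I obtain
\begin{equation*}
\cov(\cX_L(1_A),\cX_L(1_B)) = L^d\,Q_0^{\bK}(1_A,1_B) + L^d\!\int\big[V(w/L)-V(0)\big]\,\bK(\md w),
\end{equation*}
where $Q_0^{\bK}(1_A,1_B)=(\lambda_D+\int\bK)\,|A\cap B|$. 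Since $z\mapsto 1_B(\cdot-z)$ is $L^1$-continuous, $V$ is continuous with $V(0)=|A\cap B|$ and $|V(w/L)-V(0)|\le 2\min(|A|,|B|)$; as $\|\bK\|<\infty$, dominated convergence kills the integral and gives statement (1). For statement (2) the first term vanishes, and using the symmetry of $\bK$ I rewrite
\begin{equation*}
L^{-d+1}\cov(\cX_L(1_A),\cX_L(1_B)) = \int \frac{L}{2}\big[V(w/L)+V(-w/L)-2V(0)\big]\,\bK(\md w).
\end{equation*}

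Everything now rests on the one-sided symmetric derivative of $V$ at the origin: I claim that for each fixed $w$,
\begin{equation*}
\frac{1}{2t}\big[2V(0)-V(tw)-V(-tw)\big]\xrightarrow{\,t\to0^+\,}\frac12\int_{\partial A\cap\partial B}\eps(x)\,|w\cdot N_A(x)|\,\sigma_A(\md x).
\end{equation*}
Writing $2V(0)-V(tw)-V(-tw)=\int 1_A(x)\big[2\cdot1_B(x)-1_B(x-tw)-1_B(x+tw)\big]\,\md x$, the bracket is the centred second difference of $1_B$, supported in a shell of width $O(t)$ about $\partial B$. Classifying points of $\partial B$ by their position relative to the open set $A$: points outside $\bar A$ contribute nothing; for points of $\partial B$ in the interior of $A$, integrating out the normal coordinate in a $C^1$ chart $B=\{x_d<\beta(x')\}$ produces $2\beta(x')-\beta(x'-tw')-\beta(x'+tw')$, which is $o(t)$ uniformly by continuity of $\nabla\beta$. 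Hence only a shrinking neighborhood of $\partial A\cap\partial B$ survives, which I isolate by the usual $\delta$-neighborhood argument (send $t\to0$, then $\delta\to0$).

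The surviving contribution comes from $\partial A\cap\partial B$. At a transversal crossing ($N_A\ne\pm N_B$) the contact set is $(d-2)$-dimensional and $\sigma_A$-null, consistent with $\eps=0$ there, and the contribution is lower order. At a tangential contact point I take coordinates with $N_A=e_d$ and write $A=\{x_d<\alpha(x')\}$ and $B=\{x_d<\beta(x')\}$ (case $\eps=+1$) or $B=\{x_d>\beta(x')\}$ (case $\eps=-1$), where $\alpha=\beta$ and $\nabla\alpha=\nabla\beta$ on the contact set. Setting $\mu:=w_d-\nabla\alpha(x')\cdot w'$, so that $|\mu|=|w\cdot N_A(x)|\sqrt{1+|\nabla\alpha|^2}$, a short computation with minima of affine functions gives a boundary layer of signed thickness $\eps(x)\,t\,|\mu|$, hence
\begin{equation*}
2V(0)-V(tw)-V(-tw)=t\int_{\partial A\cap\partial B}\eps(x)\,|w\cdot N_A(x)|\,\sigma_A(\md x)+o(t),
\end{equation*}
using $\sigma_A(\md x)=\sqrt{1+|\nabla\alpha|^2}\,\md x'$. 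This is the claim; the factor $\tfrac12$ and the sign $\eps$ emerge exactly as in the one-dimensional check $A=B=(0,1)$, where $V(z)=1-|z|$.

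Finally I pass the limit through the outer integral. As $A,B$ have finite perimeter, $|V(z)-V(0)|\le|B\triangle(B+z)|\le\mathrm{Per}(B)\,|z|$, so the integrand $\tfrac{L}{2}[V(w/L)+V(-w/L)-2V(0)]$ is dominated by $\mathrm{Per}(B)\,|w|$; the hypothesis $\int|z|\,|\bK|(\md z)<\infty$ then licenses dominated convergence, and Fubini rearranges the result into the stated form \eqref{eq:covasymp_surface}. The main obstacle is the geometric lemma of the third paragraph: showing that only $\partial A\cap\partial B$ contributes at order $t$ and extracting the exact constant under mere $C^1$ regularity. The delicate points are the uniform $o(t)$ control of the interior contribution across the transition into the contact region, and justifying the layer computation at $\sigma_A$-a.e.\ tangential contact point (where the two $C^1$ surfaces need not coincide on an open set) via a Lebesgue-density argument on the contact set.
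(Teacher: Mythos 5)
Your proposal is correct and follows essentially the same route as the paper: the paper's key quantity $h_s(z)$ is exactly your symmetric second difference $\tfrac1s\left[2V(0)-V(sz)-V(-sz)\right]$ of the cross-covariogram, and the paper likewise computes its pointwise limit by a chart-based fiber calculation that isolates the tangential contact set $\{\phi=\psi,\ \nabla\phi=\nabla\psi\}$, discards the transversal contact set as a $(d-2)$-dimensional null set, and concludes via dominated convergence under $\int |z|\,|\bK|(\md z)<\infty$. The delicate points you flag at the end are handled in the paper by a partition of unity together with fiber-wise dominated convergence in the chart variable (no Lebesgue-density argument is needed), so there is no genuine gap.
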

As the proof will show, $\eps = \pm 1$ a.e. on $\partial A \cap \partial B$ and hence the role of $A$ and $B$ can be interchanged in the RHS of \eqref{eq:covasymp_surface}. The first statement in the above theorem is same as that of Proposition~\ref{l:covasymstatfindiffsobolev}(i) for $f,g \in H^0$ but without requiring any further differentiability.   This can be derived easily from \eqref{eq:covariancefirtformulaprelim}.  But the second statement is non-trivial and cannot be deduced from the results of the previous section. Recall that precursor results of this nature for point processes due to Lebowitz, Wieand, Buckley and Sodin were discussed in the introduction. A special case of the second statement if when $A =B$ i.e., variance asymptotics. Note that even in the case of variance, the positivity of the limit is not claimed in the statement.  Our upcoming Proposition \ref{prop:var_lb_stat_rand_meas} will guarantee this to be the right order of asymptotics in the case of stationary random measures with a non-trivial atomic part. We refer the reader to discussion below Proposition \ref{prop:var_lb_stat_rand_meas} and in particular, \eqref{e:varasymp_surface}. We expect that the above theorem should hold for more general domains, say Lipschitz domains. For example, variance asymptotics for certain determinantal point processes are proven for Cacciopoli sets \cite{Levi2023}. See also \cite[Section III]{Lebowitz1983}.

Again as in \eqref{eq:covarianceformularot}, the limiting covariance formula has a neat product form when $\bK$ is isotropic. Assuming that $\bK$ is isotropic and abusing notation, we write $\bK(\md z) = r^{d-1}\bK(\md  r) \sigma(\md w)$ with $z = rw$ for $r > 0$ and $w \in \mS^{d-1}$. Then, the conclusion of Theorem \ref{t:covindfn}(2) can be written as
\begin{align*}
 L^{-d+1}\cov(\cX_L(A),\cX_L(B))  & \to  -\frac12 \left[ \int_{\partial A\cap \partial B} \eps(x)   \int_{\mS^{d-1}} |w \cdot N_A(x)| \, \sigma(\md w) \, \sigma_A(\md x) \right] \times  \int_0^{\infty} r^d \, \bK(\md r) \\
& = c_d \, \int_{\partial A\cap \partial B} \eps(x) \sigma_A(\md x)  \times \int_0^{\infty} r^d \, \bK(\md r) , 
 \end{align*}
where 
$$ c_d := \frac{1}{2} \, \int_{\mS^{d-1}} |w_1| \, \sigma(\md w) = \frac{\sigma(\mS^{d-1})\Gamma(\frac{d}{2})}{2\sqrt{\pi}\Gamma(\frac{d+1}{2})}.$$ 
%
%
%
Observe that the limiting covariance is zero if $A$ and $B$ do not share a boundary, even if one of them is contained in the other. On the other hand, if they share a part of the boundary and their interiors are disjoint, then the covariance is negative and proportional to the length of the shared boundary. But if one is contained in the other and they share a part of the boundary, then the covariance is positive. 
\begin{proof}[Proof of Theorem~\ref{t:covindfn}]
As explained above the first statement is trivial and hence we prove the second alone. Let $f=1_A$ and $g=1_B$ where $A,B$ are bounded open sets. Using \eqref{eq:covariancefirtformulaprelim} and that $\cX_L(f) = \cX(f_L)$ with $f_L(\cdot) = f(L^{-1}\cdot)$, we have the formula
\begin{align*}
L^{-d+1}\cov(\cX_L(f),\cX_L(g)) &= -\frac12 L\int\!\!\int (f(x+\frac{z}{L})-f(x))(g(x+\frac{z}{L})-g(x)) \, \bK(\md z) \, \md x \  \\
&= -\frac12 \int h_{\frac{1}{L}}(z)\bK(\md z)
\end{align*}
where $h_{s}(z)= \frac{1}{s}\int_{\R^d} (1_{(A-sz)\setminus A}-1_{A\setminus (A-sz)})(1_{(B-sz)\setminus B}-1_{B\setminus (B-sz)})$ for $s>0$. 

We claim that for all $z \in \R^d$, as $s\downarrow 0$, 
\begin{align}\label{eq:convofhstohzero}
h_s(z)\to h_0(z):= \int_{\partial A\cap \partial B}\eps(x) |z\cdot N_A(x)|  \ \sigma_A(\md x) = \int_{\partial A\cap \partial B}\eps(x) |z\cdot N_B(x)|  \ \sigma_B(\md x).
\end{align}
Further, since $|(A-sz) \setminus A| \leq Cs|z|$ for small enough $s$ and similarly for $B$, we have that $|h_s(z)| \leq C|z|$ for $s$ small. From the assumption that $\int |z||\bK|(\md z)<\infty$, DCT applies to give
\[
L^{-d+1}\cov(\cX(f_L),\cX(g_L))  \to -\frac12 \int  h_0(z) \ \bK(\md z),
\]
which completes the proof pending justification of \eqref{eq:convofhstohzero}.
%
%
%

\paragraph{Proof of \eqref{eq:convofhstohzero}.}  We shall break the proof into four steps for convenience.  
Fix  $C^1$ domains $A,B$ and $z \in \R^d$. Let $N_A(\cdot)$ and $N_B(\cdot)$ denote the respective outward unit normals. Let $C = \partial A \cap \partial B$.
\begin{enumerate}[wide, labelwidth=!, labelindent=0pt]
\item[\underline{STEP 1.}] Given $p\in C$, we can find a common chart for both $\partial A$ and $\partial B$ in a neighbourhood of $p$. By this we mean that there exist $r_p>0$, an orthonormal matrix $M_p$, an open set $U_p \subseteq \R^{d-1}$ and  $C^1$ functions $\phi_p,\psi_p:U_p\mapsto \R$, and $\eps_p\in \{+1,-1\}$, such that 
\begin{align}
\label{eq:commonchart} A\cap B(p,r_p) &=\{x\in B(p,r_p) \;  : \; t>\phi_p(y) \} \qquad 
B\cap B(p,r_p) =\{x\in B(p,r_p) \;  : \; \eps_p t > \eps_p \psi_p(y) \},
\end{align}
where $M_p(x-p)=(y,t)\in \R^{d-1}\times \R$. For $A$ alone, the existence of a chart is the definition of $C^1$-smoothness. But we have the freedom to choose the ``vertical" $t$-axis to be any direction that makes an angle of less than $\frac{\pi}{2}$ with $N_A(p)$. Choose the sign $\eps_p$ so that $\eps_pN_B(p)$ makes an angle of less than $\pi$ with $N_A(p)$. Then we can choose a $t$-axis that makes an angle of less than $\frac{\pi}{2}$ with $N_A(p)$ and with $\eps_p N_B(p)$. Thus the existence of a common chart. 

\item[\underline{STEP 2.}] By compactness of $C$, we can pick a finite set $F\subseteq C$ such that $C':=\cup_{p\in F}B(p,\frac12 r_p)$ covers  $C$. Choose a partition of unity $\{v_p\ : \ p\in F\}$ subordinate to $\{B(p,r_p)\ : \ p\in F\}$. Thus $v_p$ is a  smooth function with compact support inside $B(p,r_p)$ and such that $\sum_{p\in F} v_p=1$ on $C'$. Using this, we can write for any $z\in \R^d$ and small enough $s>0$,
\begin{align}\label{eq:hsusingpou}
h_s(z) 
&= \sum_{p\in F}\frac{1}{s}\int v_p(x) (1_{(A-sz)\setminus A}(x)-1_{A\setminus (A-sz)}(x))(1_{(B-sz)\setminus B}(x)-1_{B\setminus (B-sz)}(x)) \; \md x. 
\end{align}

\item[\underline{STEP 3.}] In this step we fix $p\in F$ and  drop it in the subscripts for notational simplicity. We work in the rotated co-ordinates $x=(y,t)$ so that $A\cap B(p,r)=\{(y,t): y\in U, \  \phi(y)<t\}$ and $B\cap B(p,r)=\{(y,t) : y\in U, \  \eps \psi(y)<\eps t\}$. Partition  $U$ into
\begin{align*}
G_0&=\{y\in U\ : \ \phi(y)=\psi(y)\mbox{ and }\nabla \phi(y)=\nabla \psi(y)\}, \\
G_1&=\{y\in U : \ \phi(y)\not=\psi(y)\}, \\
G_2&=\{y\in U : \ \phi(y)=\psi(y), \ \nabla \phi(y)\not=\nabla \psi(y)\}.
\end{align*}
Applying  the inverse function theorem to $\phi-\psi$, we see that $G_2$ is a manifold of dimension $d-2$, and hence has zero measure (under the $(d-1)$-dimensional Lebesgue measure on $U$).

Now consider the summand for $p$ in \eqref{eq:hsusingpou}. In $(y,t)$ co-ordinates, the integral is over $U\times \R$ and the integral is 
\begin{align*}
h_s^{p}(z)&:=\int_U H_s^p(y) \ \md y \; \mbox{ where}, \\
H_s^p(y)&=\frac{1}{s}\int_{\R} v_p(y,t) (1_{(A-sz)\setminus A}(y,t)-1_{A\setminus (A-sz)}(y,t))(1_{(B-sz)\setminus B}(y,t)-1_{B\setminus (B-sz)}(y,t)) \; \md t.
\end{align*}
Writing $z=(w,r)$ in the $(y,t)$-coordinates. For  $y\in U$, and $t\in \R$, we have:
\begin{enumerate}
\item $(y,t)\in (A-sz)\setminus A$ if and only if $\phi(y+sw) - sr < t \le \phi(y)$. For small $s$, this is an interval of length $s(r-\nabla \phi(y).w)_+ + o(s)$.
\item $(y,t)\in A\setminus (A-sz)$ if and only if $\phi(y+sw) - sr \ge t > \phi(y)$. Up to $o(s)$ terms, this is an interval of length $s(r-\nabla \phi(y).w)_-$.
\item $(y,t)\in (B-sz)\setminus B$ if and only if $\eps (\psi(y+sw) - sr) < \eps t \le \eps \psi(y)$. Up to $o(s)$ terms, this  interval has length $s(r-\nabla \psi(y).w)_+$ for $\eps=1$ and length  $s(r-\nabla \psi(y).w)_- $ for $\eps=-1$.
\item $(y,t)\in B\setminus (B-sz)$ if and only if $\eps(\psi(y+sw) - sr) \ge \eps t > \eps \psi(y)$. Up to $o(s)$ terms, this  interval has  length $s(r-\nabla \psi(y).w)_- $ if $\eps=1$ and length $s(r-\nabla \psi(y).w)_+$ if $\eps=-1$.
\end{enumerate}
The first two intervals are in the vicinity of $\phi(y)$ while the last two are in the vicinity of $\psi(y)$. Therefore, if  $y\in G_1$, then the  first two are disjoint from the last two, for small enough $s$. Hence, $H_s^p(y)\to 0$ for $y \in G_1$.

On the other hand, if $y\in G_0$, then when $\eps=1$, either the first and third intervals are the same (up to $o(s)$ terms) or the second and fourth are the same. When $\eps=-1$, either the first and fourth intervals are the same or the second and fourth intervals are the same. This implies that $H_s^p(y)\to \eps |r-\nabla \phi(y).w|$ for $y \in G_0$.

For all small $s$, it is clear that $H_s(y)\le C|z|$ (as $A\Delta (A-sz)$ has measure at most $Cs|z|$) for all $y \in U$. Therefore, applying DCT (we may ignore $G_2$ as it has zero measure), we get
\begin{align}\label{eq:limtofhsp}
h_s^p(z)\to \eps \int_{G_0}v_p(y,\phi(y)) |r-\nabla \phi(y).w| \ \md y.
\end{align}
Note that on $G_0$, we may  replace $\phi$ and $\nabla \phi$ by $\psi$ and $\nabla \psi$ respectively.  Now observe that 
$$ 
r - \nabla \phi_p(y) .w = (r,w).(1,-\nabla \phi_p(y)) = \sqrt{1 + |\nabla \phi_p(y)|^2}(z.N_A(x)), 
$$ 
where $x=(y,t)$. Combining this with \eqref{eq:limtofhsp},  we get (and we put the subscript back in)
\begin{align*}
h_s^p(z)\to \int_{U}v_p(y,\phi_p(y)) 1_{\phi_p(y)=\psi_p(y)} 1_{\nabla \phi_p(y)=\nabla \psi_p(y)}\eps_p |z.N_A(y,\phi_p(y))| \ \sqrt{1 + |\nabla \phi_p(y)|^2} \ \md y.
\end{align*}
Observe that for all  $y\in G_0$, we have $\eps(x)=\eps_p$, where $\eps(x)=\<N_A(x),N_B(x)\>$ and $x=(y,\phi(y))=(y,\psi(y))$. Further, as $G_2$ has zero measure, we may drop the second indicator. Thus,
\begin{align}
h_s^p(z)&\to \int_{U_p}v_p(y,\phi_p(y)) 1_{\phi_p(y)=\psi_p(y)} \eps(y,\phi(y)) |z.N_A(y,\phi_p(y))| \  \sqrt{1 + |\nabla \phi_p(y)|^2}  \  \md y \nonumber \\
&= \int_{\partial A\cap B(p,r_p)}v_p(x)\eps(x) |z.N_A(x)| \ \sigma_A(\md x). \label{eq:limitofhspz}
\end{align}
As already remarked, on $G_0$ the functions $\phi_p$ and $\psi_p$ and their gradients coincide, hence we may also write the above equation with $B$ in place of $A$.

\item[\underline{STEP 4.}] Sum \eqref{eq:limitofhspz} over $p\in F$ and recall that $\sum_p v_p(x)=1$ for $x\in C$, to get
\[
h_s(z)\to \int_{C} \eps(x) |z.N_A(x)| \sigma_A(\md x).
\]
This is precisely $h_0(z)$, completing the proof of \eqref{eq:convofhstohzero} and hence of the theorem.
\end{enumerate}
\end{proof}

\subsection{Variance lower bounds}
\label{s:varlb}

In this section,  we derive a lower bound for variance of stationary random measures with a non-trivial atomic part and truncated two-point correlation function in $d \geq 2$. By the Riemann-Lebesgue lemma, the existence of a truncated two-point correlation function implies that $\widehat{\bK}$ vanishes at $\infty$ and we need this assumption. 
\begin{proposition}
\label{prop:var_lb_stat_rand_meas}
Let $\cX$ be a stationary random measure whose truncated correlation measure $\bK$ has finite total variation and $\widehat{\bK}$ vanishes at $\infty$. Let $\cX_L$ be the scaled random measure defined by $\cX_L(A) := \cX(LA)$,  $L \geq 1$.  Then,  there exists a constant $c >0$ (depending only on $d$) such that for any bounded set $W$ and for all $L \geq 1$,  
\begin{equation}
\label{e:lb_var}
\Var{\cX_L(W)} \geq c\lambda_D \min \{|W|L^d,  |W|^{\frac{d-1}{d}}L^{d-1}\},
\end{equation}
where $| \cdot |$ denotes the Lebesgue measure.
\end{proposition}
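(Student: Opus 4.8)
The plan is to argue entirely on the Fourier side, starting from the exact covariance formula~\eqref{eq:prelimcovformulafourier}. Applying it to $\cX$ with test function $1_{LW}$ (so that $\cX_L(W)=\cX(1_{LW})$), using the dilation identity $\widehat{1_{LW}}(t)=L^d\widehat{1_W}(Lt)$, and substituting $u=Lt$, I would record the exact identity
\[
\Var{\cX_L(W)}=\frac{L^d}{(2\pi)^d}\int_{\R^d}|\widehat{1_W}(u)|^2\,S(u/L)\,\md u,\qquad S(t):=\lambda_D+\widehat{\bK}(t).
\]
The function $S$ is the structure factor, and the first point I would establish is its pointwise positivity: since $\Var{\cX(h)}=\tfrac{1}{(2\pi)^d}\int|\hat h|^2 S\ge 0$ for every $h\in L^1\cap L^2$, since $|\hat h|^2$ can be concentrated near any prescribed frequency, and since $S$ is continuous (as $\bK$ is finite, $\widehat{\bK}$ is continuous), one gets $S(t)\ge 0$ for all $t$.

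The second ingredient is the hypothesis that $\widehat{\bK}$ vanishes at infinity, which yields a radius $R$ with $S(t)\ge \lambda_D/2$ whenever $|t|\ge R$. Discarding the region $|u/L|<R$ (legitimate because $S\ge 0$) gives
\[
\Var{\cX_L(W)}\ \ge\ \frac{\lambda_D}{2}\cdot\frac{L^d}{(2\pi)^d}\int_{|u|\ge LR}|\widehat{1_W}(u)|^2\,\md u .
\]
It then remains to bound the high-frequency $L^2$-mass of an indicator from below, and the geometric lemma I would isolate is that for every bounded measurable $W$ and every $\rho>0$,
\[
\frac{1}{(2\pi)^d}\int_{|u|\ge \rho}|\widehat{1_W}(u)|^2\,\md u\ \ge\ c_d\,\min\Big\{\,|W|,\ \tfrac{1}{\rho}\,|W|^{\frac{d-1}{d}}\,\Big\}.
\]
This encodes the familiar fact that the high-frequency tail of $\widehat{1_W}$ sees the perimeter of $W$. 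I would prove it by bounding the complementary low-frequency mass $\tfrac{1}{(2\pi)^d}\int_{|u|<\rho}|\widehat{1_W}|^2$ from above: majorising $1_{\{|u|<\rho\}}$ by a positive-definite bump $\phi(\cdot/\rho)$ rewrites it as $\int g_W(z)\,\rho^d\check\phi(\rho z)\,\md z$ with $g_W(z)=|W\cap(W+z)|$ the covariogram, and then the identity $g_W(0)-g_W(z)=\tfrac12|W\triangle(W+z)|$ together with the isoperimetric bound $\mathrm{Per}(W)\ge c_d|W|^{(d-1)/d}$ quantifies the deficit $|W|-\tfrac{1}{(2\pi)^d}\int_{|u|<\rho}|\widehat{1_W}|^2$. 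Applying this lemma to $LW$ with $\rho=LR$ and using $|LW|=L^d|W|$ then produces a lower bound of exactly the claimed shape, with the two regimes of the minimum matching the two cases of the lemma: the volume term $L^d|W|$ dominates for small windows ($|LW|\lesssim 1$, where essentially all the $L^2$-mass already lies at high frequency) and the surface term $L^{d-1}|W|^{(d-1)/d}$ for large windows, isoperimetry supplying the only perimeter lower bound available for an arbitrary bounded set (this is also where $d\ge 2$ enters meaningfully).

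The main obstacle is the process-dependent radius $R$: to secure a constant depending only on $d$ one must show that the structure factor recovers a definite fraction of $\lambda_D$ already at a frequency scale controlled \emph{independently} of $\bK$ — equivalently, that $S$ cannot be suppressed near the origin over an arbitrarily large ball, for otherwise the displayed restriction to $|u|\ge LR$ loses a factor $1/R$ and degrades the bound. The positivity $S\ge 0$ alone does not prevent such suppression; here is precisely where the genuine atomic part must be used quantitatively, by invoking the positivity of the reduced factorial moment measure $\beta_2^!=\bK+\lambda^2\md z\ge 0$ (equivalently $\kappa\ge-\lambda^2$), which forces $\widehat{\bK}$ to return to the neighbourhood of $0$ — and hence $S$ to the neighbourhood of $\lambda_D$ — on a controlled scale. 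Reconciling this suppression bound with the clean dimensional constant in the statement is the delicate step, and it is the part of the argument I would expect to require the most care.
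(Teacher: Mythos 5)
Your first half---the Fourier representation of the variance, the positivity of the structure factor $S=\lambda_D+\widehat{\bK}$ deduced from non-negativity of variances, the choice of a radius $R$ with $S\ge \lambda_D/2$ for $|t|\ge R$, and the reduction to a lower bound on $\int_{|u|\ge LR}|\widehat{1}_W|^2$---is exactly the paper's argument (its \eqref{e:lb_ftf}). The divergence, and the first genuine gap, is in your ``geometric lemma''. The paper does not go through the covariogram: it smooths $1_W$ at frequency scale $R$ via a multiplier ($\widehat{\varphi}_{W,R}=\widehat{1}_W\, m_R$, so that $\int_{|u|\ge R}|\widehat{1}_W|^2\gtrsim \|1_W-\varphi_{W,R}\|_2^2$ while $|\nabla \varphi_{W,R}|\lesssim R$), and then runs a case analysis on $|\{\varphi_{W,R}\ge \tfrac12\}|$ versus $\tfrac12|W|$, using the coarea formula and the isoperimetric inequality on the \emph{level sets of the smoothed function} to produce the two regimes of the minimum. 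Your covariogram route correctly reduces the high-frequency mass to $\tfrac12\int |W\triangle(W+z)|\,\rho^d\check{\phi}(\rho z)\,\md z$, but the justification you offer for bounding this below---the identity $g_W(0)-g_W(z)=\tfrac12|W\triangle(W+z)|$ ``together with $\mathrm{Per}(W)\ge c_d|W|^{(d-1)/d}$''---does not close: the elementary inequality relating the symmetric difference to the perimeter is $|W\triangle(W+z)|\le |z|\,\mathrm{Per}(W)$, i.e.\ it goes the wrong way, and the pointwise directional bound $|W\triangle(W+z)|\ge c\min\{|W|,\,|z|\,|W|^{(d-1)/d}\}$ is simply false (take $W=[0,\eps]^{d-1}\times[0,1]$ and $z$ along the long axis; the left side is $2t\eps^{d-1}$ while the right side is $\asymp t\,\eps^{(d-1)^2/d}$, which is much larger for small $\eps$). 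One must genuinely exploit the average over directions, and also recall that $W$ is an arbitrary bounded measurable set, so $\mathrm{Per}(W)$ may be infinite; this is precisely what the paper's level-set/coarea argument handles. As written, your key lemma is asserted, not proved.

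The second gap is that you explicitly leave the proof unfinished at what you call the ``main obstacle'', and that obstacle is a mirage: no radius $R$ controlled independently of $\bK$ exists. The positivity $\beta_2^!\ge 0$ cannot force $S$ back up to $\lambda_D/2$ at a universal scale---for instance a determinantal-type truncated correlation $\kappa\approx -\lambda^2\cdot(\mbox{bump of width }\eps)$ normalized so that $\int\kappa=-\lambda$ keeps $\widehat{\bK}$ near $-\lambda=-\lambda_D$ out to frequencies of order $1/\eps$. The paper's own proof does not remove this dependence either: it fixes $C$ with $\widehat{\bK}(y)\ge -\lambda_D/2$ for $|y|\ge C$ and carries $1/C$ into the final constant, so what the proof actually delivers is $c=c(d)/C$ with $C$ depending on the process (the phrase ``depending only on $d$'' in the statement is, strictly, an overstatement of what is proved). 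You correctly spotted this tension, but the right response is to accept a process-dependent radius and constant and finish the argument, not to search for a uniform version that is unavailable.
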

This proposition reduces to  that of Nazarov and Sodin \cite[Lemma 1.6]{Nazarov12} if we assume that $\cX$ is a point process in $\R^2$. Similar bounds were derived by Beck \cite{Beck1987}; see also \cite{Bjorklund2024}. However,  essentially following the proof of \cite[Lemma 1.6]{Nazarov12}, we obtain a non-trivial variance lower bound for stationary random measures with a non-trivial atomic part i.e., $\lambda_D > 0$. This is more widely applicable for various stochastic geometric models as explained below \eqref{e:inducedrm}.

Combining the above proposition with Theorem \ref{t:covindfn}, we have that if $\lambda_D + \int \bK = 0$ and $\int |z||\bK|(\md z) < \infty$, then 
\begin{equation}
\label{e:varasymp_surface}
L^{-d+1}\Var{\cX_L(A)}  \to  -\frac12 \int_{\partial A} \left[\int |z\cdot N_A(x)| \ \bK(\md z) \, \right] \sigma(\md x) \in (0,\infty),
\end{equation}
where $A$ is a bounded open set with $C^1$-smooth boundary. In case $\cX$ is a stationary point process, the above claim \eqref{e:varasymp_surface} follows from \cite[Proposition 2]{Martin80} and \cite[Lemma 1.6]{Nazarov12}. In special cases such as Ginibre point process or zeros of GAF, the limit \eqref{e:varasymp_surface} is known to be positive; see \cite[Section 3.4]{Forrester2014} and \cite{Forrester1999} respectively. The above variance convergence also holds for perturbed lattices (see \cite[Theorem 5]{Yakir21} and \cite{Gacs1975}) even though our integrability assumption on the truncated two-point correlations does not hold for this case. Also note that without the integrability assumption, variance asymptotics of indicator statistics can be infinite for some sets $A$; see \cite{Kim2017} and \cite[Section 3.1]{Yakir21}. 

An important example of atomic random measures that are not point processes is the following. Let $\P$ be a stationary point process on $\R^d$ and $\xi : \R^d \times \cN \to \R$ be a  function.  Define the induced random measure
\begin{equation}
\label{e:inducedrm}
\cX  := \sum_{X \in \P} \xi(X,\P)\delta_{X}. 
\end{equation}
Central limit theorems for $\cX_L(W)$ have been proven assuming suitable variance lower bounds; for example, see \cite{BYY2019,BYY2022,Fenzl2019,Chen2021,Cong2023}. The required variance lower bound in such central limit theorems follows from \eqref{e:lb_var} and further \eqref{e:varasymp_surface} guarantees the positivity of the limit in variance asymptotics for $\var[\cX_L(W)]$; see  \cite[Theorem 1.13]{BYY2019}. 

Turning to the proof, the plan is to first derive a variance lower bound for $ \cX(f)$ when $f \in L^1 \cap L^2$ (see \cite[Lemma 1.7]{Nazarov12}) with the lower bound given in terms of $\hat{f}$.  The proof is concluded by analyzing 'high-frequencies' of $\hat{1}_{LW}$.

\begin{proof}
We shall assume $W$ has positive Lebesgue measure as the claim is trivial otherwise. Let $f \in L^1 \cap L^2$.   Recall that $f_L(\cdot) = f(L^{-1}\cdot)$ and so we have that $\widehat{f}_L(y) = L^d\hat{f}(Ly)$.  Then, substituting this in \eqref{eq:prelimcovformulafourier} and noting that $\cX_L(f) = \cX(f_L)$,  we have
$$ \Var{\cX_L(f)} = \frac{1}{(2\pi)^d} \int |\hat{f}_L(t)|^2 (\widehat{\bK}(t) + \lambda_D)\md t.$$
Since the LHS exists and is non-negative for all $f \in L^1 \cap L^2$,  we have that $\widehat{\bK}(y) + \lambda_D \geq 0$ for all $y \in \mR^d$. As $\widehat{\bK}(y) \to 0$ for $|y| \to \infty$, we can choose $C$ such that $\widehat{\bK}(y) \geq \frac{-\lambda_D}{2}$ for $|y| \geq C$ and thus we derive that for a finite constant $c$,  
\begin{eqnarray}
\Var{\cX_L(f)} & \geq &  \frac{\lambda_D L^{2d}}{2(2\pi)^d} \int_{|y| \geq C}  |\widehat{f}(Ly)|^2 \md y \no \\
\label{e:lb_ftf} & = & c \lambda_D L^d\int_{|y| \geq CL} |\widehat{f}(y)|^2 \md y.
\end{eqnarray}
Assuming $\lambda_D > 0$ without loss of generality,  we shall proceed as in the proof of \cite[Proposition 1.6]{Nazarov12} to complete the proof.  We will now give the details of this argument. 

Choose a non-negative multiplier $m \in C_0^{\infty}(\mR^d)$ such that $m \leq 1$, $m(\xi) = 0$ for $\|\xi\| \geq 2$ and $m \equiv 1$ in the unit ball. With $m_L(\xi) = m(L^{-1}\xi)$, observe that $\widehat{m}_L(x) = L^d\widehat{m}(Lx)$ and hence $\|\nabla \widehat{m}_L\|_{1} = L \|\nabla \widehat{m}\|_{1}$.  We define $\varphi_{W,L}$ by setting its Fourier transform to be $\widehat{\varphi}_{W,L} = \widehat{\1}_W \cdot m_L$.  Thus,  we have that $\varphi_{W,L}$ is smooth and further we obtain that $|\nabla \varphi_{W,L}| \leq C'L$ for some constant $C'$.   

Now setting $f = \1_W$ in \eqref{e:lb_ftf} and $R = CL$, we derive that
\begin{eqnarray}
\Var{\cX_L(W)} & \geq &  \frac{cL^d}{2} \int_{|y| \geq R} |\widehat{\1}_W(y)|^2 \md y \, \geq \, \frac{cL^d}{2} \int_{\mR^d} |\widehat{\1}_W(y)|^2(1 - m_R(y))^2 \md y \no \\
& =  & \frac{cL^d}{2} \int_{\mR^d}(\1_W - \varphi_{W,R}(y))^2 \md y. \label{e:lb_varchiLW}
\end{eqnarray}
We first consider the case $|\{\varphi_{W,R} \geq \frac{1}{2}\}| \geq \frac{1}{2}|W|.$  Let $\omega_d$ denote the volume of the unit ball in $\R^d$.
\begin{eqnarray}
\int_{\mR^d}(\1_W - \varphi_{W,R}(y))^2 \md y & \geq  & \int_{\varphi_{W,R} \leq \frac{1}{2}} (\1_W - \varphi_{W,R}(y))^2 \md y \geq \frac{1}{4}  \int_{\varphi_{W,R} \leq \frac{1}{2}}  \md y \geq \frac{1}{4C'R} \int_{\varphi_{W,R} \leq \frac{1}{2}} |\nabla \varphi_{W,R}| \no \\
& =  & \frac{1}{4C'R} \int_0^{\frac{1}{2}} \cH_{d-1}(\{\varphi_{W,R} = t\}) \md t \, \, \, \, \mbox{(using coarea formula)} \no \\
& \geq & \frac{d\omega_d^{(d-1)/d}}{4C'R} \int_0^{\frac{1}{2}} | \{\varphi_{W,R} \geq t\}|^{\frac{d-1}{d}} \md t \, \, \, \, \mbox{(using isoperimetric inequality)} \no \\
& \geq &  \frac{d\omega_d^{(d-1)/d}}{8C'R} |\{\varphi_{W,R} \geq \frac{1}{2} \}|^{\frac{d-1}{d}}  \no \\
& \geq &  \frac{d}{8C'R}(\frac{\omega_d}{2})^{(d-1)/d} |W|^{\frac{d-1}{d}} = \frac{d}{8C'CL}(\frac{\omega_d}{2})^{(d-1)/d} |W|^{\frac{d-1}{d}},
\end{eqnarray}
where we have used the coarea formula \cite[Lemma 3.2.11]{Federer1996} and isoperimetric inequality between Lebesgue volume $|\cdot|$ and the $(d-1)$-dimensional Hausdorff measure $\cH_{d-1}$.  Usually,  the isoperimetric inequality is between the Lebesgue volume and Minkowski content but we have used $\cH_{d-1}$ instead of the Minkowski content.  This is justified because of \cite[Remark 3.2.44]{Federer1996} and that level sets of $\varphi$ are smooth for a.e.  $t \in [0,1]$ due to the Morse-Sard theorem. 
 
The other case  of $|\{\varphi_{W,R} \geq \frac{1}{2}\}| \leq \frac{1}{2}|W|$ can be handled more easily and we can derive the bound that
\begin{align*}
\int_{\mR^d}(\1_W - \varphi_{W,R}(y))^2 \md y & \geq  \int_{\varphi_{W,R} \, \leq \, \frac{1}{2}} (\1_W - \varphi_{W,R}(y))^2 \md y  \, \geq \, \frac{1}{4} \int_{\varphi_{W,R} \, \leq \, \frac{1}{2}} \1_W  \md y \\
& \geq c |W \cap \{\varphi_{W,R} \leq \frac{1}{2}\}| \, \geq \, c \left( |W| -  | \{ \varphi_{W,R} \, \geq \, \frac{1}{2} \}| \right) \geq c|W|/2.
\end{align*}
Substituting these two bounds in \eqref{e:lb_varchiLW} completes the proof of \eqref{e:lb_var}.
\end{proof}

\section{Central limit theorems for statistics of point processes}
\label{s:clt}

In this section, we upgrade our variance asymptotics to central limit theorems for statistics of point processes under some stronger assumptions. In Section \ref{s:clts}, we introduce some necessary notation first and then state the two central limit theorems - one for Sobolev statistics and another for H\"{o}lder statistics. The central limit theorems are proved under certain assumptions on integrability of truncated correlation functions as well as an integral identity relating higher-order truncated correlations functions with lower order ones. In Section \ref{s:highertruncorr}, we discuss examples of  point processes satisfying these conditions. In Section \ref{s:multilinearcumulants}, we prepare for our proofs by proving a key reduction formula (Proposition \ref{l:cumuredu}) for multilinear cumulants that uses an idea due to  B\'{a}lint Vir\'{a}g and the first author. The proofs of the two central limit theorems are provided in Section \ref{s:proofs_clt}. The proofs are by the method of moments, or more precisely by showing that if the statistics are scaled so that the variance converges, then higher cumulants vanish. By a well-known theorem of Marcinkiewicz, this shows normal convergence of the scaled statistics. 


\subsection{The central limit theorems} 
\label{s:clts}

Let $\cX$ be a simple  point process on $\R^d$ such that for every $k\ge 1$, the $k$-point correlation function $\rho_k$ exists (see \eqref{e:kcorr}) and is locally-finite. The {\em truncated correlations} $\rho_k^T$ are defined as 
\begin{align}\label{eq:defnoftruncatedcorrelations}
\rho_k^{T}(x_1,\ldots ,x_k) :=\sum_{\pi\in \Pi(k)}(-1)^{\ell(\pi)-1}(\ell(\pi)-1)!\prod_{j=1}^{\ell(\pi)}\rho_{|\pi_j|}(x_{\pi_j}),
\end{align}
where $\Pi(k)$ is the collection of all set partitions of $[k] := \{1,\ldots,k\}$ and for $\pi\in \Pi(k)$ having parts $\pi_1,\ldots ,\pi_{\ell}$, we write $\ell(\pi)=\ell$ (number of blocks). For $S\subseteq [k]$, we write $x_S$ for the vector $(x_i)_{i\in S}$ (say in natural order, but it does not matter as we only apply symmetric functions to it).  For instance, $\rho_1^T(x)=\rho_1(x)$ and $\rho_2^T(x_{[2]})=\rho_2(x_1,x_2)-\rho_1(x_1)\rho_1(x_2)$ which is equal to $\kappa(x-y)$ when $\cX$ is stationary. By the M\"{o}bius inversion formula on the lattice of set partitions, \eqref{eq:defnoftruncatedcorrelations} is equivalent to 
\begin{align}\label{eq:defnoftruncatedcorrelations2}
\rho_k(x_1,\ldots ,x_k) :=\sum_{\pi\in \Pi(k)}\prod_{j=1}^{\ell(\pi)}\rho_{|\pi_j|}^T(x_{\pi_j}).    
\end{align}
Thus, truncated correlations carry the same information as correlations. From the form of \eqref{eq:defnoftruncatedcorrelations} and \eqref{eq:defnoftruncatedcorrelations2}, it is clear truncated correlations and correlations are related like cumulants and moments. This will be seen in \eqref{eq:kappaqmpm} where cumulants of linear statistics are expressed in terms of the truncated correlation functions. It is also easily seen in point processes on discrete spaces, see Appendix~\ref{s:app_discrete}.

\begin{remark}
\label{rem:corr_diag}
We are sticking to simple point processes on $\R^d$, with correlation functions being densities of correlation measures w.r.t. Lebesgue measure. Therefore, we have not had to clarify how correlation and truncated correlation functions are defined when $x_i$ are not distinct. Apart from extending $\rho_k$ by continuity on diagonals when possible, another convention for simple point processes is to set $\rho_k(x_1,\ldots ,x_k)=0$ if $x_i=x_j$ for some $i\not=j$, and then the definition of $\rho_k^T$ remains as stated in \eqref{eq:defnoftruncatedcorrelations}. In cases of point processes like Ginibre or zeros of G.E.F., the two conventions agree. The latter convention is the right one, for example, when considering simple point process on $\Z^d$ with correlation functions being densities w.r.t. counting measure on $\Z^d$. This definition ensures the validity of  \eqref{eq:kappaqmpm}, which expresses cumulants of linear statistics of the point process in terms of the truncated correlations functions. To not interrupt the flow, we relegate a brief discussion to Appendix~\ref{s:app_discrete}. There we also explain how  Assumption~\ref{asmpt:integrationofrhokT} below holds in the discrete setting, for point processes with a constant, finite number of points.
\end{remark}
Now we state our non-trivial assumption on truncated correlations. 
\begin{assumption}\label{asmpt:integrationofrhokT}
Assume that for all $k \geq 2$, $\rho_k^T$ satisfies for a.e. $x_1,\ldots,x_{k-1}$,
\begin{align}
\int_{\R^d} \rho_k^{T}(x_1,\ldots ,x_k)\md x_{k} & = -(k-1)\rho_{k-1}^T(x_1,\ldots ,x_{k-1}), \label{eq:integralsoftruncatedcorrelations0}
\end{align}
where we have tacitly assumed the existence of the integral in the LHS.
\end{assumption}
Observe that for stationary $\X$ and $k=2$, this is the same as $\int \kappa(z)\md z=-1$ which is equivalent to hyperuniformity \cite{Coste2021}.  It is easy to see that \eqref{eq:integralsoftruncatedcorrelations0} implies that for all $k \geq 1$ and $r < k$,  
\begin{align}\label{eq:integralsoftruncatedcorrelations}
\int_{(\R^d)^{k-r}}\rho_{k}^T(x_1,\ldots ,x_k)\md x_{r+1}\ldots \md x_{k} = (-1)^{k-r}\frac{(k-1)!}{(r-1)!}\ \rho_r^T(x_1,\ldots ,x_r).
\end{align}
The condition \ref{eq:integralsoftruncatedcorrelations0} is not always true for point processes.  For example,  $\rho_k^T \equiv 0$ for all $k \geq 2$ for a Poisson process but $\rho_1^T \equiv \rho_1$ and so \eqref{eq:integralsoftruncatedcorrelations0} is violated for $k = 2$. In the next subsection, we give examples of point processes for which \eqref{eq:integralsoftruncatedcorrelations0} holds. 

If $\X$ is stationary, we define $\kappa_{m-1}(z_1,\ldots ,z_{m-1}) := \rho_m^T(x+z_1,\ldots ,x+z_{m-1},x)$ for any $x$. For any   $\tau: [m] \to [m-1]$ and $\alpha^{(j)}\in \Z_+^d$, $j\le m$,   we define
\begin{equation}
\label{eq:defIm} I_m(\tau; \alpha^{(1)},\ldots ,\alpha^{(m)}) := \frac{1}{\alpha^{(1)}!\times \ldots \times \alpha^{(m)}!}\int_{(\R^d)^{m-1}}  \kappa_{m-1}(z_1,\ldots ,z_{m-1}) \prod\limits_{i=1}^m z_{\tau(i)}^{\alpha^{(i)}}\; \md z_1\ldots \md z_{m-1},
\end{equation}
provided the integral exists. 
\remove{Further, set  
\begin{align}
d_m &:= \min\left\{ \sum_{j=1}^m |\alpha^{(j)}| \; : \; \alpha^{(j)} \neq 0,  \; I_m(\tau; \alpha^{(1)},\ldots ,\alpha^{(m)}) > 0 \mbox{ for some }\tau:[m]\to [m-1]\right\} \no \\
\label{eq:defdm} & = \dy{1 + \max \left\{ \sum_{j=1}^m |\alpha^{(j)}| \; : \; \alpha^{(j)} \neq 0,  \; I_m(\tau; \alpha^{(1)},\ldots ,\alpha^{(m)}) = 0 \mbox{ for all }\tau:[m]\to [m-1]\right\} }  
\end{align}
\footnote{\dy{max definition better than min definition ? Lesser integrability assumption ?}}The definition implies that $d_m\ge m$. \dy{Evenness of $\kappa$ gives that $d_2$ is even as observed in Section \ref{s:covsmooth}.}}
We first give a criteria for asymptotic normality of Sobolev linear statistics of $\cX$. Recall definition of $I(\alpha)$'s and $Q_m$'s from Section \ref{s:exactformulas}.
\begin{theorem}(CLT for Sobolev statistics)
\label{thm:cltsmooth}
Let $\cX$ be a simple stationary point process on $\R^d$ with truncated correlations satisfying the integrability condition in \eqref{eq:integralsoftruncatedcorrelations0}.  Assume that for all $m \geq 2$, there exists $d_m \geq m$ such that for all $l_1,\ldots,l_{m-1} \in \Z_+$ with $\sum_i l_i \leq d_m$,
\begin{align}\label{eq:intgrconditionforCLTSob}
    \int_{(\R^d)^{m-1}}  \prod\limits_{i=1}^{m-1} |z_{i}|^{l_i}\; |\kappa_{m-1}|(z_1,\ldots ,z_{m-1}) \, \md z_1\ldots \md z_{m-1} < \infty,
\end{align} 
and further that for all non-zero $\alpha^{(j)} \in \Z_+^d, j =1,\ldots,m$ with $\sum_{j=1}^m |\alpha^{(j)}| < d_m$, it holds that 
\begin{align}
\label{eq:defdm}
 I_m(\tau; \alpha^{(1)},\ldots ,\alpha^{(m)}) = 0, \, \, \forall \tau : [m] \to [m-1].   
\end{align}
Let $f \in L^1\cap H^k$ be a function on $\R^d$ for some $k\ge 1$ (with the convention that $H^0=L^2$). Let $\lambda_D + I(0) = 0$ and $I(\alpha)= 0$ for  $1\le |\alpha|< 2k$. If $2(d-d_m)< m(d-2k)$ for $m$ large, then 
\[
\frac{\cX_L(f)-\E[\cX_L(f)]}{L^{(d-2k)/2}}\stackrel{d}{\longrightarrow} N(0,Q_{2k}^{\kappa}(f,f))
\]
\end{theorem}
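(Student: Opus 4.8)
The plan is to prove asymptotic normality by the method of cumulants, as announced: I will show that after centering and rescaling by $L^{(d-2k)/2}$ the second cumulant converges to $Q_{2k}^{\kappa}(f,f)$ while every cumulant of order $m\ge 3$ tends to $0$, and then invoke Marcinkiewicz's theorem---convergence of all cumulants to those of a Gaussian forces convergence in distribution, since a Gaussian is determined by its moments. The second cumulant needs no new work: cumulants of order $\ge 2$ are unaffected by centering, and since $\lambda_D + I(0)=0$ and $I(\alpha)=0$ for $1\le |\alpha|<2k$, Proposition~\ref{l:covasymstatfindiffsobolev}(2) gives $L^{-(d-2k)}\Var{\cX_L(f)}\to Q_{2k}^{\kappa}(f,f)$ (identifying $\bK$ with $\kappa\,\md z$). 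The substance of the argument is thus the estimate on the higher cumulants.

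To bound $\mathrm{Cum}_m(\cX_L(f))$ for $m\ge 3$ I would first take $f\in C_c^{\infty}$ and start from the expression \eqref{eq:kappaqmpm} of cumulants of linear statistics in terms of truncated correlation functions. The reduction formula of Proposition~\ref{l:cumuredu}, fed with the integral identity \eqref{eq:integralsoftruncatedcorrelations0}, rewrites this cumulant so that the leading contributions arising from the diagonal structure of the simple point process cancel, leaving an expression built from the reduced correlation $\kappa_{m-1}$. Writing $\cX_L(f)=\cX(f_L)$ with $f_L(x)=f(x/L)$, using stationarity to pass to the relative variables $z_i=x_i-x_m$, and Taylor expanding each factor $f_L(x+z_i)$ about $f_L(x)$, every resulting term splits into an integral of a product of derivatives of $f$ against a moment of $\kappa_{m-1}$ of exactly the form $I_m(\tau;\alpha^{(1)},\ldots,\alpha^{(m)})$. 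The change of variables $u=x/L$ then shows that the term attached to total derivative order $s=\sum_j|\alpha^{(j)}|$ carries the scale $L^{d-s}$.

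Now the hypothesis \eqref{eq:defdm} forces $I_m(\tau;\cdot)=0$ for all $\tau$ whenever $0<\sum_j|\alpha^{(j)}|<d_m$, so the lowest-order surviving term has $s\ge d_m$, while the integrability hypothesis \eqref{eq:intgrconditionforCLTSob} guarantees absolute convergence of the relevant integrals and controls the Taylor remainders at the same scale. Hence $\mathrm{Cum}_m(\cX_L(f))=O(L^{d-d_m})$, so the $m$-th cumulant of the rescaled statistic is $O(L^{d-d_m-m(d-2k)/2})$, which vanishes as $L\to\infty$ precisely when $2(d-d_m)<m(d-2k)$, i.e. under the stated growth condition (for $m=2$ this is an equality, consistent with the non-degenerate variance). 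To remove the smoothness assumption on $f$ I would approximate a general $f\in L^1\cap H^k$ by $f_n\in C_c^{\infty}$ with $|f-f_n|_{H^k}\to 0$: Lemma~\ref{lem:varianceupperbounds} applied to $\cX_L$, together with $\|D^{2k}\widehat{\bK}_L\|_{\infty}\le L^{d-2k}\|D^{2k}\widehat{\bK}\|_{\infty}$, yields $L^{-(d-2k)}\Var{\cX_L(f-f_n)}\le C|f-f_n|_{H^k}^2$, so the rescaled statistics for $f$ and $f_n$ stay uniformly (in $L$) close in $L^2$ while $Q_{2k}^{\kappa}(f_n,f_n)\to Q_{2k}^{\kappa}(f,f)$; a routine $L^2$-perturbation argument transfers the limit theorem from $f_n$ to $f$.

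I expect the main obstacle to be the cancellation underlying the cumulant bound: one must check that the reduction formula genuinely annihilates \emph{all} contributions of order $L^{d-s}$ with $s<d_m$---not just the single fully connected term, but the entire combinatorial sum over set partitions generated by the diagonal terms of a simple point process---and that exchanging the Taylor expansion with the iterated integrals is legitimate under \eqref{eq:intgrconditionforCLTSob}. Tracking which products of multi-indices are forced to have every part nonzero, so that \eqref{eq:defdm} applies, and confirming that the surviving order is exactly $d_m$, is the delicate bookkeeping on which the whole scaling rests.
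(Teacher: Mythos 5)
Your proposal follows essentially the same route as the paper's proof: cumulant reduction via Proposition~\ref{l:cumuredu} under Assumption~\ref{asmpt:integrationofrhokT}, Taylor expansion producing the $I_m(\tau;\alpha^{(1)},\ldots,\alpha^{(m)})$ terms with scaling $L^{d-\sum_j|\alpha^{(j)}|}$, the bound $O(L^{d-d_m})$ from \eqref{eq:defdm} and \eqref{eq:intgrconditionforCLTSob}, Marcinkiewicz's theorem, and finally the density argument via Lemma~\ref{lem:varianceupperbounds} to pass from $C_c^{\infty}$ to $L^1\cap H^k$ (the paper makes your ``routine $L^2$-perturbation'' explicit via the Kantorovich $W_2$ distance). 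The proposal is correct and matches the paper's argument in all essentials.
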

\begin{remark} 
\label{rem:clts}
\begin{enumerate}
\item If $2k < d + 2$ then for all large enough $m$,  we have that $2(d-d_m)< m(d-2k)$ since $d_m \geq m$ (if $d_m$ exists). Hence we obtain asymptotic normality trivially for all such $k$ only assuming the integrability condition in the theorem upto order $2k+1$ i.e., for $l_1,\ldots,l_m \in \Z_+$ such that $\sum l_i \leq 2k+1$. In particular, this applies to the Ginibre process with $d = 2$, $k=1$, $Q_{2}^{\kappa}(f,f) = \frac{1}{4\pi}\|\nabla f\|_2^2$; see Section \ref{s:examples}.

\item By polarization, one can rephrase the result as saying that the random field $f\mapsto \frac{\cX_L(f)-\E[\cX_L(f)]}{L^{(d-2k)/2}}$ indexed by $L^1\cap H^k$ functions converges, in the sense of finite dimensional distributions, to the centered Gaussian field with covariance $Q_{2k}^{\kappa}(f,g)$.

\item If the interest is in a particular function $f$, one may want to say that $\frac{\cX_L(f) - \E[\cX_L(f)]}{\sqrt{\var[\cX_L(f)]}}\stackrel{d}{\rightarrow} N(0,1)$. The theorem does imply this, provided $f$ satisfies the assumptions of the Theorem and in addition we have $Q^{\kappa}_{2k}(f,f)\not=0$. This is because   Proposition~\ref{l:covasymstatfindiffsobolev} implies that $\var[\cX(f_L)] \sim Q_{2k}^{\bK}(f,f) L^{d-2k}$.  

\item When $\cX$ is the zero set of the  G.E.F., we expect the above CLT to hold with $k=2$ and $Q_{4}^{\kappa}(f,f) = \frac{\zeta(3)}{16\pi}\|\Delta f\|_2^2$. However, we are unable to verify \eqref{eq:integralsoftruncatedcorrelations0} but we do show that \eqref{eq:intgrconditionforCLTSob} and \eqref{eq:defdm} hold for some $d_m \geq 2m - 2$. See Section \ref{s:highertruncorr} for details. 

\item As will be seen from the proof, the condition  $2(d-d_m)<m(d-2k)$ is needed to prove the CLT for compactly supported smooth functions and then we extend it to functions in $L^1\cap H^k$. If one were to find an alternate condition that guarantees the CLT for compactly supported smooth functions, then our proof would readily extend it to functions in $L^1\cap H^k$ by arguments as in Proposition~\ref{l:covasymstatfindiffsobolev}.

\end{enumerate}  
\end{remark}
Next we state a CLT for H\"{o}lder continuous statistics of stationary point processes satisfying the integrability condition for truncated correlations and an appropriate variance lower bound. For $\alpha \in (0,1]$, let $C_0^{\alpha}$ be the space of compactly supported $\alpha$-H\"{o}lder continuous functions on $\R^d$. We also write $C_0^0$ for the space of bounded functions on $\R^d$.
\begin{theorem}(CLT for H\"{o}lder continuous statistics)
\label{thm:cltholder}
Let $\alpha \in [0,1]$. Let $\cX$ be a stationary point process on $\R^d$ satisfying the integrability condition for truncated correlations as in \eqref{eq:integralsoftruncatedcorrelations0} and further assume that for all $m \geq 2$ and for all $l_i \in \Z_+, \sum_{i=1}^{m-1} l_i= m$, 
\begin{equation}
 \label{eq:intgrconditionforCLTHol}   
 \int_{(\R^d)^{m-1}}  \prod\limits_{i=1}^{m-1} |z_{i}|^{\alpha l_i}\; |\kappa_{m-1}|(z_1,\ldots ,z_{m-1}) \, \md z_1\ldots \md z_{m-1} < \infty.
 \end{equation}
Suppose that $f \in C_0^{\alpha}$ and $\var[\cX_L(f)] \geq CL^{-2\alpha + \eps}$ for some $\eps,C > 0$.  Then CLT holds for $\cX_L(f)$, i.e.,
\[  \frac{\cX_L(f) - \E[\cX_L(f)]}{\sqrt{\var[\cX_L(f)]}} \stackrel{d}{\longrightarrow} N(0,1).\]
\end{theorem}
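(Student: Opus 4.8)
The plan is to prove asymptotic normality via the method of moments, showing that all cumulants of order $m \geq 3$ of the normalized statistic vanish in the limit, after which the theorem of Marcinkiewicz forces Gaussianity. Let me sketch the structure.

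The plan is to establish asymptotic normality by the method of cumulants. Writing $\widetilde{\cX}_L(f) := (\cX_L(f) - \E[\cX_L(f)])/\sqrt{\var[\cX_L(f)]}$, the first cumulant vanishes by centering and the second equals one by the normalization, so it suffices to show that every cumulant of order $m \ge 3$ of $\widetilde{\cX}_L(f)$ tends to $0$ as $L \to \infty$. Since a law all of whose cumulants beyond the second vanish must be Gaussian (Marcinkiewicz's theorem), this forces $\widetilde{\cX}_L(f) \cvgdist N(0,1)$. Thus everything reduces to estimating the higher cumulants $C_m(\cX_L(f))$ of the unnormalized statistic and comparing them with $\var[\cX_L(f)]^{m/2}$.

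First I would express $C_m(\cX_L(f))$ through the truncated correlation functions using the cumulant formula recalled in Appendix~\ref{s:app_cumulant} (see~\eqref{eq:kappaqmpm}), which writes it as a sum over set partitions $\pi\in\Pi(m)$ of integrals of $\rho^T_{\ell(\pi)}$ against the associated products of powers of $f_L=f(L^{-1}\cdot)$. Each term is naively of volume order $L^d$ (the one-block partition alone contributes $\lambda\int f_L^{\,m}\asymp L^d$), so the estimate must come entirely from cancellation. This is supplied by the reduction formula of Proposition~\ref{l:cumuredu}: invoking the integral identity~\eqref{eq:integralsoftruncatedcorrelations0}, the contributions in which $f_L$ is frozen at a common base point cancel across the partition sum, and after passing to the stationary variables $z_1,\dots,z_{m-1}$ one is left with an integrand assembled from the increments $f_L(x+z_i)-f_L(x)$. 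Each increment is controlled through the Hölder seminorm, $|f_L(x+z_i)-f_L(x)|\le [f]_\alpha(|z_i|/L)^{\alpha}$, producing a factor $L^{-\alpha}$; the $x$-integration runs over $\mathrm{supp}(f_L)$, of volume $\asymp L^d$; and the remaining integral against $|\kappa_{m-1}|$ is finite by the integrability hypothesis~\eqref{eq:intgrconditionforCLTHol}.

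The heart of the argument, and the step I expect to be the main obstacle, is to turn this increment representation into a bound of the form $|C_m(\cX_L(f))|\le C_m\,\var[\cX_L(f)]\,L^{-\alpha(m-2)}$, that is, to show that the $m$-th cumulant is no larger than the variance times $m-2$ extra gains of $L^{-\alpha}$. This requires more than the crude estimate $|C_m|\le C_m L^{d-\alpha m}$: one must recognize, after extracting $m-2$ of the increments via the Hölder bound, that the surviving degree-two increment structure paired with the higher correlation $\kappa_{m-1}$ collapses, again by the integral identity~\eqref{eq:integralsoftruncatedcorrelations}, to the two-point quantity that governs $\var[\cX_L(f)]$ itself. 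Granting this bound, the comparison is immediate: using the hypothesis $\var[\cX_L(f)]\ge CL^{-2\alpha+\eps}$,
\[
\frac{|C_m(\cX_L(f))|}{\var[\cX_L(f)]^{m/2}}\le C_m\,L^{-\alpha(m-2)}\,\var[\cX_L(f)]^{-(m-2)/2}\le C_m\,L^{-\eps(m-2)/2}\longrightarrow 0
\]
for every $m\ge 3$, which completes the proof via Marcinkiewicz. This is the Hölder counterpart of the condition $2(d-d_m)<m(d-2k)$ in the smooth case (Theorem~\ref{thm:cltsmooth}), with the variance lower bound playing the role that ensures the denominator does not decay too fast; the passage from compactly supported Hölder functions to the stated class is then routine, as in the approximation argument of Proposition~\ref{l:covasymstatfindiffsobolev}.
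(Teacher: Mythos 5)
Your overall strategy (cumulant method, the reduction formula of Proposition~\ref{l:cumuredu}, H\"older increment bounds of size $L^{-\alpha}$ per factor, the integrability hypothesis~\eqref{eq:intgrconditionforCLTHol}, and Marcinkiewicz) coincides with the paper's, and the crude estimate you derive along the way, $|\bkappa_m(\cX_L(f))|\le C_{m,f}L^{d-\alpha m}$, is exactly the bound the paper proves. The gap is in the step you yourself flag as the main obstacle: the claimed refinement $|\bkappa_m(\cX_L(f))|\le C_m\,\var[\cX_L(f)]\,L^{-\alpha(m-2)}$ is never established, and the mechanism you propose for it does not work. Once you have extracted $m-2$ increments via the H\"older bound you are integrating against $|\kappa_{m-1}|$, not against the signed truncated correlation, and the integral identity~\eqref{eq:integralsoftruncatedcorrelations} only collapses integrals of the \emph{signed} $\rho_k^T$; taking absolute values destroys precisely the cancellation that identity encodes, so the "surviving degree-two structure" does not reduce to the two-point quantity governing the variance. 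As written, your argument for vanishing of \emph{all} cumulants of order $m\ge 3$ therefore rests on an unproved and dubious inequality.

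The fix is that the stronger bound is unnecessary. With the crude estimate and the hypothesis $\var[\cX_L(f)]\ge CL^{-2\alpha+\eps}$ one gets
\[
\frac{|\bkappa_m(\cX_L(f))|}{\var[\cX_L(f)]^{m/2}}\le C'_{m,f}\,L^{d-\alpha m}\,L^{(2\alpha-\eps)m/2}=C'_{m,f}\,L^{\,d-\eps m/2}\longrightarrow 0
\]
for every $m>2d/\eps$. Vanishing of the normalized cumulants for all sufficiently large $m$ already forces asymptotic normality: any subsequential limit then has a log-characteristic function that is a polynomial, and Marcinkiewicz's theorem (as in \cite{janson}, and as used in the proof of Theorem~\ref{thm:cltsmooth}) says such a law is Gaussian. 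This is exactly how the paper concludes. A minor additional point: since $f\in C_0^\alpha$ is already compactly supported, no approximation step analogous to STEP~2 of Proposition~\ref{l:covasymstatfindiffsobolev} is needed here.
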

This is analogous to the main asymptotic normality result of \cite[Theorem 1.2]{NazarovSodin11} which was  stated  for zeros of the Gaussian entire function, and  proved by a method specific to that point process. We use entirely different and general methods, but we cannot yet claim that Theorem~\ref{thm:cltholder} is a generalization of \cite[Theorem 1.2]{NazarovSodin11}, as it remains to verify condition~\eqref{eq:integralsoftruncatedcorrelations0} for zeros of the Gaussian entire function, see Example~\ref{ex:GEFzerostrunccorr} below. The variance lower bound assumption need not hold always (see the remark after Corollary~1.3 in  \cite{NazarovSodin11}).

\subsection{Examples of point processes.}
\label{s:highertruncorr}

We now provide examples to which the central limit theorems \ref{thm:cltsmooth} and \ref{thm:cltholder} apply. Mainly, we shall verify the non-trivial integral identity \eqref{eq:integralsoftruncatedcorrelations0} and the integrability conditions \eqref{eq:intgrconditionforCLTSob}, \eqref{eq:intgrconditionforCLTHol} for certain Determinantal point processes with a projection kernel and partially for  zeros of G.E.F..  Although all point processes in this paper are infinite and even stationary, the cleanest way to understand the integral identity~\eqref{eq:integralsoftruncatedcorrelations0} is in the context of finite point processes and so we begin with this case.

\begin{example}(Finite processes) Let $\cX$ be a point process with a total of $n$  points (here $n$ is fixed, not random).     Adopt the convention in \eqref{eq:defnoftruncatedcorrelations} that the first part $\pi_1$ contains $x_{k}$.
\begin{align}
\int_{\R^d} \rho_k^{T}(x_1,\ldots ,x_k)\md x_{k} &= \sum_{\pi\in \Pi(k)}(-1)^{\ell(\pi)-1}(\ell(\pi)-1)!(n-|\pi_1|+1)\rho_{|\pi_1|-1}(x_{\pi_1\setminus\{k\}})\prod_{j=2}^{\ell(\pi)}\rho_{|\pi_j|}(x_{\pi_j}) \nonumber \\
&= \sum_{\sigma\in \Pi(k-1)}(-1)^{\ell(\sigma)-1}(\ell(\sigma)-1)!\prod_{j=1}^{\ell(\sigma)}\rho_{|\sigma_j|}(x_{\sigma_j})\left\{\sum_{j=1}^{\ell(\sigma)}(n-|\sigma_j|) \ - \ n \ell(\sigma) \right\} \nonumber \\
&= -(k-1)\rho_{k-1}^T(x_1,\ldots ,x_{k-1}).   \no
\end{align}
In the second equality, we used the fact that  $\sigma\in \Pi(k-1)$ can arise from many different $\pi\in \Pi(k)$.  One of them is the one having all the blocks of  $\sigma$ but in addition a singleton block $\{k\}$; in this case $\ell(\pi)=\ell(\sigma)+1$. Another $\ell(\sigma)$ many set-partitions $\pi$ arise by including $k$ in one of the blocks of $\sigma$; in these cases $\ell(\pi)=\ell(\sigma)$.   

As we think that this example conveys an important point, a different and arguably more transparent proof is given in Appendix~\ref{s:app_discrete} for the further special case of point processes with a fixed number of points on a countable space.
\end{example}
Suppose a simple point process $\cX$ can be ``well-approximated" by finite simple point processes $\cX_n$ having $\cX_n(R^d)=n$. By well-approximated, we mean that for any $k\ge 1$ and any $x_1\in \R^d$, the truncated correlation function $\rho_{n,k}^T(x_1,x_2,\ldots ,x_k)$ of $\cX_n$ converges in $L^1((\R^d)^{k-1})$ to the truncated correlation function $\rho_k^T(x_1,\ldots ,x_{k-1})$ of $\cX$.   Then $\cX$ also satisfies Assumption~\ref{asmpt:integrationofrhokT}.

This example suggests that the Assumption~\ref{asmpt:integrationofrhokT}
holds for those point processes that have a ``fixed total number" of points. For infinite processes, number rigidity in the sense of Ghosh and Peres~\cite{Ghosh2017} is a way of making precise such a conservation of numbers. Here we just give two such classes of examples.

\begin{example}(Determinantal process with a projection kernel) Let $\cX$ be a determinantal process with kernel $K$. Then, it is known (see (4.9) in \cite{soshnikovsurvey}) that
\[
\rho_k^T(x_1,\ldots ,x_{k}) =(-1)^{k-1}\sum_{P\in \mathcal S_k^{\tiny cyc}}\prod_{i=1}^k K(x_i,x_{P(i)})
\]
where $\mathcal S_k^{\tiny cyc}$ denotes the set of cyclic permutations of $[k]$. It is not hard to see this by checking that \eqref{eq:defnoftruncatedcorrelations} or \eqref{eq:defnoftruncatedcorrelations2} holds.

To verify Assumption~\ref{asmpt:integrationofrhokT}, we must and do assume that $K$ is a projection kernel, i.e., it is Hermitian ($K(x,y)=\overline{K(y,x)}$) and satisfies the reproducing  property $\int K(x,y)K(y,z)\md y=K(x,z)$. So, integrating the truncated correlation functions for such processes, we obtain that
\begin{align*}
\int \rho_k^T(x_1,\ldots ,x_k)\md x_k &= (-1)^{k-1}\sum_{P\in \mathcal S_k^{\tiny cyc}}\int \prod_{i=1}^k K(x_i,x_{P(i)}) \md x_k.
\end{align*}
For a cyclic $P$, there is some $i,j$ such that $P(i)=k$ and $P(k)=j$. Let $Q\in \mathcal S_{k-1}^{\tiny cyc}$ be defined by $Q(i)=j$ and $Q(i')=P(i')$ for all $i'\in [k-1]\setminus\{i\}$. By the reproducing kernel property 
\begin{align*}
\int \prod_{i=1}^k K(x_i,x_{P(i)}) \md x_k = \prod_{i=1}^{k-1}K(x_i,x_{Q(i)}).    
\end{align*}
Further, each $Q\in \mathcal S_{k-1}^{\tiny cyc}$ arises from $k-1$ different $P\in \mathcal S_{k}^{\tiny cyc}$ (for a given $Q$, we can insert $k$ between $i$ and $Q(i)$ for any $i\in [k-1]$). Therefore, we see that
\begin{align*}
\int \rho_k^T(x_1,\ldots ,x_k)\md x_k &= (-1)^{k-1}(k-1)\sum_{Q\mbox{ \scriptsize{cyclic}}}\prod_{i=1}^{k-1} K(x_i,x_{Q(i)}) \\
&= -(k-1)\rho_{k-1}^T(x_1,\ldots ,x_{k-1})
\end{align*}
Thus Assumption~\ref{asmpt:integrationofrhokT} is satisfied. Further, if $|K(z,w)|$ decays faster than any polynomial in $|z-w|$ as $|z-w| \to \infty$, it follows that the condition~\eqref{eq:intgrconditionforCLTSob} and \eqref{eq:intgrconditionforCLTHol} are satisfied. 

Thus the CLT for Sobolev statistics (i.e., Theorem \ref{thm:cltsmooth}) holds for determinantal processes with a projection kernel that decays fast off the diagonal. If $2k < d$ then the variance grows to infinity and the CLT result is known even without explicit decay condition on the kernel, see \cite{soshnikovclt}. To deduce asymptotic normality for H\"{o}lder statistics as in Theorem \ref{thm:cltholder}, the variance lower bound condition alone needs to be verified.
\end{example}

\begin{example}(Ginibre Ensemble.) For the infinite  Ginibre ensemble, we have $d=2$ and from the derivation in Section \ref{s:examples}, we also have that $d_2 = 2$ with $Q_{2}^{\kappa}(f,f) = \frac{1}{4\pi}\|\nabla f\|_2^2\neq 0$. Thus from Remark \ref{rem:clts}(5), $2(d-d_m) < m(d -2k)$ holds for $k = 1$. From the determinantal structure and from the exponential decay of the kernel $|K(z,w)|^2=e^{-|z-w|^2}$, it follows that Assumption~\ref{asmpt:integrationofrhokT} and the conditions~\eqref{eq:intgrconditionforCLTSob} and \eqref{eq:intgrconditionforCLTHol} are satisfied. Therefore we get asymptotic normality of Sobolev linear statistics of the infinite Ginibre ensemble as in Theorem \ref{thm:cltsmooth} with $k = 1$. 
\end{example}

\begin{example}(Zeros of random analytic function) \label{ex:GEFzerostrunccorr} Refer to \cite{Ben09} for basic definitions and facts about random analytic functions, in particular Gaussian analytic functions (GAFs).  Let $\cX$ be the zero set of a random analytic function $F$ on a region $\Omega\subseteq \mathbb C$. Then $\cX=\frac{1}{2\pi}\Delta_z\log |F|$ and hence, for distinct $z_1,\ldots ,z_{k}$,
\[
\rho_k(z_1,\ldots ,z_k)=\frac{1}{(2\pi)^k}\Delta_{z_1}\ldots \Delta_{z_k}\E[\log|F(z_1)|\ldots \log|F(z_k)|].
\]  
Therefore,
\begin{align}
\rho_k^{T}(z_1,\ldots ,z_k) &=\frac{1}{(2\pi)^k}\sum_{\pi\in \Pi(m)}(-1)^{\ell(\pi)-1}(\ell(\pi)-1)!\prod_{j=1}^{\ell(\pi)}\left(\prod_{i\in \pi(j)}\Delta_{z_i}\right)\E\left[\prod_{i\in \pi(j)}\log|F(z_i)|\right] \nonumber \\
&=\frac{1}{(2\pi)^k}\Delta_{z_1}\ldots \Delta_{z_k}\sum_{\pi\in \Pi(m)}(-1)^{\ell(\pi)-1}(\ell(\pi)-1)!\prod_{j=1}^{\ell(\pi)}\E\left[\prod_{i\in \pi(j)}\log|F(z_i)|\right]  \nonumber \\
&= \frac{1}{(2\pi)^k}\Delta_{z_1}\ldots \Delta_{z_k}\bkappa_k[\log|F(z_1)|,\ldots ,\log|F(z_k)|] \label{eq:truncorrforGAF}
\end{align}
where $\bkappa_k$ is the multi-linear cumulant function of real-valued random variables (recalled in the subsection~\ref{s:multilinearcumulants}). Again, this formula is valid for distinct $z_1,\ldots ,z_k$. Singularities appear when $z_i$ and $z_j$ get close for some $i\not=j$. As the cumulant vanishes when one of the variables is a constant (and for $k=1$ there is the Laplacian which kills constants), we may also rewrite \eqref{eq:truncorrforGAF} as
\begin{align}\label{eq:truncorrforGAF2}
    \rho_k^{T}(z_1,\ldots ,z_k) &=\frac{1}{(2\pi)^k}\Delta_{z_1}\ldots \Delta_{z_k}\bkappa_k[\log|\widetilde{F}(z_1)|,\ldots ,\log|\widetilde{F}(z_k)|]
\end{align}
where $\widetilde{F}(z)=F(z)/\sqrt{\E[|F(z)|^2]}$ is not holomorphic, but normalized to have $\E[|\widetilde{F}(z)|^2]=1$. 

Is Assumption~\ref{asmpt:integrationofrhokT} valid? The integrability of $\rho_k^T(z_1,z_2,\ldots ,z_k)$ on $\C^{k-1}$ for fixed $z_1$ was proved by Nazarov and Sodin~\cite{Nazarov12} (Claims~4.1, 4.2 together with Theorem~1.4 in that paper) for very general Gaussian analytic functions, including the G.E.F. Thus, the integral on the left side of \eqref{eq:integralsoftruncatedcorrelations0} exists. Is the integral equal to the right side? For zeros of G.E.F. and $k=2$, \eqref{eq:kappagaf}  gives 
\begin{align*}
\rho_2^T(z_1,z_2)=-\frac{1}{\pi^2} \;  \sum_{m\ge 1}e^{- m|z_1-z_2|^2} (2-4m|z_1-z_2|^2+m^2|z_1-z_2|^4).
\end{align*}
Fixing $z_1$ and integrating over $z_2$ gives $\frac{-1}{\pi}$ which is $-\rho_1^T(z_1)$ as required. It is worth noting that interchange of integral and sum is not valid, in fact the sum of integrals is zero! 

For general $k\ge 3$, we have not been able to carry out the computations to show  that \eqref{eq:integralsoftruncatedcorrelations0} holds. 
We present some of our investigations in Appendix~\ref{app_GAFcumulants}, in particular a reduced identity \eqref{eq:reducedidentityforRAF} which would imply \eqref{eq:integralsoftruncatedcorrelations0}. For now we leave this as a question.

\smallskip
\noindent{\bf Question}: Is  \eqref{eq:reducedidentityforRAF}  and hence \eqref{eq:integralsoftruncatedcorrelations0} valid for  $k\ge 3$ for the G.E.F.? For  general Gaussian analytic functions? Even for non-Gaussian random analytic functions under mild conditions?

\end{example} 

\begin{example}(Zeros of the G.E.F.) In this case, $d=2$, $d_2= 2k = 4$ with $Q_{4}^{\kappa}(f,f) = C_2 \| \Delta f \|_2^2$. Hence Remark \ref{rem:clts}(5) cannot be used. We now claim that $d_m \ge 2m-2$ (proven in the next paragraph), which then shows that $2(d-d_m) < m(d-d_2)$ for all $m \ge 5$. The integrability conditions \eqref{eq:intgrconditionforCLTSob} and \eqref{eq:intgrconditionforCLTHol} follow from \cite[Theorem 1.4 and Claim 4.1]{Nazarov12} and the integral identity \eqref{eq:integralsoftruncatedcorrelations0} has been conjectured to be true in the previous example. Thus, assuming positive answer to the question above, we have asymptotic normality of Sobolev statistics as in Theorem \ref{thm:cltsmooth} with $k=2$ and $Q_{4}^{\kappa}(f,f) = C_2 \| \Delta f \|_2^2$. Further, we also have asymptotic normality of H\"{o}lder linear statistics as in Theorem \ref{thm:cltholder}, since the necessary variance lower bound has been proven in \cite{NazarovSodin11}. We recall that these two CLTs have been proven via different methods in \cite[Theorem 1.2 and Corollary 1.3]{NazarovSodin11}.

To show that $d_m\ge 2m-2$,  write
\begin{align}
I_m(\tau; \alpha^{(1)},\ldots ,\alpha^{(m)}) &= C\int_{\mathbb C^{m-1}}  \kappa_{m-1}(z_1,\ldots ,z_{m-1}) \prod\limits_{i=1}^m z_{\tau(i)}^{\alpha^{(i)}}\; \md z_1\ldots \md z_{m-1}.   \label{eq:formulaforImforGEF}
\end{align}
Here $\alpha^{(i)}\in \Z_+^2$ for each $i$, and the natural identification of $\mathbb C$ with $\R^2$ is used in writing  $z_{\tau(j)}^{\alpha^{(j)}}$. The formula \eqref{eq:truncorrforGAF2} gives
$$
\kappa_{m-1}(z_1,\ldots ,z_{m-1})=(2\pi)^{-m}\Delta_{z_1}\ldots \Delta_{z_{m-1}}\Delta_{x}\bkappa_m[\log|\widetilde{F}(x+z_1)|,\ldots ,\log|\widetilde{F}(x+z_{m-1})|,\log|\widetilde{F}(x)|]
$$  
for any $x\in \mathbb C$. Plug this into \eqref{eq:formulaforImforGEF} and integrate by parts to transfer  the Laplacians $\Delta_{z_i}$ onto $\prod\limits_{i=1}^m z_{\tau(i)}^{\alpha^{(i)}}=\prod\limits_{k=1}^{m-1}z_k^{\beta^{(k)}}$ where $\beta^{(k)}$ is the sum of $\alpha^{(i)}$ over all $i$ such that $\tau(i)=k$.  If $|\alpha^{(1)}|+\ldots +|\alpha^{(m)}|<2m-2$, then $|\beta^{(k)}|<2$ for some $k\le m-1$, and then $\Delta_{z_{k}}$ annihilates $\prod\limits_{i=1}^m z_{\tau(i)}^{\alpha^{(i)}}$. Thus,  $I_m(\tau; \alpha^{(1)},\ldots ,\alpha^{(m)})=0$ if $|\alpha^{(1)}|+\ldots +|\alpha^{(m)}|<2m-2$. Therefore, $d_m\ge 2m-2$. 
\end{example}

\begin{remark} The idea behind \eqref{eq:truncorrforGAF}  may be  useful in other situations. If we can represent our point process as $\cX=\cL F$ for a real-valued random field $F$ and a linear operator (e.g., a linear differential operator) $\cL$, then under mild conditions allowing interchange of $\cL$ and integrals,
\[
\rho_k^T(x_1,\ldots ,x_k)=\cL_{x_1}\ldots \cL_{x_k} \bkappa_k[F(x_1),\ldots ,F(x_k)].
\]
Here $\bkappa_k[F(x_1),\ldots ,F(x_k)]$ is the multi-linear cumulant of the random variables $F(x_i)$, $i\le k$. 

When can we write a given point process $\cX$ as $\mathcal L F(x)$?  A natural choice for processes on $\R^d$ is to define  $F(x)=\cX(G_d(\cdot,x))=\sum_{y\in \cX}G_d(y,x)$ where $G_d(y,x)$ is the Green's function on $\R^d$ and $\cL=\Delta$, the Laplacian. The catch is that there can be convergence issues for infinite point processes (such a construction  for Poisson processes in $d\ge 5$ was carried out in  \cite{Chatterjee2010}). There will be additional challenges if we want $F$ to be stationary (a natural requirement if $\cX$ is stationary).
\end{remark}

\subsection{Multilinear cumulants of linear statistics of a point process} 
\label{s:multilinearcumulants}
As explained in the introduction of this section, we prove the central limit theorems by deriving bounds for the cumulants of linear statistics of the point process. These cumulants can be expressed as integrals against the  truncated correlation functions. In getting upper bounds on cumulants, it becomes important to take advantage of the cancellation implicit in the signed sum in the definition~\eqref{eq:defnoftruncatedcorrelations}. One key ingredient of our proof is Proposition~\ref{l:cumuredu} that extracts this cancellation efficiently, when the integration condition \eqref{eq:integralsoftruncatedcorrelations0} is satisfied. This form of writing the cumulants of linear statistics  was  found  by  B\'{a}lint Vir\'{a}g and the first author in unpublished discussions.   Some of the other  combinatorial identities for cumulants that we use are more well-known,  but we give a quick derivation of many of them for self-containment.  

First we recall the notion of cumulants of real-valued random variables (for more details on cumulants and in particular their use in normal approximation, see~\cite{Doring22}). The multilinear moments of a random vector $Y=(Y_1,\ldots ,Y_n)$ are defined by
 \begin{align*}
m_n[Y_1,\ldots ,Y_n] = \E[Y_1\ldots Y_n],  \qquad 
 m_{\pi}[Y_1,\ldots ,Y_n] =\prod_{r=1}^{\ell(\pi)}m_{|\pi(r)|}[Y_{\pi(r)}] \mbox{ for }\pi\in \Pi(m).
 \end{align*}
 The (multilinear) cumulants are defined by
 \begin{align}\label{eq:momentcumulantforrvs}
 \bkappa_n[Y_1,\ldots ,Y_n] =\sum_{\pi\in \Pi(m)} (-1)^{\ell(\pi)-1}(\ell(\pi)-1)!m_{\pi}[Y_1,\ldots ,Y_n], \qquad 
 \bkappa_{\pi}[Y] =\prod_{r=1}^{\ell(\pi)}\bkappa_{|\pi(r)|}[Y_{\pi(r)}].
\end{align}
We also write $\bkappa_n(Y_1) = \bkappa_n[Y_1,\ldots,Y_1]$ for the higher cumulants of a single random variable. 
%
\begin{proposition}(Cumulant reduction, \cite{Krishnapur2007})
\label{l:cumuredu}
Assume that $\cX$ is a simple point process on $\R^d$ satisfying integrability condition in \eqref{eq:integralsoftruncatedcorrelations0}.  Let $Y_i=\cX(h_i), i =1,\ldots,m$ for some compactly supported functions $h_i:\R^d\mapsto \R,  $ and $Y = (Y_1,\ldots,Y_m)$ be the corresponding random vector.   Then we have that,  
\begin{equation}
\label{eq:cumulant_symmfns}
\bkappa_m[Y] = \<Q_m[(h_i(x_j))_{1\le i,j\le m}], \rho_m^T\>,  
\end{equation}
where 
\[
Q_m[(y_{i,j})_{1\le i,j\le m}]=\sum_{\tau:[m]\mapsto [m-1]}c_{\tau} \prod_{i=1}^{m}(y_{i,\tau(i)}-y_{i,m})
\]
with $c_{\tau}=(-1)^{m-\ell(\tau)}\frac{(\ell(\tau)-1)! \ (m-\ell(\tau))!}{m! \ (m-1)!}$.
\end{proposition}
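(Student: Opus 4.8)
The plan is to start from the expansion of the joint cumulant in terms of truncated correlation functions, use the integral identity \eqref{eq:integralsoftruncatedcorrelations0} to promote every lower-order contribution to order $m$, and then recognise the result as $\langle Q_m[(h_i(x_j))],\rho_m^T\rangle$ through a combinatorial matching of coefficients. First I would invoke the cumulant--correlation formula \eqref{eq:kappaqmpm} (recalled in Appendix~\ref{s:app_cumulant}), which in set-partition form states that for a simple point process and compactly supported $h_1,\ldots,h_m$,
\begin{equation*}
\bkappa_m[Y]=\sum_{\sigma\in\Pi(m)}\int_{(\R^d)^{\ell(\sigma)}}\prod_{j=1}^{\ell(\sigma)}\Big(\prod_{i\in\sigma_j}h_i\Big)(y_j)\ \rho_{\ell(\sigma)}^T(y_1,\ldots,y_{\ell(\sigma)})\ \md y_1\cdots\md y_{\ell(\sigma)} .
\end{equation*}
The sum over set partitions $\sigma$ encodes the diagonal collisions in $\prod_i\cX(h_i)$: a block $\sigma_j$ groups the functions evaluated at a common point $y_j$, and each summand uses only the off-diagonal truncated correlation of the corresponding order. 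Compact support of the $h_i$ and the integrability behind \eqref{eq:integralsoftruncatedcorrelations0} give absolute convergence and so license the rearrangements below.

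Next I would apply the iterated identity \eqref{eq:integralsoftruncatedcorrelations} (with $k=m$, $r=\ell(\sigma)$) to raise each lower-order factor back to order $m$, namely
\begin{equation*}
\rho_{\ell}^T(y_1,\ldots,y_{\ell})=\frac{(-1)^{m-\ell}(\ell-1)!}{(m-1)!}\int_{(\R^d)^{m-\ell}}\rho_m^T(y_1,\ldots,y_{\ell},w_1,\ldots,w_{m-\ell})\ \md w_1\cdots\md w_{m-\ell} .
\end{equation*}
After this substitution every summand is an integral against $\rho_m^T(x_1,\ldots,x_m)$ in which $\ell(\sigma)$ of the $m$ slots carry the block-products $\prod_{i\in\sigma_j}h_i$ and the remaining $m-\ell(\sigma)$ slots are free, i.e.\ carry the constant $1$. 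As $\rho_m^T$ is symmetric, only the symmetrisation of the accompanying test function matters, so the cumulant is now of the form $\langle\Phi,\rho_m^T\rangle$ for an explicit symmetric $\Phi$ built from the $h_i$.

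It then remains to identify $\Phi$ with the expansion of $Q_m$. Organising the placements by the data ``which functions share an active slot, with $x_m$ distinguished as the pooled free/reference slot'' puts them in bijection with maps $\tau\colon[m]\to[m-1]$, the fibres of $\tau$ being the active blocks and $\ell(\tau)=|\tau([m])|$ their number; correspondingly each factor $h_i(x_{\tau(i)})-h_i(x_m)$ in $Q_m$ records precisely the alternative ``active value minus reference value''. The case $m=2$ already illustrates the mechanism: there $\tau$ is the unique constant map and $c_\tau=-\tfrac12$, the two cross-branches reproduce the principal term $\int h_1(x_1)h_2(x_2)\rho_2^T$, while the reference-branches, after the free slot is integrated out via $\int\rho_2^T(x_1,x_2)\,\md x_2=-\rho_1$, reproduce the diagonal term $\int h_1h_2\,\rho_1$, and together they give $\cov(\cX(h_1),\cX(h_2))$.

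The hard part will be the coefficient identity underlying this last step: one has to show that the weights $\tfrac{(-1)^{m-\ell}(\ell-1)!}{(m-1)!}$ produced by \eqref{eq:integralsoftruncatedcorrelations}, multiplied by the symmetrisation multiplicities and the M\"obius weights $(-1)^{\ell(\sigma)-1}(\ell(\sigma)-1)!$ implicit in the first display, collapse exactly to $c_\tau=(-1)^{m-\ell(\tau)}\tfrac{(\ell(\tau)-1)!\,(m-\ell(\tau))!}{m!\,(m-1)!}$; this is a Stirling/M\"obius computation on the partition lattice. I expect the most transparent route is to run the argument in reverse, first expanding the explicit $Q_m$ and then collapsing each free/reference slot by \eqref{eq:integralsoftruncatedcorrelations0} so as to land back on the set-partition formula of the first step. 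The very shape of $Q_m$ is dictated by number rigidity: for a point process with a deterministic number of points $\cX(1)$ is constant, so for $m\ge2$ the cumulant is unchanged under $h_i\mapsto h_i-t_i$, and \eqref{eq:integralsoftruncatedcorrelations0} is exactly the infinite-volume substitute for this shift invariance that legitimises centring each $h_i$ at the reference point $x_m$.
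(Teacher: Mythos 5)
Your overall route is the same as the paper's: start from the set-partition formula \eqref{eq:kappaqmpm}, use \eqref{eq:integralsoftruncatedcorrelations} to lift every $\rho_{\ell}^T$ to an integral against $\rho_m^T$, and then recognise the resulting symmetric test function as $Q_m[(h_i(x_j))]$. Your setup is correct (including the inversion of \eqref{eq:integralsoftruncatedcorrelations} and the $m=2$ sanity check), and your closing remark about shift invariance under $h_i\mapsto h_i-t_i$ is exactly the right conceptual explanation for why $Q_m$ depends only on the differences $y_{i,\tau(i)}-y_{i,m}$.

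However, there is a genuine gap: the coefficient identity that you defer as ``a Stirling/M\"obius computation'' is the entire content of the proposition, and you neither carry it out nor supply a mechanism that would make it routine. Concretely, after the lifting step one arrives (as the paper does) at
\[
\bkappa_m[Y]=\Big\langle \widetilde{Q}_m[(h_i(x_j))],\rho_m^T\Big\rangle,\qquad
\widetilde{Q}_m[(\zeta_{i,j})]=\frac{1}{m!}\sum_{\tau:[m]\to[m]}\frac{(-1)^{m-\ell(\tau)}}{\binom{m-1}{\ell(\tau)-1}}\prod_{i=1}^m\zeta_{i,\tau(i)},
\]
where the sum runs over \emph{all} maps $\tau:[m]\to[m]$ and the products carry no subtractions; the multiplicity $m(m-1)\cdots(m-\ell+1)$ of maps inducing a given partition must be accounted for here. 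Passing from this to the difference form $\sum_{\tau:[m]\to[m-1]}c_\tau\prod_i(\zeta_{i,\tau(i)}-\zeta_{i,m})$ is \emph{not} a bijective relabelling of placements: expanding each product $\prod_i(\zeta_{i,\tau(i)}-\zeta_{i,m})$ scatters its contribution over many monomials $\prod_i\zeta_{i,\sigma(i)}$, and one must verify that the aggregated coefficients match those of $\widetilde{Q}_m$ for every $\sigma$. The paper's proof does this by establishing the algebraic fact that $\widetilde{Q}_m[(\zeta_{i,j}+t_i)]$ is independent of $(t_1,\ldots,t_m)$ --- shown by computing the coefficient of $t_1$ and exhibiting the cancellation $(-1)^{m-\ell}(\ell-1)!(m-\ell)!\,\ell+(-1)^{m-\ell-1}\ell!(m-\ell-1)!\,(m-\ell)=0$ between the maps with $\tau(1)$ equal to some $\tau(j)$, $j\ge2$, and those with $\tau(1)$ a fresh value --- and then substituting $t_i=-\zeta_{i,m}$, after which the terms with $\tau(i)=m$ vanish and the sum restricts to $\tau:[m]\to[m-1]$. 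You invoke the shift invariance only as motivation for the shape of $Q_m$, not as a proved property of the explicit polynomial with these explicit coefficients; without that verification (or an equivalent direct coefficient computation) the identification $\Phi=Q_m$ remains an unproven assertion.
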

The key point of this proposition is that $Q_m$ is a function of the differences $y_{i,j}-y_{i,m}$. That is what gives the cancellation that we need. In contrast, here is a more familiar form often used (for $h_1=\ldots =h_m$ this is written in \cite[Claim~4.3]{Nazarov12} and also in \cite[Lemma~1]{soshnikovclt} for determinantal processes). We give a short derivation in Appendix \ref{s:app_cumulant}.
\begin{align}\label{eq:kappaqmpm}
\bkappa_m[Y] &= \sum_{\pi\in \Pi(m)}\<\bigotimes_{q=1}^{\ell(\pi)}\prod_{i\in \pi(q)}h_i \ , \  \rho_{\ell(\pi)}^T\> 
\end{align}

\begin{proof}  We start with \eqref{eq:kappaqmpm} and symmetrize over variables to rewrite $\bkappa_m[Y]$ in a more convenient manner.  Observe that if $\tau:[m]\mapsto [m]$ is any function  that  induces the partition $\pi$ (by which we mean that $\tau(i)=\tau(j)$ if and only if $i,j$ belong to the same block of $\pi$), then writing $\ell$ for $\ell(\pi)$ and $\mbox{Ran}(\tau)$  for the range of $\tau$, 
\begin{align*}
\<\bigotimes_{q=1}^{\ell}\prod_{i\in \pi(q)}h_i \ , \  \rho_{\ell}^T\> &=\int_{(\R^d)^{\ell}}\prod_{k=1}^m h_i(x_{\tau(i)}) \ \rho_{\ell}^T(x_{\mbox{\tiny Ran}(\tau)}) \; \md x_{\mbox{\tiny Ran}(\tau)} \\
&=(-1)^{m-\ell}\frac{(\ell-1)!}{(m-1)!}\int_{(\R^d)^{m}}\prod_{i=1}^m h_i(x_{\tau(i)}) \ \rho_{m}^T(x_1,\ldots ,x_m)\md x_1\ldots \md x_m,
\end{align*}
where the last equality follows because of Assumption~\ref{asmpt:integrationofrhokT} (see \eqref{eq:integralsoftruncatedcorrelations}).
 Further, there are $m(m-1)\ldots (m-\ell+1)$ different mappings $\tau$ that give the same partition $\pi$. Consequently, we get (write $\ell(\tau)=\#\mbox{Ran}(\tau)$)
\begin{align}
\bkappa_m[Y] &=\frac{1}{m!}\sum_{\tau:[m]\mapsto [m]}\frac{(-1)^{m-\ell(\tau)}}{\binom{m-1}{\ell(\tau)-1}}\int_{(\R^d)^m}\prod_{i=1}^m h_i(x_{\tau(i)}) \ \rho_{m}^T(x_1,\ldots ,x_m)\md x_1\ldots \md x_m \nonumber \\
&=\<Q_m[(h_i(x_j))_{1\le i,j\le m}], \rho_m^T\>  \label{eq:kappaqmpm1}
\end{align}
where 
\begin{align}
\label{eq:Qmidentity}
Q_m[(\zeta_{i,j})_{1\le i,j\le m}]&=\frac{1}{m!}\sum_{\tau:[m]\mapsto [m]}\frac{(-1)^{m-\ell(\tau)}}{\binom{m-1}{\ell(\tau)-1}}\prod_{i=1}^m\zeta_{i,\tau(i)} 
\end{align}
If we now show that $Q_m$ can be re-written as in the statement of the Proposition, the proof is complete.

Towards the same, we first show that $Q_m[(\zeta_{i,j}+t_i)_{1\le i,j\le m}]$ does not depend on $t_1,\ldots ,t_m$.  By symmetry, it suffices to show the lack of dependence on $t_1$. To this end, set $t_1=t$ and $t_j=0$ for $j\ge 2$ and consider $Q_m[(\zeta_{i,j}+t\delta_{i,1})_{1\le i,j\le m}]$ which is a linear polynomial in $t$. The coefficient of $t$ in this polynomial is
\begin{align*}
 \frac{1}{m!(m-1)!}\sum_{\tau:[m]\mapsto [m]}(-1)^{m-\ell(\tau)}(\ell(\tau)-1)!(m-\ell(\tau))!\prod_{i=2}^n\zeta_{i,\tau(i)}.
\end{align*}
Fix $\tau(2),\ldots ,\tau(m)$, and suppose that $\ell$ of them are distinct. We compute the coefficient of $\prod_{i=2}^n\zeta_{i,\tau(i)}$ in the above expression. 
\begin{enumerate}
\item In the $\ell$ cases when $\tau(1)=\tau(j)$ for some $j\ge 2$, $\ell(\tau)=\ell$. The total contribution to the coefficient from these summands is $(-1)^{m-\ell}(\ell-1)!(m-\ell)!\times \ell$.
\item In the $m-\ell$ cases when $\tau(1)\not=\tau(j)$ for any $j\ge 2$, we have $\ell(\tau)=\ell+1$. The total contribution to the coefficient from these summands is $(-1)^{m-\ell-1}\ell!(m-\ell-1)!\times (m-\ell)$.
\end{enumerate}
Adding the two, we get zero and hence the derivative of $Q_m[(\zeta_{i,j}+t\delta_{i,1})_{1\le i,j\le m}]$ with respect to $t$ is identically zero. This shows that $Q_m[(\zeta_{i,j}+t\delta_{i,1})_{1\le i,j\le m}]$ does not depend on $t$ and $Q_m[(\zeta_{i,j}+t_i)_{1\le i,j\le m}]$ does not depend on $t_1,\ldots ,t_m$ as claimed.

Taking $t_i=-\zeta_{i,m}$, we see that $Q_m[(\zeta_{i,j})_{1\le i,j\le m}] =Q_m[(\zeta_{i,j}-\zeta_{i,m})_{1\le i,j\le m}]$, and using the expression~\eqref{eq:Qmidentity} for the latter gives the expression in the statement of the proposition. Also observe that if $\tau(i) = m$ for some $i \in [m]$, then the corresponding summand vanishes and hence we have restricted to only $\tau : [m] \mapsto [m-1]$.
\end{proof}

\subsection{Proofs of Central Limit theorems}
\label{s:proofs_clt} 
We prove the two central limit theorems here. In structure, the proof of the first theorem is similar to that of Proposition~\ref{l:covasymstatfindiffsobolev}: We first prove for compactly supported smooth functions and then extend it to functions with less smoothness.

\begin{proof}[Proof of Theorem \ref{thm:cltsmooth}]

\noindent \textsc{\underline{STEP 1:} Proof for compactly supported smooth $f$. \\}
We shall first derive bounds for multilinear cumulants for any compactly supported smooth functions $h_i:\R^d\mapsto \R,  i = 1, \ldots,m$ and then reduce it in the case of $h_i = f,  i = 1,\ldots,m$.  

From Proposition \ref{l:cumuredu}, $\bkappa_m[\cX(h_1),\ldots ,\cX(h_m)]$ is a linear combination of finitely many terms of the form 
$$ \int \prod_{i=1}^{m}(h_i(x_{\tau(i)})-h_i(x_m)) \ \rho_m^T(x_1,\ldots ,x_m) \ \md x_1\ldots \md x_m,$$
for some $\tau:[m]\mapsto [m-1]$. For simplicity, let us first assume that $h_i$ are real analytic and later we will argue for compactly supported smooth functions. 

Then for all $\alpha^{(1)},\ldots,\alpha^{(m)} \in \Z_+^d$, $\int \prod_{i=1}^{m-1}|z_i|^{\alpha^{(i)}}|\kappa_{m-1}|(\md z) < \infty$ and so,
\begin{align*}
&\int \prod_{i=1}^{m}(h_i(x_{\tau(i)})-h_i(x_m)) \ \rho_m^T(x_1,\ldots ,x_m) \ \md x_1\ldots \md x_m \\
&= \int \prod_{i=1}^{m}(h_i(x+z_{\tau(i)})-h_i(x)) \ \kappa_{m-1}(z_1,\ldots ,z_{m-1}) \ \md z_1\ldots \md z_{m-1} \ \md x \\
&= \int \prod_{i=1}^{m}\left(\sum_{\alpha\not=0}\frac{1}{\alpha!}\partial^{\alpha}h_i(x) z_{\tau(i)}^{\alpha}\right) \ \kappa_{m-1}(z_1,\ldots ,z_{m-1}) \ \md z_1\ldots \md z_{m-1}  \ \md x \\
&=\sum_{\alpha^{(1)},\ldots ,\alpha^{(m)}\not=0} I_m(\tau ; \alpha^{(1)},\ldots ,\alpha^{(m)})\int_{\R^d} \prod_{i=1}^m\partial^{\alpha^{(i)}}h_i(x) \; \md x.
\end{align*}
Applying this to $h_{i,L}(x)=h_i(x/L)$ to get
\begin{align*}
&\bkappa_m[\cX_L(h_{1}),\ldots ,\cX_L(h_{m})] =\bkappa_m[\cX(h_{1,L}),\ldots ,\cX(h_{m,L})] 
\\ &=\sum_{\tau:[m]\to [m-1]}c_{\tau} \sum_{\alpha^{(1)},\ldots ,\alpha^{(m)}\not=0} I_m(\tau ; \alpha^{(1)},\ldots ,\alpha^{(m)})L^{d-|\alpha^{(1)}|-\ldots -|\alpha^{(m)}|}\int_{\R^d} \prod_{i=1}^m\partial^{\alpha^{(i)}}h_i.
\end{align*}
By definition of $d_m$,  we see that $\bkappa_m[h_{1,L},\ldots ,h_{m,L}]=O(L^{d-d_m})$ and putting $h_i = f$ for all $i$, we obtain $\bkappa_m(\cX(f_L)) = O(L^{d-d_m})$. Thus 
\begin{align}\label{eq:cumulantbdfromsecondone}
\bkappa_m\Big( \frac{\cX(f_L) - \E[\cX(f_L)]}{\sqrt{L^{d-2k}}} \Big) &= \frac{\bkappa_m[\cX(f_L)]}{L^{m(d-2k)/2}} \\
&= O(L^{(d-d_m)-\frac12 m(d-2k)}).
\end{align}
By our assumption that $2(d-d_m)<m(d-2k)$  for all large  $m$, it follows that all large cumulants vanish in the limit.   Thus, Marcinkiewicz's theorem (see \cite{janson}) gives the CLT. 

Now we fix the proof for compactly supported smooth functions $h_i$ and more importantly only assuming integrability of $|z|^{\gamma}\kappa_{m-1}(z)$ for $|\gamma|\le l+1$ with $l =d_m-1$. In this case, we fix $\tau : [m] \mapsto [m-1]$ as before and define $H_x(z)= \prod_{i=1}^{m}(h_i(x+z_{\tau(i)})-h_i(x))$. The Taylor expansion of $H_x:(\R^d)^{m-1}\to \R$ around $0$ is 
\begin{align}\label{eq:tayloexpansionHx}
H_x(z_1,\ldots ,z_{m-1}) = \sum_{1\le |\theta|\le l}\frac{(-1)^{|\theta|}}{\theta!}\partial^{\theta}H_x(0) \prod_{j=1}^{m-1}z_{j}^{\theta^{(j)}} \ + \ \sum_{|\theta|=l+1}\frac{(-1)^{|\theta|}|\theta|}{\theta!}R_{\theta}(x,z)\prod_{j=1}^{m-1}z_{j}^{\theta^{(j)}}.
\end{align}
Here $\theta\in (\Z_+^d)^{m-1}$ is written as $\theta=(\theta^{(1)},\ldots ,\theta^{(m-1)})$ with $\theta^{(j)}\in \Z_+^d$, and the self-explanatory meaning $|\theta|=|\theta^{(1)}|+\ldots +|\theta^{(m-1)}|$, $\theta!=\theta^{(1)}!\ldots \theta^{(m-1)}!$ and $\partial^{\theta}=\partial_{z_1}^{\theta^{(1)}}\ldots \partial_{z_{m-1}}^{\theta^{(m-1)}}$. The smoothness of $h$ gives that $|R_{\theta}(x,z)| \leq C_{\theta},$ a finite constant.

Thus, for a given $\theta\in (\Z_+^d)^{m-1}$, it is easy to see that $\partial^{\theta}H_x(0)$ is a linear combination of terms of the form $\prod_{i=1}^m\partial^{\alpha^{(i)}}h_i(x)$, where $\alpha\in (\Z_+^d)^m$ satisfies $|\alpha|=|\theta|$. Therefore, the integral of the first summand in \eqref{eq:tayloexpansionHx} with respect to $\kappa_{m-1}(z_1,\ldots,z_{m-1}) \, \md z_1 \ldots \md z_{m-1}$ is a sum of terms of the form 
\[
I_m(\tau ; \alpha^{(1)},\ldots ,\alpha^{(m)}) \int \prod_{i=1}^m\partial^{\alpha^{(i)}}h_i(x) \md x
\]
over $\alpha\in (\Z_+^d)^m$ satisfying $|\alpha|\le l$.
Since $l = d_m-1$, for all $|\alpha| < l$, $I_m(\tau ; \alpha^{(1)},\ldots ,\alpha^{(m)}) = 0$  and so we are only left with the second term in \eqref{eq:tayloexpansionHx}. Now put $h_{i,L}$ in \eqref{eq:tayloexpansionHx} and correspondingly $H_{L,x}$ to obtain by a change of variables that
\begin{align*}
& |\int H_{L,x}(z_1,\ldots ,z_{m-1}) \kappa_{m-1}(z_1,\ldots,z_{m-1}) \md z_1 \ldots \md z_{m-1} \md x | \\
& \leq  \sum_{|\theta|=d_m} L^{d-|\theta^{(1)}|-\ldots-|\theta^{(m)}|} \frac{(-1)^{|\theta|}|\theta|}{\theta!} \int |R_{\theta}(x,\frac{z}{L})|\prod_{j=1}^{m}|z_{\tau(j)}|^{\theta^{(j)}} |\kappa_{m-1}(z_1,\ldots,z_{m-1})| \md z_1 \ldots \md z_{m-1} \md x \\
& = O(L^{d-d_m}),
\end{align*}
where for the last bound we have used the boundedness of $R_{\theta}$ and the integrability assumption on $\kappa_{m-1}$. Thus, we derive as in \eqref{eq:cumulantbdfromsecondone} that 
$$\bkappa_m\Big( \frac{\cX(f_L) - \E[\cX(f_L)]}{\sqrt{L^{d-2k}}} \Big)=O(L^{(d-d_m)-\frac12 m(d-2k)}),$$
and thus the required central limit theorem follows. 

\medskip

\noindent \textsc{\underline{STEP 2:} Proof for $f \in L^1 \cap H^k$.} \\
As in the second step of the proof of Proposition~\ref{l:covasymstatfindiffsobolev}, we approximate $f\in L^1\cap H^k$ by compactly supported smooth functions. More precisely, for any $\varepsilon>0$, we can write  $f=g+h$ with $g\in C_c^{\infty}(\R^d)$ and $Q_{2k}(h,h)<\varepsilon$. Then we also have that $|Q_{2k}(f,f)-Q_{2k}(g,g)|\le C\sqrt{\varepsilon}$, by triangle and Cauchy-Schwarz inequality. For any $L>0$, 
\begin{align*}
\frac{\cX(f_L) - \E[\cX(f_L)]}{\sqrt{L^{d-2k}}}=\frac{\cX(g_L) - \E[\cX(g_L)]}{\sqrt{L^{d-2k}}}+\frac{\cX(h_L)-\E[\cX(h_L)]}{\sqrt{L^{d-2k}}}.
\end{align*}
The second summand has zero mean and its variance converges to $Q_{2k}(h,h)<\varepsilon$. The first summand converges in distribution to $N(0,Q_{2k}(g,g))$. This shows that all subsequential limits of  $\frac{\cX(f_L) - \E[\cX(f_L)]}{\sqrt{L^{d-2k}}}$ are within $O(\sqrt{\eps})$ of $N(0,Q_{2k}(f,f))$ in the Kantorovich distance $W_2$. As $\varepsilon \to 0$, it follows that $\frac{\cX(f_L) - \E[\cX(f_L)]}{\sqrt{L^{d-2k}}}$ converges in distribution to $N(0,Q_{2k}(f,f))$.
\end{proof}

Next we prove the CLT for H\"{o}lder statistics.
\begin{proof}[Proof of Theorem \ref{thm:cltholder}]
From Proposition \ref{l:cumuredu} and stationarity, $\bkappa_m(\cX_L(f))$ is a sum of finitely many terms of the kind (for some $\tau : [m] \mapsto [m-1]$)
\begin{align}
& \Big| \int \prod_{i=1}^{m}(f_{L}(x+z_{\tau(i)})-f_{L}(x)) \ \kappa_{m-1}(z_1,\ldots ,z_{m-1}) \ \md z_1\ldots \md z_{m-1} \ \md x \Big| \nonumber \\
\label{e:kminthol} & \le C^m L^{-\alpha m} \int_{L.B}\int_{(L.S)^{m-1}}\prod_{i=1}^{m-1}|z_i|^{\alpha\ell_i} \ |\kappa_{m-1}|(z_1,\ldots ,z_{m-1}) \ \md x
\end{align}
where $S=B-B$  where $B$ is the support of $f$ and $\ell_i=\#\{k \; : \; \tau(k)=i\}$ and $C$ is the $\alpha$-H\"{o}lder norm of $f$. Since $\sum l_i = m$, the integrability assumption \eqref{eq:intgrconditionforCLTHol} grants finiteness of the above integrals in \eqref{e:kminthol} and so we bound the inner integral in \eqref{e:kminthol} by
\[
\int \prod_{i=1}^{m-1}|z_i|^{\alpha\ell_i} \ |\kappa_{m-1}|(z_1,\ldots ,z_{m-1}) \ \md z_1 \ldots \md z_{m-1}. 
\]
Denoting the supremum over all choices of $l_i$ in the above integral by $C_m$, we can then bound
\eqref{e:kminthol} by
\[
C_m\times C^{m}L^{-\alpha m}\times L^d|B| = C_{m,f}'L^{-\alpha m+d}
\]
where $C_{m,f}'$ depends on $f$ and $m$, but not on $L$. As we have assumed that $\bkappa_2(\cX_L(f)) \geq CL^{-2\alpha + \epsilon}$, it follows that for large $m$, $\bkappa_m(\cX_L(f)) = o(\bkappa_2(\cX_L(f))^{m/2})$  as $L\to \infty$. 
\end{proof}

\appendix
\section{Appendix: Proof of the cumulant formula~\eqref{eq:kappaqmpm1}}
\label{s:app_cumulant}
Using the formula for multilinear moments and the Campbell-Mecke formula, we have
\begin{align*}
\E [Y_1 \ldots Y_m] & = \E \left[\sum_{x_1,\ldots ,x_m\in \cX}h_1(x_1)\ldots h_m(x_m) \right] \; = \; \sum_{\pi\in \Pi(m)} \langle \bigotimes_{r=1}^{\ell(\pi)}\prod_{i\in \pi(r)}h_i \ , \ \rho_{\ell(\pi)} \rangle
\end{align*}
by summing  according to the induced partition of $[m]$ in which $i$ and $j$ are in the same block if and only if $x_i=x_j$. To be clear, the term corresponding to $\pi$ with $\ell(\pi)=\ell$ is
\begin{align*}
    \int_{(\R^d)^{\ell}} \rho_{\ell}(y_1,\ldots ,y_{\ell})   \prod_{r=1}^{\ell}\prod_{i\in \pi(r)}h_i(y_r)  \ \md y_1\ldots \md y_{\ell}.
\end{align*}
For $\pi,\sigma\in \Pi(m)$, we write $\pi\leq \sigma$ if  $\pi$ refines $\sigma$, i.e., each block of $\pi$ is contained in a block of $\sigma$. In such a case, the restriction $\pi\vert_{\sigma(r)}$ is a set partition of $\sigma(r)$, for each $r\le \ell(\sigma)$.
\begin{align*}
\bkappa_m[Y] &= \sum_{\sigma\in \Pi(m)}(-1)^{\ell(\sigma)-1}(\ell(\sigma)-1)!\prod_{r=1}^{\ell(\sigma)}\sum_{\pi_r\in \Pi(\sigma_r)}\<\bigotimes_{s=1}^{\ell(\pi_r)}\prod_{i\in \pi_r(s)}h_i \ , \ \rho_{\ell(\pi_r)}\> \\
&= \sum_{\sigma\in \Pi(m)}(-1)^{\ell(\sigma)-1}(\ell(\sigma)-1)!\sum_{\pi\in \Pi(m): \pi\le \sigma}\prod_{r=1}^{\ell(\sigma)}\<\bigotimes_{s=1}^{\ell(\pi\vert_{\sigma(r)})}\prod_{i\in \pi\vert_{\sigma(r)}}h_i \ , \ \rho_{\ell(\pi\vert_{\sigma(r)})}\> 
\end{align*}
\begin{align*}
&= \sum_{\sigma\in \Pi(m)}(-1)^{\ell(\sigma)-1}(\ell(\sigma)-1)!\sum_{\pi\in \Pi(m): \pi\le \sigma}\<\bigotimes_{q=1}^{\ell(\pi)}\prod_{i\in \pi(q)}h_i \ , \ \bigotimes_{r=1}^{\ell(\sigma)}\rho_{\ell(\pi\vert_{\sigma(r)})}\> \\
&= \sum_{\pi\in \Pi(m)}\<\bigotimes_{q=1}^{\ell(\pi)}\prod_{i\in \pi(q)}h_i \ , \  \sum_{\sigma\in \Pi(m):\sigma\ge \pi}(-1)^{\ell(\sigma)-1}(\ell(\sigma)-1)! \bigotimes_{r=1}^{\ell(\sigma)}\rho_{\ell(\pi\vert_{\sigma(r)})} \> \\
&= \sum_{\pi\in \Pi(m)}\<\bigotimes_{q=1}^{\ell(\pi)}\prod_{i\in \pi(q)}h_i \ , \  \rho_{\ell(\pi)}^T\>.
\end{align*}
When $h_i=f$  for all $i$,  by the symmetry of $\rho_{\ell}^T$,  this expression specializes to 
\begin{align}\label{eq:cumulkantoflinstat}
\bkappa_m[Y] &= \sum_{\pi\in \Pi(m)} \<f^{|\pi_1|}\otimes \ldots \otimes f^{|\pi_{\ell}|},\rho_{\ell}^T\>
\end{align}
in which form it occurs in the  literature, for example in Claim~4.3 of \cite{Nazarov12}.

\section{Appendix: Simple point processes on discrete spaces}
\label{s:app_discrete}
Let $E$ be a countable set and let $\cX$ be a simple point process with a fixed $N$ number of points, i.e., $\cX(E)=N$.  All information about the point process is also encoded in the collection of Bernoulli random variables $(V_x)_{x\in E}$, where  $V_x=\cX(\{x\})$. The correlation functions are defined as 
\[
\rho_k(x_1,\ldots ,x_k)=\begin{cases} \E[V_{x_1}\ldots V_{x_k}] &\mbox{ if }x_i\mbox{ are distinct,} \\ 0 & \mbox{ if not}.
\end{cases}
\]
We use this in \eqref{eq:defnoftruncatedcorrelations} to define $\rho_k^T(x_1,\ldots ,x_k)$ for any $x_1,\ldots ,x_k\in E$, not necessarily distinct. When $x_1,\ldots ,x_k$ are distinct, from the definition of $\rho_k$ and relationship between moments and cumulants~\eqref{eq:momentcumulantforrvs}, it immediately follows that
\begin{align}\label{eq:rhokTfordistinct}
\rho_k^T(x_1,\ldots ,x_k)=\bkappa_k[V_{x_1},\ldots ,V_{x_k}].
\end{align}
What happens when $x_i$ are not distinct? The general case is outlined in Remark~\ref{rem:cumulantgeneralform} below. Here we restrict to the special case when $x_1,\ldots ,x_{k-1}$ are distinct and $x_k=x_j$ for some $j<k$.  We claim that 
\begin{align}\label{eq:rhokTfornondistinct}
\rho_k^T(x_1,\ldots ,x_{k-1},x_{j})=\bkappa_k[V_{x_1},\ldots ,V_{x_{k-1}},V_{x_j}]-\bkappa_{k-1}[V_{x_1},\ldots ,V_{x_{k-1}}]
\end{align}
To see this, take $j=1$ without loss of generality. Observe that in \eqref{eq:defnoftruncatedcorrelations}, the summands corresponding to $\pi$ where $1$ and $k$ belong to the same block, vanish. Therefore,
\begin{align*}
\rho_k^T(x_1,\ldots ,x_k) &= \bkappa_k[V_{x_1},\ldots ,V_{x_k}] - \sum_{\pi\in \Pi(k)}^{*}(-1)^{\ell(\pi)-1}(\ell(\pi)-1)!\prod_{j=1}^{\ell(\pi)}\E\left[\prod_{i\in \pi_j}V_{x_i}\right].
\end{align*}
where the second sum is over $\pi$ in which $1$ and $k$ belong to the same block. As $V_{x_1}V_{x_k}=V_{x_1}$, the second sum remains the same if we drop $V_{x_k}$, which shows that it is just the same as $\bkappa_{k-1}[V_{x_1},\ldots ,V_{x_{k-1}}]$. This proves \eqref{eq:rhokTfornondistinct}. \hfill \qed

This computation also illustrates how Assumption~\ref{asmpt:integrationofrhokT} holds. Indeed, for distinct $x_1,\ldots ,x_{k-1}$, using \eqref{eq:rhokTfordistinct} and \eqref{eq:rhokTfornondistinct}, we get
\begin{align*}
\sum_{u}\rho_k^T(x_1,\ldots ,x_{k-1},u) &= \sum_{u}\bkappa_k[V_{x_1},\ldots ,V_{x_{k-1}},V_{u}] - \sum_{j=1}^{k-1}\bkappa_{k-1}[V_{x_1},\ldots ,V_{x_{k-1}}] \\
&=\bkappa_k[V_{x_1},\ldots ,V_{x_{k-1}},\sum_{u}V_{u}] - (k-1)\bkappa_{k-1}[V_{x_1},\ldots ,V_{x_{k-1}}]
\end{align*}
using the multilinearity of $\bkappa_k$. As $\sum_{u}V_u=N$, a constant, the first term vanishes. And the second term is just $-(k-1)\rho_{k-1}^T(x_1,\ldots ,x_{k-1})$, using \eqref{eq:rhokTfordistinct} again. \hfill \qed

\begin{remark}\label{rem:cumulantgeneralform} Using inclusion-exclusion principle, it may be checked that for any $x_1,\ldots ,x_k$,
\begin{align*}
    \rho_k^{T}(x_1,\ldots ,x_k)=\sum_{1\le j_1<\ldots <j_r\le k}^*(-1)^{k-r}\kappa_r[V_{x_{j_1}},\ldots ,V_{x_{j_r}}]
\end{align*}
where the sum is over all $1\le j_1<\ldots <j_r\le k$ such that $\{x_{j_1},\ldots ,x_{j_r}\}=\{x_1,\ldots ,x_k\}$ (i.e., all the distinct $x_i$s must occur among $x_{j_1},\ldots ,x_{j_r}$).
\end{remark}

\section{Appendix: On truncated correlations for zeros of random analytic functions}\label{app_GAFcumulants}
Let $F$ be a random analytic function on a domain $\Omega\subseteq \mathbb C$ and let $\cX$ be the point process of its zero set. Some of the manipulations below (including that $\cX$ is a simple point process a.s.) requires some mild conditions on $F$, but they are all satisfied for Gaussian analytic functions that have no deterministic zeros and sufficient non-degeneracy (in the sense of Nazarov and Sodin~\cite{Nazarov12}). From Green's theorem, $\cX=\frac{1}{2\pi}\Delta_z\log|F(z)|$ in the distributional sense, and this is the source of the formulas \eqref{eq:truncorrforGAF} and \eqref{eq:truncorrforGAF2} for the truncated correlations of $\cX$. 

In an attempt to prove \eqref{eq:integralsoftruncatedcorrelations0}, we  rephrase it as follows (compare this with the proof in Appendix~\ref{s:app_discrete} for finite processes on discrete spaces). 

 Fix distinct $z_1,\ldots ,z_{k-1}$ and write $h(z_k)=\Delta_{z_1}\ldots \Delta_{z_{k-1}} \bkappa_k[\log|\widetilde{F}(z_1)|,\ldots ,\log|\widetilde{F}(z_k)|]$. 
 With  $\Omega_{\eps}=\mathbb C\setminus \cup_{i=1}^{k-1}\mathbb D(z_i,\eps)$ and using \eqref{eq:truncorrforGAF2} we derive that
\begin{align*}
(2\pi)^k\int \rho_k^T(z_1,\ldots ,z_{k-1},z_{k})\md z_k &= \lim_{\eps\downarrow 0}\int_{\Omega_{\eps}} \Delta_z h(z) \md z \\
&= \lim_{\eps\downarrow 0}\sum_{j=1}^{k-1}\int_{0}^{2\pi} \frac{\partial}{\partial \eps}h(z_j+\eps e^{i\theta}) \  \eps \md\theta 
\end{align*}
 by Green's theorem (assuming fast decay at infinity or at the boundary of $\Omega$). Thus, using the symmetry of the cumulant $\bkappa_k$ in its arguments, \eqref{eq:integralsoftruncatedcorrelations0} will follow if we show that for each $k$ and distinct $z_1,\ldots ,z_{k-1}$,
\begin{align}
& \lim_{\eps\downarrow 0}\int_{0}^{2\pi}  \eps \, \frac{\partial}{\partial \eps} \Delta_{z_1}\ldots \Delta_{z_{k-1}} \bkappa_k[\log|\widetilde{F}(z_1)|,\ldots ,\log|\widetilde{F}(z_{k-1})|,\log|F(z_{1}+\eps e^{i\theta})|] \, \frac{\md \theta}{2\pi} \nonumber \\ 
 \label{eq:reducedidentityforRAF} & = -\Delta_{z_1}\ldots \Delta_{z_{k-1}}\bkappa_{k-1}[\log|\widetilde{F}(z_1)|,\ldots ,\log|\widetilde{F}(z_{k-1})|].
\end{align}
The advantage of  \eqref{eq:reducedidentityforRAF} over \eqref{eq:integralsoftruncatedcorrelations0} is that here the contributions of the integrals are localized around the points $z_1,\ldots ,z_{k-1}$. It is also  similar to the derivation in Appendix~\ref{s:app_discrete}. 

Nevertheless, we have not been able to prove \eqref{eq:reducedidentityforRAF} for G.E.F., let alone more general Gaussian analytic functions. In the main text (Example~\ref{ex:GEFzerostrunccorr}) we sketched a proof for $k=2$ for G.E.F. Here  we present a proof for $k=2$ for general Gaussian analytic functions.
\begin{proof}[Proof of \eqref{eq:reducedidentityforRAF} for $k=2$ for Gaussian analytic functions] Let $K(z,w)=\E[F(z)\overline{F(w)}]$ be the covariance and let $\theta(z,w)=\E[\widetilde{F}(z)\overline{\widetilde{F}(w)}]=\frac{K(z,w)}{\sqrt{K(z,z)}\sqrt{K(w,w)}}$. The assumption of no deterministic zeros implies $K(z,z) \neq 0$ for all $z$ and hence $\theta$ is well-defined and smooth. 

Now (see Section~3.5 in \cite{Ben09}, in particular, Lemma~3.5.2), we can write
\begin{align*}
    \bkappa_2[\log|\widetilde{F}(z)|,\log|\widetilde{F}(w)|]&=\frac14 \Li(|\theta(z,w)|^2)
\end{align*}
where $\Li(u)=\sum_{p\ge 1}\frac{u^p}{p^2}$ is the dilogarithm function. Observe that $|\theta(z,w)|^2\to 1$ as $w\to z$. Therefore, to write a perturbative expression, we rewrite the well-known reflection identity (as the two sides are equal for $u=0$, it suffices to show that the derivatives are equal, and that follows from  $\Li'(u)=-\frac{1}{u}\log(1-u)$)
\begin{align*}
    \Li(u)+\Li(1-u)=\frac{\pi^2}{6}-(\log u)\log(1-u).
\end{align*}
as 
\begin{align*}
    \Li(u)=\frac{\pi^2}{6}+\sum_{p=1}^{\infty}\frac{(1-u)^p}{p^2}(p\log(1-u)-1)
\end{align*}
to write
\begin{align*}
&    \bkappa_2[\log|\widetilde{F}(z)|,\log|\widetilde{F}(w)|] \\
&=\frac{\pi^2}{24}+\frac14(1-|\theta(z,w)|^2)(\log(1-|\theta(z,w)|^2)-1)  + O((1-|\theta(z,w)|^2)^2\log(1-|\theta(z,w)|^2)).
\end{align*}
Write the power series expansion $K(w+\xi,w+\zeta)=\sum_{p,q\ge 0}a_{p,q}\xi^{p}\overline{\zeta}^{q}$. A little work shows that
\begin{align*}
1-|\theta(z,w)|^2 &= -\frac{a_{1,1}a_{0,0}-a_{1,0}a_{0,1}}{a_{0,0}^2}|z-w|^2+ o(|z-w|^2) \mbox{ as }z\to w \\
&= -\pi \rho_1(w)|z-w|^2+\ldots
\end{align*}
We used the  the Kac-Rice formula (see page~26 of \cite{Ben09}) in arriving at the last line.  In addition note that $a_{0,0} = K(w,w) > 0$ and hence $\rho_1(w)$ is a smooth function of $w$. Therefore, as $z\to w$,
\begin{align*}
\bkappa_2[\log|\widetilde{F}(z)|,\log|\widetilde{F}(w)|] &= \frac{\pi^2}{24}-\frac{\pi}{4} \rho_1(w)|z-w|^2\log|z-w|^2+[\mbox{lower order terms}]
\end{align*}
A direct calculation shows that
\begin{align*}
    \Delta_w (|z-w|^2\log|z-w|^2)&= 4(2+\log|z-w|^2)
\end{align*}
When we take $z=w+\eps e^{i\theta}$ and differentiate with respect to $\eps$, this becomes $\frac{8}{\eps}$. 

What about the other terms in $\bkappa_2[\log|\widetilde{F}(z)|,\log|\widetilde{F}(w)|]$? The lower order terms are not simply smaller in magnitude, they are terms involving higher powers of $|z-w|^2$, hence if any of the differentiation operations is  applied to them, the overall contribution is $o(1/\eps)$. The same holds if any of the derivatives is applied to  the $\rho_1(w)$ factor (which is smooth as noted above).  Consequently, 
\begin{align*}
    \frac{\partial}{\partial \eps} \Delta_{w}\bkappa_2[\log|\widetilde{F}(w)|,\log|\widetilde{F}(w+\eps e^{i\theta})|]=-\frac{2\pi}{\eps}\rho_1(w)+o(1/\eps).
\end{align*}
Multiplying by $\eps$ and integrating w.r.t. $d\theta/2\pi$  gives $-2\pi\rho_1(w)$, As $\rho_1(w)=\frac{1}{2\pi} \bkappa_1[\log|\widetilde{F}(w)|]$,  we arrive at  \eqref{eq:reducedidentityforRAF} for $k=2$.
\end{proof}

\section*{Acknowledgements}
DY's research was partially supported by SERB-MATRICS Grant MTR/2020/000470 and CPDA from the Indian Statistical Institute. M.K.'s research was partly supported by the DST-FIST program 2021[TPN-70661]. The authors are also thankful to M. A. Klatt and R. Lachi\`eze-Rey for comments on a preliminary version.

\end{document}